\newcommand*\circled[1]{\tikz[baseline=(char.base)]{
            \node[shape=circle,draw,inner sep=2pt] (char) {#1};}}
\newtheoremstyle{mytheoremstyle} 
    {0.6cm}                    
    {0.6cm}                    
    {\itshape}                   
    {}                           
    {\bfseries}                   
    {.}                          
    {.5em}                       
    {}  
\newtheoremstyle{mytheoremstarstyle} 
    {0.6cm}                    
    {0.6cm}                    
    {\itshape}                   
    {}                           
    {\bfseries}                   
    {.}                          
    {.5em}                       
    {\textbf{\thmname{#1}\thmnote{ (#3)}}}  
\newtheoremstyle{mydefinitionstyle} 
    {0.6cm}                    
    {0.6cm}                    
    {\normalfont}                   
    {}                           
    {\bfseries}                   
    {.}                          
    {.5em}                       
    {}  
\newtheoremstyle{myremarkstyle} 
    {0.6cm}                    
    {0.6cm}                    
    {\normalfont}                   
    {}                           
    {\itshape}                   
    {.}                          
    {.5em}                       
    {}  
\theoremstyle{mytheoremstarstyle}
\newtheorem{theorem*}{Theorem}[section]
\theoremstyle{mytheoremstyle}
\newtheorem{theorem}{Theorem}[section]
\newtheorem{lemma}[theorem]{Lemma}
\newtheorem{proposition}[theorem]{Proposition}
\newtheorem{assumption}[theorem]{Assumption}
\theoremstyle{mydefinitionstyle}\newtheorem{definition}[theorem]{Definition}
\theoremstyle{myremarkstyle}\newtheorem{remark}[theorem]{Remark}
\def\Hom{\mathrm{Hom}}
\def\bbZ{\mathbb{Z}}\def\bbR{\mathbb{R}}\def\bbQ{\mathbb{Q}}
\def\colim{\mathop{\rm colim}}
\begin{document}

\title{Products in Generalized Differential Cohomology}
\author{Markus Upmeier}
\date{\today}
\maketitle

\begin{abstract}
It is shown in this paper that multiplicative cohomology theories $E$ that are rationally even -- a technical condition that is often satisfied --
the Hopkins-Singer construction of generalized differential cohomology $\hat{E}$ has a 
unital, graded commutative multiplicative structure. To this end, a more explicit integration and a differential cohomology theory for pairs are also developed.
\end{abstract}


\section{Introduction}
The idea of differential cohomology theory is to combine geometric and cohomological information of manifolds. The many historical examples are embraced by the following description by \cite{bunke_schick_uniqueness}:
Let $E$ be a generalized cohomology theory and let $V^*=E^*(pt)\otimes_{\bbZ} \bbR$ denote
the graded coefficient vector space. A \emph{differential extension} of $E$ is a contravariant functor $\hat{E}^*$ from smooth manifolds to graded Abelian groups together
with natural transformations
$\Omega^{*-1}(-;V)/im(d)	\overset{a}{\Longrightarrow}	\hat{E}^*$,
$\hat{E}^*		\overset{R}{\Longrightarrow}	\Omega^*_{cl}(-;V)$, and
$\hat{E}^*		\overset{I}{\Longrightarrow} E^*$
such that for any smooth manifold $M$ the following diagram commutes and has an
exact upper horizontal line:
$$\xymatrix{
E^{*-1}(M)\ar[r]^-{ch}	&	\Omega^{*-1}(M;V)/im(d)\ar[r]^-a\ar[rd]_d	&	\hat{E}^*(M)\ar[r]^-I\ar[d]_R	&	E^*(M)\ar[r]\ar[d]^{ch}	&	0\\
&&\Omega_{cl}^*(M;V)\ar[r]	&	H^*(M;V)
}$$
Such a differential extension is called \emph{multiplicative} if $\hat{E}$
is a functor into graded commutative rings, if $I$ and $R$ are unital ring
homomorphisms, and if
$$ a(\Theta)\cup \hat{x}=a(\Theta\wedge R(\hat{x}))\hspace{1cm} \forall\Theta\in \Omega^{n-1}(M;V),\, \hat{x}\in \hat{E}^m(M). $$

Historically, the first examples were for $E=H\mathbb{Z}$ and appeared as the sheaf-theoretic Deligne cohomology (see for instance \cite{gajer}) and as Cheeger-Simons differential characters.  These were defined in \cite{cheeger_simons} to provide a natural home for secondary invariants that can take more geometry of say a Riemannian manifold into account. Much later, another model for ordinary differential cohomology was introduced in \cite{bunke_kreck_schick} using stratifolds. 
The question whether the various constructions of ordinary differential cohomology
yield isomorphic theories was answered by \cite{simons_sullivan} and subsequently the case of generalized cohomology was dealt with in \cite{bunke_schick_uniqueness}, establishing criteria when this is true.

Differential refinements of $K$-theory were studied by \cite{lottRZ} and \cite{bunke_schick_k} with which most notably a refinement of the families index theorem may be proven \cite{lott}. On the other hand, \cite{freed_charge_quant} and \cite{freed_RRfields} exhibit the importance of differential $K$-theory in mathematical physics, where it may be used to encode charge quantization phenomena.

Another broad class of examples was defined in \cite{bunke_schick_schroeder_wiethaup} were it is shown that every Landweber exact cohomology theory may be refined to a \emph{multiplicative} differential cohomology theory.
Finally, \cite{hopkins_singer} use abstract homotopy theory to construct differential refinements of \emph{arbitrary} generalized cohomology theories. Unfortunately, starting with a multiplicative cohomology theory, this general construction does not have obvious multiplicative properties, which is what we will take up in the present paper.

Roughly speaking, the construction in \cite{hopkins_singer} starts by representing $E$
by an $\Omega$-spectrum $E_n$ and choosing a refinement of the generalized Chern
character
\[
	E^n(X)=[X,E_n] \xrightarrow{ch} H^n(X;V),\; [f]\mapsto f^*[\iota_n],\hspace{5ex} \text{where } [\iota_n]=ch(\mathrm{id}_{E_n}),
\]
to maps and cochains $E_n^X \rightarrow C^n(X;V)$. This is not canonical, and the different possibilities are the choices of fundamental cocycles $\iota_n\in C^n(E_n;V)$ representing the fundamental cohomology classes $[\iota_n]$.
At this point many difficulties arise since different choices of fundamental cocycles lead to \emph{non-canonically} isomorphic differential extensions. To carry over a construction, like a product, to differential cohomology all the structure needs to be refined in a manner
compatible with the fundamental cocycles. We will deal with this issue by cutting down the non-uniqueness in even dimensions and demanding a compatibility with the so-called integration map. That is, we will work with cohomology theories that are \emph{rationally even}, meaning that $E^*(pt)$ is torsion in all odd degrees. This is a class of cohomology theories that contains
most of the cohomology theories of interest, e.g.
oriented cobordism $MSO$, unoriented $MO$ cobordism, and complex cobordism $MU$, stable cohomotopy theory $\pi_s^*$, ordinary cohomology $HA$ with coefficients in an Abelian group $A$, real, complex, and quaternionic $K$-theory, Brown-Peterson cohomology $BP$,
Morava $K$-theory $K(n)$, John-Wilson theory, and elliptic cohomology theories (but not algebraic $K$-theory -- thanks to U. Bunke for pointing this out to me).
\\
%


The main results of this paper are the following two theorems:

\begin{theorem*}
Let $E$ be a rationally even, multiplicative cohomology theory. Then, there exists a differential extension $\hat{E}^*$ to a multiplicative differential cohomology theory. Moreover, the product structure is compatible with the integration map.
\end{theorem*}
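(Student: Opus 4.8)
The plan is to realize $\hat E$ via the Hopkins--Singer recipe but to pin down the non-canonical choices so that a product can be carried along. First I would fix an $\Omega$-spectrum $E = \{E_n\}$ representing the multiplicative theory, together with the pairing maps $\mu_{m,n}\colon E_m \wedge E_n \to E_{m+n}$ and a unit $S^0 \to E_0$, all strictly associative and commutative up to coherent homotopy (e.g.\ take $E$ to be an $E_\infty$- or at least a homotopy-commutative ring spectrum, choosing a CW model). The heart of the matter is to choose the fundamental cocycles $\iota_n \in C^n(E_n;V)$ so that they are simultaneously (i) multiplicative, i.e.\ $\mu_{m,n}^*\iota_{m+n}$ and $\iota_m \times \iota_n$ agree up to a controlled, natural coboundary, and (ii) compatible with integration, i.e.\ behave correctly under the suspension/loop maps $E_n \to \Omega E_{n+1}$ so that the resulting differential theory admits the integration map of the earlier section. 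The rational-evenness hypothesis is exactly what makes this possible: in odd degrees $E^*(pt)$ is torsion, so $H^{\mathrm{odd}}(E_n;V)$ contributions that would obstruct a coherent choice vanish, and one can solve the relevant cochain-level equations inductively over a CW filtration (or over $n$), the obstructions living in groups that are killed rationally.

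\medskip

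Concretely I would proceed in the following steps. \emph{Step 1:} Recall (from the explicit integration and the differential cohomology for pairs developed earlier in the paper) the model $\hat E^n(M)$ as homotopy classes of triples $(c, h, \omega)$ with $c\colon M \to E_n$, $\omega \in \Omega^n(M;V)$ closed, and $h \in C^{n-1}(M;V)$ with $\delta h = \omega - c^*\iota_n$; the maps $I,R,a$ are the evident ones. \emph{Step 2:} Using the multiplicative fundamental cocycles, define a pairing on triples by
\[
	(c_1,h_1,\omega_1)\cdot(c_2,h_2,\omega_2) = \bigl(\mu(c_1\wedge c_2)\circ\Delta,\ \omega_1\wedge\omega_2,\ h_1\wedge\omega_2 + (-1)^{|c_1|} c_1^*\iota_m\wedge h_2 + (\text{correction})\bigr),
\]
where $\Delta$ is the diagonal and the correction cochain comes from the chosen coboundary between $\mu^*\iota_{m+n}$ and $\iota_m\times\iota_n$; one checks the defining equation $\delta(\text{third slot}) = \omega_1\wedge\omega_2 - (\text{pullback of }\iota_{m+n})$ holds. \emph{Step 3:} Verify that $R$ and $I$ are ring maps (immediate from the formula and the product on $E$), that the unit triple $(u,0,1)$ built from the spectrum unit is a two-sided identity up to the coherence homotopies, and that the $a$-module identity $a(\Theta)\cup\hat x = a(\Theta\wedge R(\hat x))$ holds --- this drops out of the formula because the $c$-component of $a(\Theta)$ is null. \emph{Step 4:} Check graded commutativity and associativity: these reduce to the homotopy-commutativity/associativity of $\mu$ together with the compatibility of the chosen coboundaries under the symmetry and associativity constraints, again using that the relevant obstruction groups are rationally trivial in odd degrees. \emph{Step 5:} Confirm compatibility of $\cdot$ with the integration map, which follows from clause (ii) in the choice of the $\iota_n$.

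\medskip

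The main obstacle is Step 1's prerequisite: choosing the fundamental cocycles $\iota_n$ coherently with respect to \emph{both} the ring structure and the integration/suspension maps at the cochain level, rather than merely up to homotopy. Each individual compatibility is a standard obstruction-theory argument, but making them hold simultaneously and compatibly with the associativity and commutativity constraints requires organizing the choices along a suitable filtration and repeatedly invoking that the obstruction classes lie in $H^{\mathrm{odd}}$-type groups that are torsion under the rational-evenness assumption (so that they die after tensoring with $V$, which is a $\bbQ$-vector space). Once the cochains are fixed, every remaining verification --- ring axioms, unitality, the $a$-module law, naturality, and the compatibility of $I,R,a$ --- is a direct cochain computation with no further choices, so the real content is concentrated in this coherent refinement step.
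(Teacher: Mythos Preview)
Your proposal sketches the right overall architecture (Hopkins--Singer triples, correction cochains witnessing $\mu_{n,m}^*\iota_{n+m}\equiv \iota_n\times\iota_m$, then verify ring axioms and the $a$-module law), but there is a genuine gap in how you invoke rational-evenness, and this is exactly where the paper's argument diverges from yours.

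You claim that the coherence obstructions ``lie in $H^{\mathrm{odd}}$-type groups that are torsion under the rational-evenness assumption.'' This is only true when \emph{both} degrees are even. The correction cochain $M_{n,m}$ lives in $\tilde C^{n+m-1}(E_n\wedge E_m;V)$, and its ambiguity is governed by $\tilde H^{n+m-1}(E_n\wedge E_m;V)$. When $n,m$ are both even this group vanishes (since $V$ is concentrated in even degrees and the rational cohomology of $E_n\times E_m$ is then a polynomial algebra on even generators), so any two choices of $M_{n,m}$ differ by a coboundary and the product is canonical. But if $n$ or $m$ is odd, $E_n$ has rational cohomology an exterior algebra on odd-degree classes, and the corresponding obstruction group need not vanish. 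So your inductive obstruction-theory argument does not go through in mixed or odd bidegrees as stated, and the ``coherent refinement step'' you flag as the main obstacle is in fact unresolved there.

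The paper's route is to sidestep this entirely: it constructs the product \emph{only for even $n,m$} by the formula
\[
\hat x\cup\hat y=\bigl[\mu_{n,m}(c_0,c_1),\ \omega_0\wedge\omega_1,\ B(\omega_0\otimes\omega_1)+h_0\cup\omega_1+(-1)^n\omega_0\cup h_1-h_0\cup\delta h_1+(c_0,c_1)^*M_{n,m}\bigr],
\]
where $B$ is an Acyclic-Models chain homotopy between $\wedge$ and $\cup$ (a term your formula omits, and without which the third slot does not satisfy the required equation). Associativity, commutativity, unit, and the identity $a(\Theta)\cup\hat x=a(\Theta\wedge R(\hat x))$ are then checked in even degrees, each time reducing to showing that a certain cocycle in $\tilde C^{\mathrm{odd}}(E_{\mathrm{even}}\times\cdots;V)$ is a coboundary, which is automatic by the vanishing above. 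The extension to odd degrees is \emph{not} done by further cochain choices but by the integration map: given $\hat x\in\hat E^{n-1}(M)$ with $n$ even, choose $\hat X\in\hat E^n(S^1\times M)$ with $\int_{S^1}\hat X=\hat x$ and set $\hat x\times\hat y:=\int_{S^1}(\hat X\times\hat y)$, etc. Well-definedness uses the exact sequence and the already-established $a$-module law in even degrees; compatibility with integration (your Step~5) is then essentially the definition rather than a separate verification.

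In short: your Step~2 works only for even bidegrees, and the paper's key idea---which your outline misses---is to \emph{define} the odd-degree product via integration from the even case rather than to solve further obstruction problems there.
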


Here, by an \emph{integration map} on $\hat{E}$ we mean a natural transformation $\int_{S^1}$
from $\hat{E}^{*+1}(S^1\times-)$ to $\hat{E}^*$ which lifts the following map on generalized cohomology
$$ \int_{S^1}: E^{n+1}(S^1\times M, 1\times M)\cong \tilde{E}^{n+1}(\Sigma M^+)\cong \tilde{E}^n(M^+)=E^n(M) $$
and is compatible with $I$, $R$, and $a$ and satisfies, in addition, certain reasonable assumptions, compare with section 3.

\begin{theorem*}
For closed submanifolds $N\subset M$ there is a differential cohomology of pairs $\hat{E}^*(M,N)$ with the expected properties;
there is a natural long exact sequence
$$
\xymatrix@R=0.5cm{
\cdots\ar@{-}[r] &\hat{E}^{n-1}_{flat}(N)\ar[r]\ar[r]^-{\delta_1}  & \hat{E}^n(M,N)\ar[r]^{i^*}	&\hat{E}^n(M)\ar[r]^{j^*}	&  \hat{E}^n(N)\ar[r]^-{\delta_2}
& E^{n+1}(M,N)\ar@{-}[r]&\cdots
 }
 $$
\end{theorem*}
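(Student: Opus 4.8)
The plan is to extend the Hopkins--Singer differential cochain complex from manifolds to pairs $(M,N)$ and to obtain the asserted sequence as the long exact cohomology sequence of a homotopy fibre sequence of such complexes. Recall that for a single manifold $X$ the group $\hat E^n(X)$ arises as a cohomology group of a complex $\hat C(n)^*(X)$ assembled, degreewise, from a map $X\to E_n$ (together with the appropriate homotopy and fundamental-cocycle data), a real singular cochain in $C^{*-1}(X;V)$, and a differential form in $\Omega^*(X;V)$. For a closed submanifold $N\subset M$ I would define $\hat C(n)^*(M,N)$ by the same recipe with every ingredient replaced by its relative version: maps $M\to E_n$ equipped with a null-homotopy over $N$, relative cochains $C^{*-1}(M,N;V)$, and relative forms $\Omega^*(M,N;V)$, and define $\hat E^n(M,N)$ to be the corresponding cohomology group of $\hat C(n)^*(M,N)$. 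Functoriality on pairs, the transformations $a$, $R$, $I$ with targets $\Omega^{*-1}(M,N;V)/\im d$, $\Omega^*_{cl}(M,N;V)$ and $E^*(M,N)$, the Hopkins--Singer diagram with exact rows, the identity $\hat E^*(M,\emptyset)=\hat E^*(M)$, and compatibility of $i^*,j^*$ with $a,R,I$ then all follow by transporting the absolute arguments, which are natural and local in the cochains and forms involved.

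The heart of the matter is that
$$\hat C(n)^*(M,N)\ \longrightarrow\ \hat C(n)^*(M)\ \xrightarrow{\,j^*\,}\ \hat C(n)^*(N)$$
is a homotopy fibre sequence of cochain complexes. This rests on the fact that a closed submanifold inclusion $N\hookrightarrow M$ is a cofibration: restriction of differential forms $\Omega^*(M;V)\to\Omega^*(N;V)$ is surjective with kernel $\Omega^*(M,N;V)$; restriction of singular cochains $C^*(M;V)\to C^*(N;V)$ is surjective with kernel $C^*(M,N;V)$ computing relative cohomology; and $\mathrm{Map}(M,E_n)\to\mathrm{Map}(N,E_n)$ is a Serre fibration whose fibre is the space of maps $M\to E_n$ that are null-homotoped over $N$. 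Since $\hat C(n)^*(X)$ is built from these three ingredients by a homotopy pullback over the relative real cochains, and homotopy pullbacks preserve homotopy fibres, the displayed sequence is a homotopy fibre sequence. (Alternatively, one may simply \emph{define} $\hat C(n)^*(M,N)$ to be this homotopy fibre and then check, using the surjectivity statements above, that it is represented by the relative data of the previous paragraph.)

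It then remains to identify the cohomology of $\hat C(n)^*(X)$ in the three degrees that enter the sequence. In the degree producing $\hat E^n(X)$ this is a definition; in the next lower degree the cohomology of $\hat C(n)^*(X)$ is the flat group $\hat E^{n-1}_{flat}(X)$, the kernel of $R$ on $\hat E^{n-1}(X)$; and in the next higher degree the complex has already passed its truncation and reduces to a model for $E$-cochains, so its cohomology is the topological group $E^{n+1}(X)$. The first and third of these are essentially built into the construction; the second needs a short argument and is where the standing hypotheses on $E$ (rational evenness, controlling the Chern character and pinning down the flat theory) are used. Feeding these identifications into the long exact cohomology sequence of the homotopy fibre sequence above yields exactly the stated sequence, with $i^*,j^*$ the restriction maps, $\delta_1$ the connecting map (concretely: view a flat class on $N$ as the trivialization-over-$N$ of the zero cocycle on $M$, hence as a relative cocycle), and $\delta_2$ the connecting map, which one checks coincides with $\delta_E\circ I$ for the topological boundary map $\delta_E\colon E^n(N)\to E^{n+1}(M,N)$. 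As an independent check, exactness at each of the four marked spots can also be obtained by a diagram chase combining the absolute Hopkins--Singer exact sequences with the topological long exact sequence of the pair.

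The main obstacle is the homotopy-fibre-sequence claim together with the computation in the lower adjacent degree: one must set up the relative complex so that all three of its constituents restrict along $N\hookrightarrow M$ as (co)fibrations and so that the homotopy pullback defining $\hat C(n)^*$ is genuinely preserved, and then bookkeep correctly which degrees of $\hat C(n)^*$ compute the differential group $\hat E$, which compute the flat theory $\hat E_{flat}$, and which compute the underlying topological $E$ --- so that the resulting sequence comes out in precisely the stated mixed form rather than the naive sequence built entirely from differential groups, which fails to be exact.
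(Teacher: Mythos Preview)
Your strategy is sound but it is \emph{not} the route the paper takes. The paper never works with a differential function complex $\hat C(n)^*$; instead it defines $\hat E^n(M,N)$ directly as equivalence classes of triples $(c,\omega,h)$ with relative data, and then proves exactness position by position with explicit cocycle manipulations. Exactness at $\hat E^{n-1}_{flat}(N)$, $\hat E^n(M,N)$, and $E^{n+1}(M,N)$ is deduced from the already-known long exact sequences of the generalized cohomology theories $\hat E^*_{flat}\cong E\mathbb{R}/\mathbb{Z}^{*-1}$ and $E^*$; exactness at $\hat E^n(M)$ is done by hand using that $N\hookrightarrow M$ is a cofibration (extending a null-homotopy and a cochain from $N$ to $M$ via a retraction of $I\times M$ onto $0\times M\cup I\times N$); and exactness at $\hat E^n(N)$ is a short diagram chase using the absolute sequence $E^{*-1}\to\Omega^{*-1}/\im d\to\hat E^*\to E^*\to 0$ together with surjectivity of form restriction. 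Your homotopy-fibre-sequence argument is more conceptual and would deliver the whole sequence at once, but it requires importing (or re-establishing) the Hopkins--Singer complex and the identification of its cohomology in the adjacent degrees, whereas the paper stays entirely within its triple-level framework.

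One correction: you invoke rational evenness to identify the lower-degree cohomology with $\hat E^{*-1}_{flat}$. That hypothesis plays no role here. The identification $\hat E^*_{flat}\cong E\mathbb{R}/\mathbb{Z}^{*-1}$ holds for any $E$ (it is quoted from Bunke--Schick and Hopkins--Singer), and the paper's proof of the long exact sequence uses only this together with the cofibration property of $N\subset M$ and the absolute exact sequence. Rational evenness enters the paper only later, for the product structure.
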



\subsubsection*{Notation}

Let $A\subset X$.
Singular chains $C_n(X,A)$, cochains $C^n(X,A)$, and cohomology are understood to have real coefficients. Cocycles and coboundaries will be denoted by $Z^n(X,A)$ and $B^n(X,A)$ respectively. We will make deliberate use of the Acyclic Models Theorem. Reduced cohomology and cochains will be indicated by an upper tilde. The space of closed differential forms will by denoted by $\Omega^n_{cl}(M)$.
For a graded vector space $V$ set
\begin{align*}
C_n(X,A; V)&=\bigoplus\limits_{i+j=n} C_i(X,A;V^j),\\
C^n(X,A; V)&=\prod\limits_{i+j=n} C^i(X,A;V^j)=\Hom{}_\bbR(C_n(X,A;V), \bbR),\\
\Omega^n(M;V)&=\prod\limits_{i+j=n} \Omega^i(M;V^j)=\bigoplus\limits_{i+j=n} \Omega^i(M;V^j).
\end{align*}
If $M$ is a manifold and $i: N\subset M$ a closed submanifold we will write $C^n_{s}(M,N)$ for the complex of smooth cochains, i.e. homomorphisms on smooth chains $C_n^{s}(M)=\bigoplus\limits_{\sigma: \Delta^n\rightarrow M \text{ smooth}} \bbR$ which are zero on $C_n^{s}(N)$.
The \emph{relative deRham complex} is defined by $\Omega^*(M,N)=\left\{ \omega\in\Omega^*(M)\, \middle|\; i^*\omega=0 \right\}$. We then have a short exact sequence
\begin{equation}\label{eqn:KES-forms}
0\rightarrow \Omega^*(M,N)\rightarrow \Omega^*(M)\rightarrow \Omega^*(N)\rightarrow 0
\end{equation}

Fix the standard \emph{Eilenberg-Zilber chain equivalence} $B: C_*(I)\otimes C_*(X)\rightarrow C_*(I\times X)$ coming from the usual subdivision of the prism. The \emph{fundamental $1$-chain} is $[I]: \Delta^1\rightarrow I\in C_1(I),\;\; (t_0,t_1)\mapsto t_1$. \emph{Integration of cochains along the interval} is the map
$\int_I$ from $C^n(I\times X)$ to $C^{n-1}(X)$ given by
$$C^n(I\times X) \ni u\longmapsto \left( C_{n-1}(X)\overset{[I]\otimes id}{\longrightarrow} C_1(I)\otimes C_{n-1}(X)\overset{B}{\rightarrow}C_n(I\times X)\overset{u}{\longrightarrow}\bbR  \right)$$
This may be extended componentwise to cochains with coefficients in a graded vector space and extends the corresponding map for differential forms (cf. \cite{hopkins_singer}, p. 32). By pulling back along the canonical maps we obtain also integration maps for cochains on $S^1\times X$ and on the suspension $\Sigma X=S^1\wedge X$. The following two formulas will be imporant in the sequel:
\begin{equation}\label{eqn:coboundary-integral-formula}
\int_I\delta u+\delta \int_I u=i_1^*u-i_0^*u,\hspace{1cm} u\in C^n(I\times (X,A);V),
\end{equation}
and the \emph{pullback formula} for continuous maps $c:(X,A)\rightarrow (Y,B)$
\begin{equation}\label{eqn:pullback-formula}
\int_I (id_I\times c)^*u=c^*\int_I u,\hspace{1cm} u\in C^n(I\times (Y,B); V).
\end{equation}

\section{Generalized Differential Cohomology for Pairs}
In this section we will generalize the construction of \cite{hopkins_singer} to pairs, exhibit the
mentioned long exact sequence, and review the functorial properties of differential cohomology. Next, we will establish certain ``canonical maps'' which are needed for the construction of products, as well as some technical results for manipulating differential cocycles.

\subsection{The Fundamental Cocycle \& Construction of $\hat{E}$ for Pairs}

Let $E$ be a cohomology theory represented by an $\Omega$-spectrum $(E_n,\varepsilon_n)$, i.e.
a sequence of pointed topological spaces $pt\in E_n$ together with pointed homeomorphisms
$$ \varepsilon_{n-1}^{adj}: E_{n-1}\overset{\approx}{\longrightarrow} \Omega E_n. $$
We have the \emph{generalized Chern character}
$ch: \tilde{E}^*\Longrightarrow \tilde{E}^*\otimes_\bbZ \bbR \overset{\cong}{\Longrightarrow} \tilde{H}^*(-;V)$
which by Yoneda's Lemma is implemented by \emph{fundamental cohomology classes} $[\iota_n]$.
The $[\iota_n]$ are related by the suspension and the structure maps and may therefore be viewed as an element of
$ \lim\limits_{\overset{\longleftarrow}{n}} \tilde{H}^n(E_n;V)$,
the limit being taken over
$ \tilde{H}^{*+n}(E_n;V)\overset{\varepsilon_{n-1}^*}{\longrightarrow} \tilde{H}^{*+n}(\Sigma E_{n-1};V)\overset{susp}{\cong} \tilde{H}^{*+n-1}(E_{n-1};V)$.
We claim that this vector space may be identified with the $0$-th
cohomology of the cochain complex
$ \lim\limits_{\overset{\longleftarrow}{n}} C^{*+n}(E_n,pt;V)$,
where the limit is now taken over
\[
C^{*+n}(E_n,pt;V)\underset{\varepsilon_{n-1}^*}{\longrightarrow} C^{*+n}(\Sigma E_{n-1},pt;V) \underset{\int_{S^1}}{\longrightarrow} C^{*+n-1}(E_{n-1},pt;V).
\]

\begin{proof}[Proof of claim]
The suspension isomorphism can be written as multiplication with $[S^1]$ from the left.
Using universal coefficients, that $\Hom_\bbR$ takes colimits in the first variable to limits, the fact that two limits commute, and that directed colimits and homology commute we deduce:
\begin{align*}
	H^n\Big( \lim\limits_{\overset{\longleftarrow}{k}} &C^{*+k}(E_k,pt;V)  \Big)=H^n\Big(\lim\limits_{\overset{\longleftarrow}{k}} \prod_{j\in\bbZ} C^{*-j+k}(E_k,pt;V^{j})  \Big)
	=H^n\Big( \prod_{j\in\bbZ } \lim\limits_{\overset{\longleftarrow}{k}} C^{*-j+k}(E_k,pt;V^{j})  \Big)\\
	&=\prod_{j\in\bbZ} H^n\Big(\lim\limits_{\overset{\longleftarrow}{k}} C^{*-j+k}(E_k,pt;V^{j})  \Big)
	=\prod_{j\in\bbZ} \Hom{}_\bbR \Big( H_n(\colim C_{*-j+k}(E_k,pt)), V^{j}\Big)\\
	&=\prod_{j\in\bbZ} \Hom{}_\bbR \Big( \colim H_n(C_{*-j+k}(E_k,pt)), V^{j}\Big)
	=\prod_{j\in\bbZ} \lim\limits_{\overset{\longleftarrow}{k}}\Hom{}_\bbR \Big(H_n(C_{*-j+k}(E_k,pt)), V^{j}\Big)\\
	&=\lim\limits_{\overset{\longleftarrow}{k}} \prod_{j\in\bbZ} H^n(C^{*-j+k}(E_k,pt; V^j))=\lim\limits_{\overset{\longleftarrow}{k}} H^n\Big(\prod_{j\in\bbZ} C^{*-j+k}(E_k,pt; V^j)\Big)\\
	&=\lim\limits_{\overset{\longleftarrow}{k}} H^n( C^{*+k}(E_k, pt ;V) )=\lim\limits_{\overset{\longleftarrow}{k}} \tilde{H}^{n+k}(E_k;V)
\end{align*}

\end{proof}

\noindent
If follows that we may choose \emph{fundamental cocycles} $\iota_n \in Z^n(E_n,pt;V)$ with the property that
\begin{equation}\label{eqn:choice-fund-cocycle}
\iota_{n-1}=\int_{S^1} \varepsilon_{n-1}^*\iota_n.
\end{equation}

\begin{definition}
The \emph{$n$-th differential $E$-cohomology} $\hat{E}^n(M,N)$ or, when emphasizing the dependence on the fundamental cocycle, $\hat{H}^n\left((M,N); (E_n,\iota_n,V)  \right)$ is the set of equivalence classes of triples
\begin{align*}
(M,N)\overset{c}{\longrightarrow} (E_n,pt),\;\;\;\;
&\omega \in \Omega_{cl}^n(M,N; V),\;\;\;\;
h\in C^{n-1}_{s}(M,N;V)\hspace{0.5cm}\text{such that}\\
\delta h &= \omega - c^*\iota_n \in C^n_{s}(M,N;V)
\end{align*}
modulo the equivalence relation that $(c_0,\omega_0,h_0)\sim (c_1,\omega_1,h_1)$ iff $\omega_0=\omega_1$ and
\begin{align*}
\exists &I\times 
(M,N) \overset{C}{\longrightarrow} (E_n,pt):\; c_0\simeq c_1\\
	   \exists &H\in C^{n-1}_{s}( I\times (M,N) ;V )\text{ with } i_0^*H=h_0,\, i_1^*H=h_1\\
	   \text{such that }&\delta H=pr^*\omega - C^*\iota_n\in C^n_{s}(I\times (M,N); V).
\end{align*}
\end{definition}
Note that this definition makes sense more generally for
any graded real vector space $V$, pointed topological space $E$, and
reduced singular cocycle $\iota\in Z^n(E,pt;V)$.
The above relation is obviously reflexive and symmetric. Transitivity requires more work:
Suppose $(c_0,\omega,h_0)\sim (c_1,\omega,h_1)\sim (c_2,\omega,h_2)$ via
$(H_0,C_0)$ and $(H_1,C_0)$. By pulling back along a smooth map $I\rightarrow I$
that increases from $0$ to $1$ and is constant on $[0,1/4]$ and $[3/4,1]$ we may
assume that $H_0|_{[0,1/4]}=pr^*h_0, C|_{[0,1/4]}=c_0\circ pr$ and similar on $[3/4,1]$
and for $(H_1,C_1)$. Then, glue $H_0, H_1$ together:

\begin{lemma}\label{lem:gluing}
Let $A \cup B =X$ be an open covering and $\tilde{A}\subset A, \tilde{B}\subset B$ be subsets with
$\tilde{A}\cap B=\tilde{B}\cap A$. Then, restriction
$
	C^n(A\cup B, \tilde{A}\cup\tilde{B})\longrightarrow C^n(A,\tilde{A})\times C^n(B,\tilde{B})
$
is surjective.
\end{lemma}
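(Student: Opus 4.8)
The plan is to build the preimage cochain explicitly from its two given restrictions by exploiting the freedom on the overlap. Given $u \in C^n(A,\tilde{A})$ and $v \in C^n(B,\tilde{B})$, I want $w \in C^n(A \cup B, \tilde{A} \cup \tilde{B})$ restricting to $u$ on $A$ and $v$ on $B$. The natural decomposition to use is the one coming from singular simplices: every singular simplex $\sigma\colon \Delta^n \to A \cup B$ either has image in $A$ or has image in $B$ (this is \emph{not} true, so in fact one should first excise; see below), or more carefully, one uses that $C_*(A)$ and $C_*(B)$ together span $C_*(A\cup B)$ up to a chain equivalence via the small-simplices theorem. So the first step is to recall that, since $A \cup B$ is an open cover, $C_*(A) + C_*(B) = C_*^{\mathcal{U}}(A\cup B) \hookrightarrow C_*(A\cup B)$ is a chain homotopy equivalence, and dually $C^n(A\cup B) \to C^n(A) \times_{C^n(A\cap B)} C^n(B)$ has a quasi-inverse. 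But for \emph{surjectivity} on the nose (not up to homotopy) one argues directly on cochains as follows.

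First I would define $w$ on each singular simplex $\sigma$ of $A \cup B$: if $\sigma$ lies in $A$, set $w(\sigma) = u(\sigma)$; if $\sigma$ lies in $B$ but not in $A$, set $w(\sigma) = v(\sigma)$; for the remaining $\sigma$ (those in neither $A$ nor $B$) set $w(\sigma) = 0$. Well-definedness on the overlap simplices — those lying in $A \cap B$ — is the crux: there we need $u(\sigma) = v(\sigma)$, which is \emph{not} assumed. So this naive recipe fails, and the honest approach is different: one does \emph{not} need $w$ to restrict to $u$ and $v$ literally as functionals, only after the restriction maps $C^n(A\cup B,\tilde A\cup \tilde B)\to C^n(A,\tilde A)$ and $\to C^n(B,\tilde B)$, which are themselves just "restrict the functional to chains in $A$ (resp.\ $B$)". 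So what is really being claimed is surjectivity of $C^n(A\cup B,\tilde A\cup\tilde B) \to C^n(A,\tilde A)\times C^n(B,\tilde B)$, i.e.\ \emph{any} pair $(u,v)$ is hit — with \emph{no} compatibility condition on $A\cap B$. That is possible precisely because a functional on $C_n(A\cup B)$ is determined freely on a basis, and $C_n(A), C_n(B)$ are spanned by \emph{sub}-bases (simplices landing in $A$, resp.\ $B$) which \emph{overlap} in the simplices of $A\cap B$; so one cannot freely prescribe both. Hence the real content must be that the relative conditions ($\tilde A \cap B = \tilde B \cap A$) kill exactly the obstruction.

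So here is the plan I would actually follow. Choose a set-theoretic section: let $S_A$ be the set of singular $n$-simplices with image in $A$, $S_B$ those with image in $B$, $S_{A\cap B} = S_A \cap S_B$. Pick any function $r\colon S_B \to S_A \cup \{*\}$ — no, cleaner: define $w$ on the basis $S_{A\cup B}$ of $C_n(A\cup B)$ by: on simplices in $A$, $w := u$; on simplices in $B \setminus A$, $w := v$; on all other simplices, $w := 0$. Then $w|_{C_n(A)} = u$ automatically. And $w|_{C_n(B)}$ restricted to a simplex $\sigma \in S_B$: equals $u(\sigma)$ if $\sigma\in S_{A\cap B}$ and $v(\sigma)$ if $\sigma\in S_B\setminus S_A$ — so $w|_{C_n(B)} = v$ fails on $A\cap B$ unless $u|_{C_n(A\cap B)} = v|_{C_n(A\cap B)}$. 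Therefore the lemma as literally stated needs the compatibility of $u,v$ on $A\cap B$; since it is asserted without that hypothesis, the resolution must be that we only map into the relative groups $C^n(A,\tilde A)$ — cochains vanishing on $C_n(\tilde A)$ — and the hypothesis $\tilde A\cap B = \tilde B\cap A$ forces any relevant overlap simplex into $\tilde A\cap\tilde B$, where both $u$ and $v$ vanish. Concretely: a simplex $\sigma\in S_{A\cap B}$ contributes to the restriction map into $C^n(B,\tilde B)$ only modulo $C_n(\tilde B)$; writing $w|_{C_n(B)}$ and $v$ as functionals on $C_n(B)/C_n(\tilde B)$, they agree iff they agree on $C_n(B)$ modulo $C_n(\tilde B)$; the discrepancy is supported on $S_{A\cap B}\setminus S_{\tilde B}$; using $\tilde A\cap B=\tilde B\cap A$ one checks $S_{A\cap B}\setminus S_{\tilde B}$...

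The main obstacle, which I expect to absorb most of the work, is exactly this bookkeeping of relative cochains on the overlap: showing that the map factors through $C_n/C_n(\tilde{\cdot})$ in a way that makes the two partially-defined functionals $u,v$ compatible \emph{after} passing to the relative quotients. Concretely I would (i) reduce to the absolute case by the five lemma / snake lemma applied to the short exact sequences $0\to C^n(X,\tilde X)\to C^n(X)\to C^n(\tilde X)\to 0$ for $X\in\{A\cup B,\ A,\ B,\ A\cap B\}$, together with the hypothesis $\tilde A\cap B=\tilde B\cap A$ which identifies $\tilde A\cap B$ as the "overlap" of $\tilde A$ and $\tilde B$; (ii) in the absolute case prove surjectivity of $C^n(A\cup B)\to C^n(A)\times_{C^n(A\cap B)} C^n(B)$ — \emph{this} is the clean statement, the fiber product over $A\cap B$ — by the basis argument above (freely prescribe $w$ on basis simplices, consistently because $u,v$ now \emph{do} agree on $A\cap B$); and (iii) check that the relative hypothesis makes the relative pair $(u,v)$ automatically land in the right fiber product, i.e.\ that the relativization of "agree on $A\cap B$" is "agree on $A\cap B$ modulo $\tilde A\cap B$", which is vacuous for the way these cochains will be used downstream (they will be pulled back from $A$ and $B$ separately). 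I'll present it via (ii)+(iii), since (ii) is the real topological input (openness of the cover, used nowhere above — in fact for \emph{cochains} one doesn't even need openness, the basis argument is purely combinatorial, so I'd drop that and just use that $S_A\cup S_B$ together with $S_{\text{neither}}$ partitions $S_{A\cup B}$) and (iii) is where the stated hypothesis $\tilde A\cap B=\tilde B\cap A$ earns its keep.
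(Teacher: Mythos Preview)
You have correctly put your finger on a genuine sloppiness in the statement: the restriction map cannot be surjective onto the \emph{full} product $C^n(A,\tilde A)\times C^n(B,\tilde B)$, since any $w$ in the source satisfies $w|_{A\cap B}=w|_{A\cap B}$, forcing $u|_{A\cap B}=v|_{A\cap B}$. The correct target is the fiber product over $C^n(A\cap B)$, and indeed in every application the paper makes of this lemma the cochains have been arranged to agree on the overlaps. The hypothesis $\tilde A\cap B=\tilde B\cap A$ is used exactly where you located it: it guarantees $(\tilde A\cup\tilde B)\cap A=\tilde A$ and $(\tilde A\cup\tilde B)\cap B=\tilde B$, so that the glued cochain really lies in the relative group. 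Your hope that this hypothesis alone forces compatibility on $A\cap B$ is, as you yourself suspected, unfounded.

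Your eventual approach is genuinely different from the paper's. You define $w$ freely on the basis of singular simplices: $u$ on simplices in $A$, $v$ on simplices in $B\setminus A$, and $0$ on simplices lying in neither. This is correct for the fiber-product statement, is purely combinatorial, and uses neither openness of the cover nor any subdivision. The paper instead introduces a subdivision operator $S$ (built from a projection $\Delta^{n+1}\to\Delta^n$) and, for an arbitrary simplex $\sigma$, applies $S$ the minimal number of times needed to make every summand small, then evaluates $u$ or $v$ on the small pieces. What the paper's more elaborate construction buys is control on the non-small simplices: because $S$ fixes cocycles, a cocycle identity $\delta H=z$ that holds on the small pieces propagates to all simplices. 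This is exploited immediately after the lemma, where one must verify $\delta H=pr^*\omega-C^*\iota_n$ for the glued $H$. With your definition ($w=0$ on non-small simplices) that global identity would simply fail on a simplex straddling the two pieces, so while your argument establishes the lemma itself, the resulting cochain would not serve the purpose the lemma is invoked for.
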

\begin{proof}
Let $u\in C^n(A,\tilde{A}), v\in C^n(B,\tilde{B})$ and
consider the projection
$\pi: \Delta^{n+1}\rightarrow \Delta^n$, $\pi(t_0,\ldots,t_{n+1})= \left(t_0+\frac{t_{n+1}}{n+1},\ldots, t_n+\frac{t_{n+1}}{n+1}\right)$.
Define a subdivision operator by
$$ S: C_n(X)\rightarrow C_n(X),\hspace{2ex} (\Delta^n \overset{\sigma}{\longrightarrow} X) \mapsto (-1)^n \partial (\sigma\circ \pi)+\sigma$$
$\pi\circ d^i: \Delta^n\rightarrow \Delta^n$ parameterizes a subset of $\Delta^n$ of area $\frac{area(\Delta^n)}{n+1}$.
Therefore, after a finite \emph{minimal} number $m=m(\sigma)$ of applications of $S$ any simplex $\sigma \in C_n(X)$ will be a chain $\sum\limits_k n_k\cdot \tau_k$ consisting only of simplices $\tau_k$ whose image lies entirely in $A$ or entirely in $B$. We define
$w(\sigma)$ as $\sum_k n_k \begin{cases}u(\tau_k)\text{ if }\tau_k(\Delta^n)\subset A,\\v(\tau_k)\text{ if }\tau_k(\Delta^n)\subset B.\end{cases}$ By minimality, $w$ restricts to $u$ and $v$. For the last statement we remark that our subdivision operator takes the subcomplex of smooth chains to itself.
\end{proof}

Now, apply the above lemma to $[0,2]\times (M,N)$ with open cover
$[0,1[\times(M,N)$, $]\frac 34, \frac54[\times(M,N)$, $]1,2]\times(M,N)$
and the cochains $H_0, pr^*h_1, H_1$ translated properly to obtain $H$. The cochain $H$ clearly restricts to $h_0$ and $h_2$ at the endpoints. Let $C=C_0* C_1$ be the composition of homotopies. To show $(c_0,\omega,h_0)\sim (c_1,\omega,h_1)$ it remains to verify
$ \delta H = pr^*\omega-C^*\iota_n$
for which it suffices to consider a smooth simplex $\sigma$ with image entirely contained in one of the three open subsets -- since $S$ leaves cocycles invariant. For example if $\sigma(\Delta^n)\subset [0,1[\times M$ then $\sigma=i_*\sigma$ for the inclusion $i:[0,1[\times M \subset [0,2]\times M$. We have
\begin{align*}
(\delta H)(\sigma)&=(\delta i^*H)(\sigma)=(\delta H_0)(\sigma)=pr^*\omega(\sigma)-(C_0^*)\iota_n(\sigma)
\end{align*}
which equals $(pr^*\omega-C^*\iota_n)(\sigma)$.
This concludes the proof of transitivity.\\



\begin{lemma}\label{lem:cob-trans}
For any $v\in C^{n-1}(X,A)$ there exists a cocycle $E\in Z^n(I\times (X,A))$ such that
$ i_0^*E=0,\hspace{1ex} i_1^*E=\delta v.$
In particular, two differential cocycles $[c,\omega,h]$ and $[c,\omega,h']$ are equal
if $h$ and $h'$ differ by a coboundary.
\end{lemma}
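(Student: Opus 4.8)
The plan is to write down an explicit such $E$ as the coboundary of a cup product with a $0$-cochain on $I$ that detects the two endpoints. Fix $g\in C^0(I;\bbR)$ to be the $0$-cochain sending a $0$-simplex $\sigma:\Delta^0\to I=[0,1]$ to its value $\sigma(\mathrm{pt})\in\bbR$; the only features we use are that $g$ takes the value $0$ on the constant $0$-simplex at $0\in I$ and the value $1$ on the constant $0$-simplex at $1\in I$. Writing $\pi_X:I\times X\to X$ and $\pi_I:I\times X\to I$ for the projections, set
\[
 E:=\delta\bigl(\pi_X^*v\cup\pi_I^*g\bigr)\in C^n\bigl(I\times(X,A)\bigr).
\]
This is a cocycle because it is a coboundary, and it lies in the relative complex because $v$ vanishes on $A$, hence $\pi_X^*v$ vanishes on $I\times A$, hence so does the cup product and so does its coboundary.

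Next I would compute the restrictions to the two ends. Since restriction along $i_0$ and $i_1$ is a morphism of differential graded rings it commutes with $\delta$ and with $\cup$, so $i_j^*E=\delta\bigl(i_j^*\pi_X^*v\cup i_j^*\pi_I^*g\bigr)$ for $j\in\{0,1\}$. Here $\pi_X\circ i_j=\id_X$, so $i_j^*\pi_X^*v=v$; and $\pi_I\circ i_j$ is the constant map at $j$, so $i_0^*\pi_I^*g$ is the zero $0$-cochain while $i_1^*\pi_I^*g$ is the unit $0$-cochain $1_X$ that assigns $1$ to every $0$-simplex. For the standard Alexander--Whitney cup product one has $v\cup 1_X=v$ strictly on the cochain level (the back $0$-face picks out the last vertex), so $i_0^*E=\delta 0=0$ and $i_1^*E=\delta v$, as required. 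All of the operations used (the front/back face operators in the cup product, the projections $\pi_X,\pi_I$, the restrictions $i_j$) preserve the subcomplex of smooth cochains, so the same formula proves the statement for smooth cochains on a manifold pair $(M,N)$, and it extends componentwise to $V$-coefficients.

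For the ``in particular'' clause, suppose $[c,\omega,h]$ and $[c,\omega,h']$ are differential cocycles in $\hat E^n(M,N)$ with $h'-h=\delta v$ for some $v\in C^{n-2}_s(M,N;V)$. Applying the first part (with $n$ replaced by $n-1$) to $v$ gives a cocycle $E\in Z^{n-1}_s\bigl(I\times(M,N);V\bigr)$ with $i_0^*E=0$ and $i_1^*E=\delta v$. Take the constant homotopy $C:=c\circ pr:I\times(M,N)\to(E_n,pt)$ and the cochain $H:=pr^*h+E\in C^{n-1}_s\bigl(I\times(M,N);V\bigr)$. Then $i_0^*H=h$, $i_1^*H=h+\delta v=h'$, and
\[
 \delta H=pr^*\delta h+\delta E=pr^*(\omega-c^*\iota_n)=pr^*\omega-C^*\iota_n,
\]
so $(H,C)$ witnesses $[c,\omega,h]=[c,\omega,h']$ in $\hat E^n(M,N)$.

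The only delicate point is that we genuinely need \emph{strict} unitality $v\cup 1_X=v$ at the cochain level, not merely up to a coboundary, in order to get $i_1^*E=\delta v$ on the nose; this is why the explicit Alexander--Whitney formula and the particular unit $0$-cochain $1_X$ are invoked. One must also check that none of the constructions leave the smooth subcomplex when $(X,A)=(M,N)$ is a manifold pair, which holds since the face inclusions $\Delta^p\hookrightarrow\Delta^{p+q}$ and the maps $\pi_X,\pi_I,i_j$ are all smooth. Both verifications are routine.
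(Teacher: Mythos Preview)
Your proof is correct and uses essentially the same ingredient as the paper: the Alexander--Whitney structure together with the $0$-cochain $g=\varepsilon$ on $I$ that records the coordinate of a point. The paper defines $E$ directly via the composite
\[
C_n(I\times X)\xrightarrow{pr\circ A} C_0(I)\otimes C_n(X)\xrightarrow{id\otimes\partial} C_0(I)\otimes C_{n-1}(X)\xrightarrow{\varepsilon\otimes v}\bbR,
\]
which unwinds to $E(\sigma)=pr_I(\sigma(e_0))\cdot(\delta v)(pr_X\circ\sigma)$, i.e.\ essentially $\pi_I^*g\cup\pi_X^*\delta v$; you instead take $E=\delta(\pi_X^*v\cup\pi_I^*g)$. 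The difference is not merely cosmetic. Your $E$ is a coboundary and hence manifestly a cocycle, whereas the paper's formula has $\delta(\pi_I^*g\cup\pi_X^*\delta v)=\pi_I^*(\delta g)\cup\pi_X^*\delta v$, which does not vanish since $\delta g\neq 0$. Both constructions satisfy the endpoint conditions $i_0^*E=0$ and $i_1^*E=\delta v$, but only yours is actually closed---and closedness is exactly what is used downstream (for instance in checking $\delta H=pr^*\omega-C^*\iota_n$). So your packaging is the correct one; the paper's direct formula needs the extra correction term $(-1)^{n-1}\pi_X^*v\cup\pi_I^*\delta g$ that automatically appears when one expands your coboundary.
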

\begin{proof}
Suppose $A=\emptyset$.
The proof is based on the Alexander-Whitney map (cf. \cite{tomDieck}, p.240)
$$A: C_n(X\times Y) \rightarrow \bigoplus\limits_{p+q=n} C_i(X)\otimes C_j(Y),\;
A\sigma=\sum_{p+q=n} {}_p(pr_1\circ \sigma)\otimes (pr_2 \circ \sigma)_q$$
which is a natural chain equivalence, where ${}_p\sigma(t_0,\ldots, t_p)
=\sigma(t_0,\ldots, t_p,0\ldots,0)$ denotes the front $p$-face, and similarly
$\sigma_q$ is the back $q$-face, with the zeros up front. Define $E$ as
$$E: C_n(I\times X)\overset{pr\circ A}{\longrightarrow} C_0(I)\otimes C_n(X)
\overset{id\otimes\partial}{\longrightarrow}
	C_0(I)\otimes C_{n-1}(X)\overset{\varepsilon\otimes v}{\longrightarrow} \bbR$$
where $\varepsilon\left( \sum\limits_{x\in I}r_x[\Delta^0\rightarrow \{x\}\subset I]
\right):=\sum\limits_{x\in I} r_x\cdot x$. Using the explicit formula for $A$ it is easy to check $i_0^*E=0, i_1^*E=\delta v$.
In the relative case, by naturality of $A$, $E$ is zero on $C_n(I\times A)$. 
There is an analogous result for smooth
cochains since $A$ takes the complex of smooth cochains to itself . A version with coefficients in a graded vector space is also easily deduced from our result.
\end{proof}
\noindent
Define
\begin{align*}
I: \hat{E}^n(M,N)\rightarrow E^n(M,N),&\hspace{0.5cm} [c,\omega,h]\mapsto [c]\\
R: \hat{E}^n(M,N)\rightarrow \Omega_{cl}^n(M,N;V),&\hspace{0.5cm} [c,\omega,h]\mapsto \omega\\
a: \Omega^{n-1}(M,N;V)\rightarrow \hat{E}^n(M,N),&\hspace{0.5cm} \Theta \mapsto [const_{pt\in E_n}, d\Theta, \Theta]
\end{align*}
The map $a$ is well-defined since $\delta \Theta=d\Theta$ and  $const^*\iota_n=0$. Also, by Lemma \ref{lem:cob-trans}, $a$ is zero on $im(d)$. It is clear also that
$ch\circ I=can\circ R$ and $R\circ a = d$.

\begin{theorem}\label{thm:axiom-verific}
The following sequence is exact:
$$ E^{n-1}(M,N)\overset{ch}{\longrightarrow} \Omega^{n-1}(M,N;V)/im(d) \overset{a}{\longrightarrow} \hat{E}^n(M,N) \overset{I}{\longrightarrow} E^{n}(M,N) \longrightarrow 0 $$
\end{theorem}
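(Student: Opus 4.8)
The plan is to verify exactness at each of the three spots in the sequence, treating surjectivity of $I$ first since it is the easiest. For surjectivity of $I$: given a class $[c]\in E^n(M,N)$ represented by $c\colon (M,N)\to (E_n,pt)$, the cochain $c^*\iota_n\in Z^n_s(M,N;V)$ is a cocycle representing $ch([c])$ in singular cohomology; pick a closed form $\omega\in\Omega^n_{cl}(M,N;V)$ whose de Rham class maps to $ch([c])\in H^n(M,N;V)$ (possible since de Rham cohomology computes real singular cohomology, using a collar for the relative statement), so that $\omega-c^*\iota_n$ is exact as a smooth real cochain, say $\omega-c^*\iota_n=\delta h$; then $[c,\omega,h]$ maps to $[c]$.

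Exactness at $\hat{E}^n(M,N)$: the inclusion $\im a\subseteq \ker I$ is immediate from the definitions, since $a(\Theta)=[const_{pt},d\Theta,\Theta]$ has underlying map the constant map, which is null-homotopic. Conversely, suppose $[c,\omega,h]\in\ker I$, so $c$ is homotopic to the constant map via $C\colon I\times(M,N)\to(E_n,pt)$ with $i_0^*C=c$, $i_1^*C=const_{pt}$. I would use the homotopy to slide the cocycle: set $\Theta:=h+\int_I C^*\iota_n\in C^{n-1}_s(M,N;V)$, and compute $\delta\Theta$ using the coboundary-integral formula \eqref{eqn:coboundary-integral-formula} applied to $C^*\iota_n$, namely $\int_I\delta C^*\iota_n+\delta\int_I C^*\iota_n=i_1^*C^*\iota_n-i_0^*C^*\iota_n=-c^*\iota_n$; since $\iota_n$ is a cocycle the first term vanishes, so $\delta\int_I C^*\iota_n=-c^*\iota_n$, hence $\delta\Theta=\omega-c^*\iota_n-c^*\iota_n$... so care is needed with signs and with the fact that $\Theta$ is a real cochain, not yet a form. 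The honest statement is that $[c,\omega,h]=a(\bar\Theta)$ where $\bar\Theta\in\Omega^{n-1}(M,N;V)/\im d$ is obtained by replacing the real cochain $h+\int_I C^*\iota_n$ by a differential form in its cohomology class; one checks $\delta\Theta=\omega$ as a real cocycle means $[\Theta]$ and a form $\tilde\Theta$ with $d\tilde\Theta=\omega$ (exists because $\omega$ represents a class in the image of... actually $\omega$ represents the de Rham class of $c^*\iota_n=$ the class of a nullhomotopic map, hence $0$) differ by a cocycle, and then Lemma \ref{lem:cob-trans} and the homotopy $C$ together identify $[c,\omega,h]$ with $a(\tilde\Theta)$.

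Exactness at $\Omega^{n-1}(M,N;V)/\im d$: one must show $\ker a=\im(ch)$. That $ch$ lands in $\ker a$: if $\Theta=ch(\xi)$ for $\xi\in E^{n-1}(M,N)$, represent $\xi$ by $c\colon(M,N)\to(E_{n-1},pt)$, use the structure map and integration along $S^1$ (equivalently the suspension) to produce a nullhomotopy witnessing $a(\Theta)=0$; this is where the compatibility \eqref{eqn:choice-fund-cocycle} of the fundamental cocycles under $\int_{S^1}$ enters, since $ch(\xi)$ is represented by $c^*\iota_{n-1}=c^*\int_{S^1}\varepsilon_{n-1}^*\iota_n$ and one needs this to match a boundary term coming from the adjoint map $M\to\Omega E_n$, i.e. $S^1\times M\to E_n$. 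Conversely if $a(\Theta)=0$, unwinding the equivalence relation gives a nullhomotopy $C$ of $const_{pt}$ (i.e. a based map $\Sigma M^+\to E_n$, equivalently a class $\eta\in\widetilde E^n(\Sigma M^+)\cong E^{n-1}(M)$, relative version $\widetilde E^n(\Sigma(M/N))\cong E^{n-1}(M,N)$) together with $H$ satisfying $\delta H=pr^*d\Theta-C^*\iota_n$; integrating along $I$ and using \eqref{eqn:coboundary-integral-formula}, \eqref{eqn:pullback-formula}, and \eqref{eqn:choice-fund-cocycle} shows $\Theta$ equals $ch(\eta)$ modulo exact forms.

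The main obstacle I anticipate is the last exactness statement, specifically the bookkeeping needed to pass between a homotopy $C\colon I\times(M,N)\to(E_n,pt)$ rel endpoints, the adjoint map landing in $\Omega E_n\cong E_{n-1}$, and the integration-of-cochains formulas — getting the suspension isomorphism, the structure map $\varepsilon_{n-1}$, and the defining relation $\iota_{n-1}=\int_{S^1}\varepsilon_{n-1}^*\iota_n$ to line up with correct signs so that the boundary cochain $\int_I C^*\iota_n$ is recognized as (cohomologous to) $\pm c^*\iota_{n-1}$ for the corresponding map $c\colon(M,N)\to(E_{n-1},pt)$. The surjectivity of $I$ and the inclusions $\im a\subseteq\ker I$, $\im ch\subseteq\ker a$ are routine; the two reverse inclusions both rely on the same homotopy-to-cochain dictionary, so once that is set up cleanly the argument should close.
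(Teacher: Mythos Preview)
Your proposal is essentially the paper's proof, and your identification of the ``hard spot'' is accurate: the paper handles exactly the bookkeeping you worry about by setting $c=(\varepsilon_{n-1}^{adj})^{-1}\circ C^{adj}$ and then computing $c^*\iota_{n-1}=\int_I C^*\iota_n$ directly from \eqref{eqn:choice-fund-cocycle} and \eqref{eqn:coboundary-integral-formula}.

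Two places deserve tightening. First, your sign: with $i_0^*C=c$, $i_1^*C=const$ one gets $\delta\int_I C^*\iota_n=-c^*\iota_n$, so the correct cochain is $\Theta=h-\int_I C^*\iota_n$ (not $+$), giving $\delta\Theta=\omega$; this is exactly the content of Proposition~\ref{prop:homotopy}, which the paper proves separately and uses for $a\circ ch=0$. Second, and more substantively, the passage ``replace the real cochain by a differential form in its cohomology class and then Lemma~\ref{lem:cob-trans} and the homotopy $C$ identify\dots'' hides two distinct steps that the paper makes explicit: since $\delta\Theta=\omega$ is a form, relative de~Rham gives a form $\kappa$ with $d\kappa=\omega$ (your $\tilde\Theta$), and then $\Theta-\kappa$ is a \emph{cocycle}, hence cohomologous to a closed form $\eta$, so $\Theta=(\kappa+\eta)+\delta v$; Lemma~\ref{lem:cob-trans} disposes of $\delta v$, but one still needs an actual pair $(C,H)$ witnessing $(c,\omega,h)\sim(const,d(\kappa+\eta),\kappa+\eta)$. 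The paper constructs $H$ by hand; equivalently one can invoke Proposition~\ref{prop:homotopy} first to reach $(const,\omega,\Theta)$ and then Lemma~\ref{lem:cob-trans}. Either way this step is not automatic from Lemma~\ref{lem:cob-trans} alone.
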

\begin{proof}
We begin with two simple observations from the relative de Rham isomorphism:
\begin{enumerate}
\item[(i)] Every smooth cocycle is cohomologous to a closed form.
\item[(ii)] If a closed form bounds a singular cochain it also bounds a form.
\end{enumerate}
The map $I$ is surjective by (i) applied to the cocycle $c^*\iota_n$. It is also clear that $I\circ a=0$.\\

Suppose that $I[c,\omega,h]=0$, i.e. that we have a homotopy $C: c\simeq const$ rel $N$. Since
also $C\simeq const$ rel $N$ we may pick $e\in C^{n-1}(I\times (M,N); V)$ with $\delta e=C^*\iota_n$.
Consider $$u=h+i_0^*e-i_1^*e\in C^{n-1}_{s}(M,N;V)$$ for the inclusions $i_0, i_1: (M,N)\rightarrow I\times (M,N)$. Then $\delta u=\omega$ is a differential form so that by (ii) we may write $\delta u = d\kappa$ for a differential form $\kappa\in \Omega^{n-1}(M,N;V)$. By (i) we may then write $u-\kappa=\eta+\delta v$ for $\eta\in \Omega^{n-1}_{cl}(M,N;V), v\in C^{n-2}_{s}(M,N;V)$.
By Lemma \ref{lem:cob-trans} choose a cocycle $E\in Z^{n-1}_{s}( I\times (M,N);V)$ with
$ i_0^*E=0,\hspace{1ex} i_1^*E=\delta v$.
Setting $H:=pr^*h+(pr^*i_0^*-id)e-E \in C^{n-1}_{s}(I\times(M,N);V)$ and $\Theta=\kappa+\eta$, the
pair $(C,H)$ witnesses $(c,\omega,h)\sim (const, d\Theta, \Theta)=a(\Theta)$.

Suppose next that $(const,0,0)\sim a(\Theta)=(const,d\Theta,\Theta)$ i.e. that we have $d\Theta=0$
and are given $H\in C^{n-1}_{s}(I\times(M,N);V)$, $C: const\simeq const: (M,N)\rightarrow (E_n,pt)$ with
$$ i_0^*H=\Theta,\; i_1^*H=0,\hspace{1cm} \delta H=-C^*\iota_n.$$
We have to show that $\Theta$ is cohomologous to an element of type $c^*\iota_{n-1}$ for a map
$c: (M,N)\rightarrow (E_{n-1},pt)$. Take $c=(\varepsilon_{n-1}^{adj})^{-1}\circ C^{adj}$, i.e. $C=\varepsilon_{n-1}\circ(\Sigma c)$. Then
$$ c^*\iota_{n-1}=c^*\int_I \varepsilon_{n-1}^*\iota_n=\int_I C^*\iota_n=-\int_I \delta H=\delta\int_I H + i_0^*H-i_1^*H \equiv \Theta.$$
It follows that $\Theta\equiv ch[c]$. From the following lemma we conclude finally that $a\circ ch=0$, applying it to the homotopy $C: const\simeq const$, corresponding as above to $c: (M,N)\rightarrow (E_{n-1},pt)$.
\end{proof}

\begin{proposition}\label{prop:homotopy}
Given a homotopy $C: c_0\simeq c_1\; (\text{rel } N)$ we obtain for all $\omega,h_0$ equivalences
$$ (c_0,\omega, h_0) \sim (c_1, \omega, h_0 - \int_I C^*\iota_n)\hspace{0.5cm}\text{ in }\hat{E}^n(M,N)$$
\end{proposition}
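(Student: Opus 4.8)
\emph{Proof proposal.} The plan is to construct, from the given homotopy $C$, an explicit witness of the equivalence, namely a smooth cochain $H\in C^{n-1}_{s}(I\times(M,N);V)$ with $\delta H=pr^*\omega-C^*\iota_n$, $i_0^*H=h_0$ and $i_1^*H=h_0-\int_IC^*\iota_n$. Since $(c_0,\omega,h_0)$ is a differential cocycle we have $\delta h_0=\omega-c_0^*\iota_n$, and, as $c_0=C\circ i_0$, one checks $pr^*(c_0^*\iota_n)=(i_0\circ pr)^*(C^*\iota_n)$. Writing $p_0=i_0\circ pr\colon I\times M\to I\times M$ for the collapse onto the slice $t=0$, the ansatz $H=pr^*h_0+g$ therefore reduces the task to producing $g\in C^{n-1}_{s}(I\times(M,N);V)$ with
\[
	\delta g=p_0^*(C^*\iota_n)-C^*\iota_n,\qquad i_0^*g=0,\qquad i_1^*g=-\int_IC^*\iota_n.
\]

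For $g$ I would use the scaling map $F\colon I'\times\bigl(I\times(M,N)\bigr)\to\bigl(I\times M,\ I\times N\bigr)$, $F(s,t,x)=(st,x)$, where $I'$ is an auxiliary copy of the unit interval with coordinate $s$; it is a homotopy of maps $I\times M\to I\times M$ running from $F_0=p_0$ to $F_1=\id$. Since $\iota_n$ is a cocycle, so is $F^*C^*\iota_n$, whence \eqref{eqn:coboundary-integral-formula} shows that $\bar g:=-\int_{I'}F^*C^*\iota_n$ satisfies $\delta\bar g=F_0^*(C^*\iota_n)-F_1^*(C^*\iota_n)=p_0^*(C^*\iota_n)-C^*\iota_n$, as wanted; it is a smooth cochain because $F$ is smooth and $B$ preserves smooth chains. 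Its two endpoint restrictions are read off from the pullback formula \eqref{eqn:pullback-formula}: because $F\circ(\id_{I'}\times i_1)$ is the interval-relabelling inclusion $I'\times M\hookrightarrow I\times M$, one gets $i_1^*\bar g=-\int_IC^*\iota_n$, exactly the value we need; but $F\circ(\id_{I'}\times i_0)$ is the $s$-independent map $(s,x)\mapsto(0,x)=(i_0\circ pr_M)(s,x)$, which only yields $i_0^*\bar g=-z$ with
\[
	z:=\int_{I'}pr_M^*(c_0^*\iota_n)\ \in\ C^{n-1}_{s}(M,N;V).
\]

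The one point needing care is that $z$ need not vanish: unlike for differential forms, $\int_I$ does not annihilate arbitrary cochains pulled back from the base (already $\int_I$ of the base-pullback of a $0$-cochain records its values on constant paths). I would circumvent this by showing $z$ is an \emph{exact} cocycle. Using $c_0^*\iota_n=\omega-\delta h_0$ on smooth chains, that $\int_I$ kills $pr_M^*\omega$ (it extends fibre integration of forms and $pr_M^*\omega$ has no fibre component), and \eqref{eqn:coboundary-integral-formula} applied to $pr_M^*h_0$ (both of whose endpoint restrictions equal $h_0$), one obtains
\[
	z=\int_{I'}pr_M^*\omega-\int_{I'}\delta(pr_M^*h_0)=\delta\!\int_{I'}pr_M^*h_0=\delta w,\qquad w:=\int_{I'}pr_M^*h_0\in C^{n-2}_{s}(M,N;V).
\]
Hence $H:=pr^*h_0+\bar g$ together with $C$ witnesses $(c_0,\omega,\,h_0-\delta w)\sim(c_1,\omega,\,h_0-\int_IC^*\iota_n)$, while Lemma \ref{lem:cob-trans} gives $[c_0,\omega,h_0]=[c_0,\omega,h_0-\delta w]$, these $h$-entries differing by the coboundary $\delta w$; composing the two identities proves the proposition. (Equivalently, one could first correct $\bar g$ by the cocycle that Lemma \ref{lem:cob-trans} manufactures from $w$, so that $H$ restricts correctly at both endpoints on the nose; it is the same argument.) The genuine obstacle is precisely this failure of $\int_Ipr^*=0$ on singular cochains, which is what forces one to invoke the differential-cocycle relation $\delta h_0=\omega-c_0^*\iota_n$ in order to recognise $z$ as a coboundary; everything else is routine bookkeeping with \eqref{eqn:coboundary-integral-formula} and \eqref{eqn:pullback-formula}.
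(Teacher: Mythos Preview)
Your argument is correct and follows essentially the same strategy as the paper: build the witness $H$ as $pr^*(\text{one of the }h_i)$ plus the integral over an auxiliary interval of a secondary homotopy, observe that one endpoint restriction is off by a term of the form $\int_I pr^*(c_i^*\iota_n)$, and absorb that term via Lemma~\ref{lem:cob-trans} after rewriting $c_i^*\iota_n=\omega-\delta h_i$ and using $\int_I pr^*\omega=0$ for forms. The only cosmetic difference is the choice of secondary homotopy and anchor: the paper starts from $pr^*h_1$ and uses the piecewise homotopy $K(s,t,x)=C(\max(s,t),x)$ from $C$ to $c_1\circ pr$ (so the coboundary defect appears at $i_1$), whereas you start from $pr^*h_0$ and use the linear scaling $F(s,t,x)=(st,x)$ to deform $c_0\circ pr$ to $C$ (so the defect appears at $i_0$); the endgame is identical.
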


\begin{proof}
Set $h_1=h_0-\int_I C^*\iota_n$.
We first remark that $\left(c_1, \omega, h_1\right)$ satisfies $\delta\left( h_0-\int_I C^*\iota_n \right)=(\omega - c_0^*\iota_n)+i_0^*C^*\iota_n - i_1^*C^*\iota_n=\omega-c_1^*\iota_n$.
We seek a suitable $H\in C^{n-1}_{s}(I\times (M,N))$ with $i_0^* H=h_0$
, $i_1^* H=h_1$ and $\delta H=pr^*\omega-C^*\iota_n=\delta pr^*h_1+\left((c_1\circ pr)^*-C^*\right)\iota_n $.
The homotopy $K: C\simeq (c_1\circ pr)$
$$ K: I\times I \times (M,N)\rightarrow (E_n,pt),\; (s,t,x)\mapsto \begin{cases}
C(t,x)\;\;(s\leq t),\\
C(s,x)\;\;(s\geq t).
\end{cases}$$
is relative to $I\times N$ and
yields a chain homotopy $$\delta \int_I K^*\iota_n = (i_1)^*K^*\iota_n-(i_0)^*K^*\iota_n=(c_1\circ pr)^*\iota_n-C^*\iota_n$$
We may thus take $H=pr^*h_1+\int_I K^*\iota_n$. Then
\begin{align*}
(i_0)^*H&=h_1+\int_I (id_I\times i_0)^*K^*\iota_n=h_1+\int_I C^*\iota_n=h_0\\
(i_1)^*H&=h_1+\int_I (id_I\times i_1)^*K^*\iota_n=h_1+\int_I (c_1\circ pr)^*\iota_n
\end{align*}
The assertion now follows from Lemma \ref{lem:cob-trans} since $\int_I (c_1\circ pr)^*\iota_n=\int_I pr^*(\omega-\delta h_1)=\int_I pr^*\omega-\int_I pr^*\delta h_1=-\int_I pr^*\delta h_1=\delta \int_I pr^*h_1$ is a coboundary, using the fact that $\int_I pr^*\omega=0$ for differential forms.
\end{proof}

\subsection{The Exact Sequence of a Pair}

\begin{definition}
The associated \emph{flat} theory is defined as
$$ \hat{E}_{flat}^*(M,N):=\ker\left(	\hat{E}^*(M,N)	\overset{R}{\longrightarrow} \Omega^*_{cl}(M,N)	\right) $$
\end{definition}

It has been shown in \cite{bunke_schick_uniqueness} and \cite{hopkins_singer} that $\hat{E}_{flat}^*$ is naturally
isomorphic to $E\bbR/\bbZ^{*-1}$. In particular, it is a generalized cohomology theory and we have a long exact sequence of pairs.

\begin{theorem}\label{thm:exact-seq-pairs}
For any closed submanifold $N\subset M$ we have a natural exact sequence
$$
\xymatrix@R=0.5cm{
\ar@{-}[r]&\hat{E}^{n-1}_{flat}(M,N)\ar[r]^{i^*}	&	\hat{E}^{n-1}_{flat}(M)\ar[r]^{j^*}	&	\hat{E}^{n-1}_{flat}(N)\ar@{->}`r[rd]`[rd]^{\delta_1}`[llldd]`[lldd][lldd]	&\\
& & && \\
& \hat{E}^n(M,N)\ar[r]^{i^*}	&\hat{E}^n(M)\ar[r]^{j^*}	&  \hat{E}^n(N)\ar@{->}`r[rd]`[rd]^{\delta_2}`[llldd]`[lldd][lldd]&\\
& & && \\
& E^{n+1}(M,N)\ar[r]^{i^*}		&E^{n+1}(M)\ar[r]^{j^*}	&	E^{n+1}(N)\ar@{-}[r]&
 }
 $$
Here, the coboundary maps are defined as compositions
$\delta_1: \hat{E}^{n-1}_{flat}(N) \overset{\delta}{\longrightarrow} \hat{E}^n_{flat}(M,N)\overset{\subset}{\longrightarrow} \hat{E}^n(M,N)$ and
$\delta_2: \hat{E}^n(N)\overset{I}{\longrightarrow} E^n(N)\overset{\delta}{\longrightarrow} E^{n+1}(M,N)$.
\end{theorem}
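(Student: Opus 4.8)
The statement is a standard Mayer--Vietoris/long-exact-sequence assertion, so the strategy is to verify exactness at each of the (infinitely many, but three-periodic) spots. The exact sequence naturally breaks into three recurring pieces: (a) the flat segment $\hat E^{n-1}_{flat}(M,N)\to\hat E^{n-1}_{flat}(M)\to\hat E^{n-1}_{flat}(N)$, which, since $\hat E_{flat}^*\cong E\mathbb R/\mathbb Z^{*-1}$ is a genuine generalized cohomology theory by the cited results of Bunke--Schick and Hopkins--Singer, is just a restriction of the long exact sequence of the pair $(M,N)$ for that theory; (b) the differential segment $\hat E^n(M,N)\xrightarrow{i^*}\hat E^n(M)\xrightarrow{j^*}\hat E^n(N)$; and (c) the topological segment $\hat E^n(N)\xrightarrow{\delta_2}E^{n+1}(M,N)\xrightarrow{i^*}E^{n+1}(M)\xrightarrow{j^*}E^{n+1}(N)$, which again is a piece of the honest long exact sequence of the pair for $E$. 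The connecting maps need to be shown well-defined first: for $\delta_1$ one checks the coboundary $\delta\colon\hat E^{n-1}_{flat}(N)\to\hat E^n_{flat}(M,N)$ of theory (a) lands in the flat subgroup (immediate from naturality of $R$), and $\delta_2$ is a composite of two maps already defined.

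\textbf{Exactness at the genuinely new spots.} The real work is exactness of the differential segment at $\hat E^n(M)$ and at $\hat E^n(N)$, together with the two transitions (at $\hat E^{n-1}_{flat}(N)$ via $\delta_1$, and at $\hat E^n(M,N)$ just after $\delta_1$, and at $\hat E^n(N)$ via $\delta_2$). First I would record the easy inclusions $\im\subset\ker$: $j^*i^*=0$ because the relative cochains, forms, and maps restrict to zero on $N$; $i^*\circ\delta_1=0$ because $\delta_1$ factors through $\hat E^n_{flat}(M,N)$ and $i^*$ postcomposed with $R$ picks out a form that is already zero; $\delta_2\circ j^* = 0$ from $I\circ j^*=j^*\circ I$ and $\delta\circ j^*=0$ in $E$; and $i^*\circ\delta_2=0$ from the corresponding fact in $E$. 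For $\ker\subset\im$ the standard move is a diagram chase combining the five-lemma-type data packaged in Theorem \ref{thm:axiom-verific} applied simultaneously to $(M,N)$, to $M$, and to $N$, together with the long exact sequences in $E$ and in $\Omega^*$ (the sequence \eqref{eqn:KES-forms} gives the relevant exactness for forms). Concretely: given $[c,\omega,h]\in\hat E^n(M)$ with $j^*[c,\omega,h]=0$, Proposition \ref{prop:homotopy} lets me normalize the triple on $N$ so that $c|_N=\mathrm{const}$, $\omega|_N$ is exact, and $h|_N$ is a coboundary; then \eqref{eqn:KES-forms} and observation (ii) in the proof of Theorem \ref{thm:axiom-verific} let me further adjust $\omega$ to a relative form and $h$ (using Lemma \ref{lem:cob-trans}) to a relative cochain, producing a preimage in $\hat E^n(M,N)$. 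Exactness at $\hat E^n(N)$ (kernel of $\delta_2$ equals image of $j^*$) is the most delicate: if $I[c,\omega,h]=0$ in $E^n(N)$ maps to zero under $\delta\colon E^n(N)\to E^{n+1}(M,N)$, then $[c]\in E^n(N)$ lifts to $E^n(M)$, and I would lift the form $\omega$ over \eqref{eqn:KES-forms} and then lift $h$ using Lemma \ref{lem:gluing}/Lemma \ref{lem:cob-trans} to build an element of $\hat E^n(M)$ restricting to the given one, invoking Proposition \ref{prop:homotopy} to absorb the homotopy ambiguity.

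\textbf{Main obstacle.} The delicate point is not any single spot but the bookkeeping that glues the three ambient long exact sequences ($\hat E_{flat}^*$, $E^*$, and $\Omega^*$) into one: each differential cohomology class is a triple $(c,\omega,h)$ whose three components live in three different exact sequences that interact only through the defining relation $\delta h=\omega-c^*\iota_n$, and one must lift/adjust the three pieces compatibly and simultaneously rather than one at a time. In particular, after lifting $c$ topologically the cocycle $c^*\iota_n$ changes by a coboundary only up to the ambiguity measured by the previous term of the $E$-sequence, and Proposition \ref{prop:homotopy} together with Lemma \ref{lem:cob-trans} is exactly the tool that shows this ambiguity does not obstruct the lift. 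I would therefore organize the proof around a single lemma of the form ``a relative differential cochain on $(M,N)$ exists extending given compatible data on $M$ and $N$,'' proved by combining \eqref{eqn:KES-forms}, Lemma \ref{lem:gluing}, Lemma \ref{lem:cob-trans}, Proposition \ref{prop:homotopy}, and the axioms of Theorem \ref{thm:axiom-verific}, and then deduce all the $\ker\subset\im$ statements as corollaries by the usual diagram chase.
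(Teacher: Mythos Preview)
Your overall plan is sound: the flat and topological portions are immediate from the cited exact sequences for $\hat E_{flat}^*$ and $E^*$, and the real content is exactness at $\hat E^n(M)$ and $\hat E^n(N)$. However, your execution at these two spots differs from the paper's, and in each case a point needs more care than your sketch indicates.

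At $\hat E^n(M)$, your phrase ``Proposition \ref{prop:homotopy} lets me normalize the triple on $N$ so that $c|_N=\mathrm{const}$'' hides the essential step: the null-homotopy $C\colon c|_N\simeq\mathrm{const}$ lives only on $I\times N$, and Proposition \ref{prop:homotopy} needs a homotopy on all of $M$. The paper makes this the centerpiece of the argument: it glues the equivalence data $(C,H)$ on $I\times N$ with the original $(c,h)$ on $0\times M$ via Lemma \ref{lem:gluing} to get data on $0\times M\cup I\times N$, and then explicitly uses that $N\subset M$ is a (smooth) cofibration to obtain a retraction $r\colon I\times M\to 0\times M\cup I\times N$; pulling back along $r$ produces an equivalence on $M$ to a triple lying in the relative group. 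Without naming the cofibration/HEP step your normalization cannot be carried out.

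At $\hat E^n(N)$, your componentwise lifting has a gap: \eqref{eqn:KES-forms} is surjective on all forms, but a \emph{closed} $\omega\in\Omega^n_{cl}(N;V)$ need not extend to a closed form on $M$ (that would require surjectivity of $j^*$ on de~Rham cohomology). The paper avoids representatives altogether by a much cleaner chase using only Theorem \ref{thm:axiom-verific}: given $\hat x$ with $\delta(I(\hat x))=0$, lift $I(\hat x)$ to $y\in E^n(M)$ by exactness in $E$, write $y=I(\hat y)$ by surjectivity of $I$, observe $j^*\hat y-\hat x\in\ker I=\im a$, say $=a(\Theta)$; now only the (not-necessarily-closed) $(n-1)$-form $\Theta$ needs to be extended via \eqref{eqn:KES-forms} to $\bar\Theta$, and $\hat y-a(\bar\Theta)$ is the desired preimage. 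This is both shorter and sidesteps the closed-form extension problem, so it is worth adopting in place of your three-component lift.
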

\begin{proof}
The exactness at $\hat{E}^n(M,N)$, $\hat{E}^{n-1}_{flat}(N)$, and at $E^{n+1}(M,N)$ is easy to check from the exact sequences of pairs of $\hat{E}^*_{flat}$ and $E^*$. It is also straightforward to see that the composition of two successive maps in the sequence is zero.\\

Exactness at $\hat{E}^n(M)$:
If $(c,\omega,h)|_N\sim (const,0,0)$ we have $j^*\omega=0$ and
there exist $C: I\times N \rightarrow E_n: c\simeq const$ and $H\in C^{n-1}_{s}(I\times N;V)$ with $i_0^*H=h|_N, i_1^*H=0$ and $\delta H=-C^*\iota_n$. As above, we may assume that $C|_{[0,1/4]\times N}=c|_N\circ pr$ and $H|_{[0,1/4]\times N}=pr^*h|_N$. By Lemma \ref{lem:gluing} applied to $0\times M\cup [0,\frac14[\times N$, $]0,1]\times N$ and $pr^*h$ for $pr: 0\times M\cup [0,\frac14[\times N\rightarrow M$ and $H$ we may find $\tilde{H}\in C^{n-1}_{s}(0\times M\cup I\times N)$ with
$ \tilde{H}|_{0\times M}=h, \; \tilde{H}|_{I\times N}=H $.
Because $0\times N\subset 0\times M\cup I\times N$ is closed we may define $\tilde{C}$ by gluing $C$ and $c$. As above, using Remark \ref{rem:subdivision}, it may be verified that $\delta\tilde{H}=pr^*\omega-\tilde{C}^*\iota_n$.
Since $N\subset M$ is a cofibration we may find a smooth retraction\footnote{Start with a continuous retraction and deform to a smooth map relative to the closed subset $0\times M\cup I\times N$.}
$$ r: I\times M \rightarrow (0\times M)\cup (I\times N) $$
Pulling back along $r$ we obtain $\delta r^*\tilde{H}=pr^*\omega - (\tilde{C}\circ r)^*\iota_n$ and thus an equivalence in $\hat{E}^n(M)$ from $(c,\omega,h)$ to $((\tilde{C}\circ r)|_{1\times M},\omega, r^*\tilde{H}|_{1\times M})$ which may be viewed as an element of $\hat{E}^n(M,N)$.\\

Exactness at $\hat{E}^n(N)$:
Suppose $0=\delta_2(\hat{x})=\delta(I(\hat{x}))$. Then $I(\hat{x})$ has a preimage $y\in E^n(M)$ under $j^*: E^n(M)\rightarrow E^n(N)$. By the surjectivity of $I$ we may write $y=I(\hat{y})$ and then
$ j^*\hat{y}-\hat{x}\in \ker(I)=\mathrm{im}(a) $ so that we may write $a(\Theta)=j^*\hat{y}-\hat{x}$ by Theorem \ref{thm:axiom-verific}. $\Theta\in \Omega^{n-1}(N;V)$ may be extended by (\ref{eqn:KES-forms}) to a differential form $\overline{\Theta}\in \Omega^{n-1}(M;V)$. Then
$$ j^*( \hat{y}-a(\overline{\Theta}) )=j^*\hat{y}-a(\Theta)=\hat{x} $$
\end{proof}

\subsection{Functorial Properties}
A smooth map $f: (M_0,N_0) \rightarrow (M,N)$ induces a \emph{pullback}
	\begin{align}
	f^*=\hat{H}^n(f; (E,\iota, V)): \hat{H}^n((M,N);(E,\iota, V))&\longrightarrow \hat{H}^n((M_0,N_0);(E,\iota, 			V)),\\ [c,\omega,h]&\longmapsto [c\circ f, f^*\omega, f^*h]\notag
	\end{align}
and a homomorphism of graded vector spaces $\mu: V\rightarrow W$ induces a \emph{map of the coefficients}
	\begin{align}
	\mu=\hat{H}^n((M,N); (E, \mu)): \hat{H}^n((M,N);(E,\iota, V))&\longrightarrow \hat{H}^n((M,N);(E,			\mu(\iota), W)),\\ [c,\omega,h]&\longmapsto [c,\mu(\omega),\mu(h)]\notag
	\end{align}
For a pointed map $\phi: E\rightarrow F$ and a cocycle $\iota^F\in Z^n(F,pt; V)$ we have a \emph{transfer} map
	\begin{align}
	\phi_*=\hat{H}^n((M,N); (\phi, V)): \hat{H}^n((M,N);(E,\phi^*\iota^F, V))&\longrightarrow \hat{H}^n((M,N);(F,			\iota^F, V)),\label{map:transfer}\\ [c,\omega,h]&\longmapsto [\phi\circ c, \omega, h]\notag
	\end{align}
Also, for every reduced cochain $\Theta\in C^{n-1}(E,pt;V)$ with $\delta\Theta=\iota-\iota'$ we have an isomorphism
	\begin{align}
\hat{H}^n(\Theta): \hat{H}^n((M,N);(E,\iota,V))&\overset{\cong}{\longrightarrow} \hat{H}^n((M,N);(E,\iota',V)),\label{map:change-of-cocycle}\\
	[c,\omega,h]&\longmapsto [c,\omega,h+c^*\Theta]\notag
	\end{align}
	By Lemma \ref{lem:cob-trans} this isomorphism depends only on the coset $\Theta+B^{n-1}(E,pt;V)$. Notice that if $N=\emptyset$, then it is not necessary that $\Theta$ be a \emph{reduced} cocycle. It is easily checked that these maps are in fact well-defined.\\

Combining (\ref{map:transfer}) and (\ref{map:change-of-cocycle}), we define for a pointed map $\phi: E\rightarrow F$ together with a reduced cochain $\Theta\in C^{n-1}(E,pt;V)$ with $\delta\Theta=\iota^E-\phi^*\iota^F$ the map $(\phi;\Theta)$ as the composition $\phi_*\circ \hat{H}^n(\Theta)$. Explicitly, $(\phi; \Theta)[c,\omega,h]=[\phi\circ c, \omega, h+c^*\Theta]$.
Notice the \emph{composition rule}
\begin{equation}
(\phi; \Theta)\circ (\psi; \kappa)=(\phi\circ\psi; \kappa + \psi^*\Theta)
\end{equation}
Moreover, if $\Phi: \phi_0\simeq \phi_1$ is a homotopy we have by Proposition \ref{prop:homotopy}
\begin{equation}
(\phi_0; \Theta)=\left(\phi_1; \Theta-\int_I \Phi^*\iota^F\right)
\end{equation}

\subsection{The Canonical Maps $\lambda$, $\chi$}\label{ssec:canonical-maps}
First, note that we have a natural map $\lambda$ given by
	\begin{align*}
		\hat{H}^n((M,N); (E,\iota^E, V)) \times \hat{H}^n((M,N); (F,\iota^F, W))&\rightarrow \hat{H}^n((M,N); 				(E\times F; pr_1^* \iota^E\oplus pr_2^*\iota^F, V\oplus W))\\
		([c_0,\omega_0,h_0], [c_1,\omega_1,h_1])&\mapsto [(c_0,c_1), \omega_0\oplus\omega_1,    
h_0\oplus h_1]
	\end{align*}
$\lambda$ clearly is associative:
$\lambda\circ(\lambda\times id)=\lambda\circ(id\times\lambda)$.
Also
\begin{equation}
\lambda\circ\left((\phi;\Theta)\times(\psi; \kappa)\right)=(\phi\times \psi; pr_1^*\Theta+pr_2^*\kappa )\circ\lambda
\end{equation}

Next, using the Acyclic Models Theorem, pick a natural chain homotopy $\delta B(\omega_0\otimes \omega_1)+Bd(\omega_0\otimes \omega_1)=\omega_0\wedge \omega_1 - \omega_0\cup \omega_1$,
unique up to natural chain homotopy. Define a natural map
	\begin{align*}
		\chi: \hat{H}^n(M; (E,\iota^E, V))\times \hat{H}^m(M; (F,\iota^F, W))&\rightarrow
		\hat{H}^{n+m}(M; (E\times F, \iota^E \times \iota^F, V\otimes W))\\
		\left([c_0,\omega_0, h_0], [c_1,\omega_1,h_1]\right)\mapsto &[(c_0,c_1), \omega_0\wedge \omega_1,h]
	\end{align*}
where $h=B(\omega_0\otimes\omega_1)+h_0\cup\omega_1 + (-1)^{|\omega_0|}\omega_0\cup h_1 - h_0\cup \delta h_1$.
Notice that by Lemma \ref{lem:cob-trans} we could have replaced $h_0\cup \delta h_1$ with $(-1)^{|\omega_0|} \delta h_0 \cup h_1$, or half of both, since these differ only by a coboundary.
Since any two choices of $B$ differ by a coboundary, $\chi$ is independent of it, by
Lemma \ref{lem:cob-trans}. Moreover, we have associativity
\begin{equation}
\chi\circ (\chi\times id)=\chi\circ (id\times \chi)\label{eqn:assoc-chi}
\end{equation}
For the proof of (\ref{eqn:assoc-chi}),
suppose $\hat{x}=[c_0,\omega_0,h_0]\in \hat{H}^l(M; (E,\iota^E,U))$, $\hat{y}=[c_1,\omega_1,h_1]\in \hat{H}^m(M; (F,\iota^F,V))$, and $\hat{z}=[c_2,\omega_2,h_2]\in \hat{H}^n(M; (G,\iota^G,W))$. Then $\chi(\hat{x},\chi(\hat{y},\hat{z}))$ equals
\begin{align*}
[&(c_0,c_1,c_2), \omega_0\wedge (\omega_1\wedge \omega_2),
B (\omega_0\otimes(\omega_1\wedge \omega_2))+(-1)^{|\omega_0|}\omega_0\cup B(\omega_1\otimes \omega_2)\\
&+h_0\cup \omega_1\cup \omega_2+(-1)^{|\omega_0|} \omega_0\cup h_1\cup \omega_2 + (-1)^{|\omega_0|+|\omega_1|}\omega_0\cup\omega_1\cup h_2\\
&-h_0\cup \delta h_1\cup \omega_2 - h_0\cup \omega_1 \cup \delta h_2 - (-1)^{|\omega_0|}\omega_0\cup h_1\cup \delta h_2+h_0\cup\delta h_1\cup \delta h_2]
\end{align*}
while $\chi(\chi(\hat{x},\hat{y}),\hat{z})$ equals, using due to the remark above $(-1)^{|\omega_0|} \delta h_0\cup h$ instead of $h_0\cup \delta h$,
\begin{align*}
[&(c_0,c_1,c_2), (\omega_0\wedge \omega_1)\wedge\omega_2,
B((\omega_0\wedge \omega_1)\otimes \omega_2)+B(\omega_0\otimes\omega_1)\cup\omega_2\\
&+h_0\cup\omega_1\cup \omega_2+(-1)^{|\omega_0|}\omega_0\cup h_1\cup \omega_2+(-1)^{|\omega_0|+|\omega_1|}\omega_0\cup \omega_1 \cup h_2\\
&-h_0\cup\delta h_1\cup \omega_2-(-1)^{|\omega_0|+|\omega_1|}\delta h_0\cup \omega_1\cup h_2
-(-1)^{|\omega_0|+|\omega_1|}\omega_0\cup \delta h_1\cup h_2+(-1)^{|\omega_0|+|\omega_1|}\delta h_0\cup \delta h_1\cup h_2]
\end{align*}
The two chain homotopies $B((\omega_0\wedge\omega_1)\otimes\omega_2)+B(\omega_0\otimes\omega_1)\cup \omega_2$ and
$B(\omega_0\otimes(\omega_1\wedge\omega_2))+(-1)^{|\omega_0|}\omega_0\cup B(\omega_1\otimes \omega_2)$ are chain homotopic by Acyclic Models. On closed forms $\omega_0,\omega_1,\omega_2$ they therefore differ only by a coboundary $\delta E$.
Lemma \ref{lem:cob-trans} completes the proof since
the third components of the above two elements differ therefore only by the coboundary of
$$ (-1)^{|\omega_0|+|\omega_1|}h_0\cup \omega_1\cup h_2 + h_0\cup \delta h_1\cup h_2+(-1)^{|\omega_1|}\omega_0\cup h_1 \cup h_2 + E$$
Note also that if $\phi: E\rightarrow \tilde{E},\; \psi: F\rightarrow\tilde{F}$ 
and $\delta\Theta=\iota^E-\phi^*\iota^{\tilde{E}}$, $\delta \kappa=\iota^F-\psi^*\iota^{\tilde{F}}$ we have
\begin{equation}\label{eqn:comp-chi-maps}
\chi\circ \left( (\phi; \Theta)\times(\psi; \kappa) \right) = (\phi\times\psi; \Theta\times\delta\kappa + \Theta\times\iota^F+\iota^E\times\kappa)\circ \chi
\end{equation}

\subsection{Abelian Group Structure}\label{ssec:abelian-group}


Pick maps $\alpha_n: E_n\times E_n\rightarrow E_n$ representing addition
and cochains $A_n\in C^{n-1}(E_n\times E_n;V)$ with $\delta A_n= pr_1^*\iota_n + pr_2^*\iota_n - \alpha_n^*\iota_n$ (possible since the cohomology Chern character is additive).
We would like to define addition as the following composition
\[\xymatrix{
\hat{H}^n((M,N); (E_n,\iota_n, V)) \times \hat{H}^n((M,N); (E_n,\iota_n, V))\ar[d]^{\alpha\circ\lambda}\\
\hat{H}^n((M,N); (E_n\times E_n,pr_1^*\iota_n+ pr_2^*\iota_n, V))\ar[d]^{\hat{H}^n(A_n)}\\
\hat{H}^n((M,N); (E_n\times E_n, \alpha_n^*\iota_n, V))\ar[d]^-{(\alpha_n)_*}\\
\hat{H}^n((M,N); (E_n, \iota_n,V))
}\]
using the maps from the previous section. Explicitly,
\[
[c_0,\omega_0, h_0]+[c_1,\omega_1,h_1]= [\alpha_n(c_0,c_1), \omega_0+\omega_1, h_0+h_1+(c_0,c_1)^*A_n]
\]
The natural transformations $a$, $R$, and $I$ are then group homomorphisms.
However, in general, this operation won't be associative.
From the $E_\infty$-structure of the infinite loop space $E_n$
it is possible however
to make \emph{canonical} choices for the $\alpha_n$ and $A_n$
that will satisfy additional \emph{coherence properties} like that
\begin{equation}\label{coherence:assoc}
pr_{12}^*A_n+(\alpha_n\times id)^*A_n-\int_I H_n^*\iota_n - pr_{23}^*A_n-(id\times \alpha_n)^*A_n
\end{equation}
is a coboundary
for a certain canonical homotopy $H_n: \alpha_n(\alpha_n\times id)\simeq \alpha_n(id\times\alpha_n)$
which means associativity, by Proposition \ref{prop:homotopy}.
In Remark \ref{rem:general-case} we show how to make these canonical choices.
Since the verification of the coherence properties are lengthy, since the general case has already been dealt with in \cite{hopkins_singer}, and since this paper is concerned mainly with the rationally even case anyway, we allow ourselves the following simplification:

\begin{assumption}\label{as:rat-even}
From here on, $E$ is rationally even, i.e. $\pi_i(E)\otimes\bbQ=0$, for all odd $i$.
\end{assumption}

Observe that then for any graded vector space $V$ which is zero in odd degrees we have
 $H^*(E_n;V)=0,\; H^*(E_n\times E_m;V)$ for $n,m$ even and $*$ odd, and similarly for higher products
(use for instance Lemma 3.8 in \cite{bunke_schick_uniqueness}). For $n$ even, let $\alpha_n: E_n\times E_n\rightarrow E_n$ be \emph{any} map representing addition
$$ [f]+[g]=[\alpha_n\circ(f,g)],\hspace{1cm} f,g:X\rightarrow E_n$$
Let $\varphi_n: \Sigma(E_n\times E_n) \rightarrow (\Sigma E_n) \times (\Sigma E_n) \xrightarrow{\varepsilon_n \times \varepsilon_n} E_{n+1}\times E_{n+1}$ be the structure map of the spectrum $E_n\times E_n$, basepointed by the pair of basepoints.
To not burden the notation any further, canonical maps like
$
(\Sigma X)\times Y \longrightarrow  \Sigma(X\times Y),
(\Omega X)\times Y \longrightarrow \Omega(X\times Y),
\Omega(X\times Y)	\overset{\approx}{\longrightarrow} \Omega(X)\times \Omega(Y)
$ will be left out in the following formulas.
Since we have an $\Omega$-spectrum, we may define $\alpha_{n-1}$ by requiring
\begin{equation}\label{eqn:req-3}
\varepsilon_{n-1}\circ \left(\Sigma\alpha_{n-1}\right)=\alpha_n\circ \varphi_{n-1}
\end{equation}
Since suspension is linear, $\alpha_{n-1}$ again represents addition.
For all even $n$, write
\begin{equation}\label{eqn:choice-of-An}
\delta A_n = (pr_1^*\iota_n+pr_2^*\iota_n)-\alpha_n^*\iota_n
\end{equation}
for \emph{some} reduced cochain $A_n\in \tilde{C}^{n-1}(E_n\times E_n;V)$. By Assumption \ref{as:rat-even}, any two
choices differ by a coboundary. We \emph{extend} the $A_n$ to odd indices by setting\begin{equation}\label{choice-of-An-1}
A_{n-1} := -\int_{S^1} \varphi_{n-1}^*A_n\in \tilde{C}^{n-2}(E_{n-1}\times E_{n-1};V)  \hspace{1cm}\text{($n$ even)}
\end{equation}
By (\ref{eqn:coboundary-integral-formula}) and (\ref{eqn:choice-fund-cocycle}), equation (\ref{eqn:choice-of-An}) holds then also for $n-1$.

\begin{theorem}
The binary operation $\alpha$ endows $\hat{H}^n((M,N); (E_n,\iota_n, V))$ with the structure of an Abelian group with zero element $[const,0,0]$.
\end{theorem}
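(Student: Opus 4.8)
\noindent\emph{Proof sketch (plan).}
The plan is to verify the four abelian-group axioms directly for the explicit operation
$[c_0,\omega_0,h_0]+[c_1,\omega_1,h_1]=[\alpha_n(c_0,c_1),\ \omega_0+\omega_1,\ h_0+h_1+(c_0,c_1)^*A_n]$:
that $[const,0,0]$ is a two-sided unit, that every element has an inverse, that the operation is associative, and that it is commutative. (Well-definedness of $+$ is already contained in the constructions of Section~\ref{ssec:canonical-maps} together with the fact that, by (\ref{eqn:choice-of-An}), (\ref{choice-of-An-1}) and Assumption~\ref{as:rat-even}, any two admissible choices of $A_n$ differ by a coboundary.) Each axiom is an identity among composites of the maps $\alpha_n$ that holds \emph{up to homotopy} because the infinite loop space $E_n$ is a homotopy-commutative $H$-group, so that $[X,E_n]$ is an abelian group naturally in $X$; thus there are based homotopies $\alpha_n\circ(\id,const)\simeq\id_{E_n}$, $\alpha_n\circ\mathrm{swap}\simeq\alpha_n$, $\alpha_n\circ(\alpha_n\times\id)\simeq\alpha_n\circ(\id\times\alpha_n)$, and $\alpha_n\circ(\id,\nu_n)\simeq const$ for a based map $\nu_n\colon E_n\to E_n$ representing $-\id_{E_n}$. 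Being basepoint-preserving, each becomes a homotopy rel $N$ after composing with a representative $c\colon(M,N)\to(E_n,pt)$.

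In every case the argument has the same shape. Using only the associativity of $\lambda$, the composition rule for the maps $(\phi;\Theta)$ of Section~\ref{ssec:canonical-maps}, and the defining formula for $+$, one rewrites both sides of the desired identity in the normal form $[\,g\circ(c_\bullet),\ \omega_\bullet,\ \sum_i h_i+(c_\bullet)^*P\,]$, with the same ``target'' composite $g\colon E_n\times\cdots\times E_n\to E_n$ and with $P$ a cochain assembled from the $A_n$. Proposition~\ref{prop:homotopy}, applied to $\Phi\circ(c_\bullet\times\id)$ for the relevant homotopy $\Phi$ between the two ``source'' composites, then identifies the two maps at the cost of subtracting $(c_\bullet)^*\int_I\Phi^*\iota_n$ from the third slot (pullback formula (\ref{eqn:pullback-formula})). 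By Lemma~\ref{lem:cob-trans} it now suffices to check that the two third components differ by a coboundary, i.e.\ that a certain ``error cochain'' -- in the associativity case it is precisely (\ref{coherence:assoc}) -- pulls back along $c_\bullet$ to a coboundary on $(M,N)$. A short computation with (\ref{eqn:choice-of-An}) and the coboundary formula (\ref{eqn:coboundary-integral-formula}) shows that this error cochain is a \emph{reduced cocycle} of degree $n-1$ on a finite power $E_n\times\cdots\times E_n$.

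For $n$ even, this cocycle lies in an odd-degree $V$-cohomology group of a power of $E_n$, which vanishes by the observation recorded right after Assumption~\ref{as:rat-even} ($V$ is concentrated in even degrees); hence it is a reduced coboundary, and pulling back along $c_\bullet$ -- which stays reduced since $c_\bullet(N)=pt$, and becomes smooth after restriction to smooth simplices -- produces a coboundary in $C^{*}_{s}(M,N;V)$, so Lemma~\ref{lem:cob-trans} closes the even case. The inverse axiom is softer and needs no vanishing: one takes the third component of $-[c,\omega,h]$ to be $-h-c^*(\id,\nu_n)^*A_n+c^*\int_I\Psi^*\iota_n$ with $\Psi\colon\alpha_n(\id,\nu_n)\simeq const$, checks from (\ref{eqn:choice-of-An}) that $(\nu_n\circ c,-\omega,\cdot)$ is then a legitimate differential cocycle, and observes that in $[c,\omega,h]+(-[c,\omega,h])$ the homotopy-correction from $\Psi$ cancels the $A_n$-term identically, leaving $[const,0,0]$; commutativity then upgrades this to a two-sided inverse.

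For $n$ odd, write $n=m-1$ with $m$ even. Via the spectrum structure maps -- the homeomorphism $\varepsilon_{m-1}^{adj}\colon E_{m-1}\xrightarrow{\approx}\Omega E_m$ and equation (\ref{eqn:req-3}) -- the map $\alpha_{m-1}$ corresponds to $\Omega\alpha_m$, and the four homotopies above can be chosen as loopings of the corresponding ones for $\alpha_m$; together with $\iota_{m-1}=\int_{S^1}\varepsilon_{m-1}^*\iota_m$ and the choice $A_{m-1}=-\int_{S^1}\varphi_{m-1}^*A_m$ of (\ref{choice-of-An-1}), this exhibits the odd-degree error cochains as $\pm\int_{S^1}$ of the even-degree ones, modulo an honest coboundary; and $\int_{S^1}$ carries coboundaries to coboundaries by (\ref{eqn:pullback-formula}) and (\ref{eqn:coboundary-integral-formula}). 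This reduces the odd case to the even one. I expect the main obstacle to be exactly this reduction: one must carefully unwind the ``canonical maps'' suppressed in the notation (such as $(\Sigma X)\times Y\to\Sigma(X\times Y)$ and $\Omega(X\times Y)\xrightarrow{\approx}\Omega X\times\Omega Y$) to be sure the odd-index error cochains really are integrals of the even-index ones -- note that this is genuinely necessary, since for $m$ even $E_{m-1}$ is rationally concentrated in odd degrees and its even $V$-cohomology need not vanish. Everything else -- the normal-form manipulations and the cocycle computations -- is routine once Proposition~\ref{prop:homotopy} and Lemma~\ref{lem:cob-trans} are in hand.
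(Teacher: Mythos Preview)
Your approach is correct and essentially matches the paper's own proof: reduce each axiom to the statement that a certain ``error cochain'' on a power of $E_n$ is a coboundary, verify for $n$ even that it is a reduced cocycle and hence a coboundary by the rational-evenness assumption, and for $n$ odd choose $\alpha_{n-1}$, $\nu_{n-1}$, and the relevant homotopies via the structure maps (equations (\ref{eqn:req-1})--(\ref{eqn:req-3})) so that the odd error cochain is $-\int_{S^1}$ (through $\varepsilon_{n-1}$, $\varphi_{n-1}$) of the even one. The paper carries this out in detail only for the inverse axiom and explicitly says the remaining axioms are proved by the same technique.

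One minor difference worth noting: for the inverse you choose the third component of $-[c,\omega,h]$ so that the error term vanishes on the nose --- in the paper's notation this is the particular choice $N_n=-(\id,\nu_n)^*A_n+\int_I\Psi^*\iota_n$ --- whereas the paper picks an arbitrary $N_n$ with $\delta N_n=-\iota_n-\nu_n^*\iota_n$ (well-defined up to coboundary for $n$ even, extended to odd $n$ by $N_{n-1}=-\int_{S^1}\varepsilon_{n-1}^*N_n$) and then runs the full cocycle/vanishing/integration pattern on the resulting error $N_n+(\id,\nu_n)^*A_n-\int_I\Psi^*\iota_n$. Your shortcut is genuine and works in every degree without the odd-to-even reduction, but it buys only so much: you still need the vanishing argument for unit, associativity, and commutativity, and you invoke commutativity to upgrade to a two-sided inverse, so the reduction step remains indispensable for the theorem as a whole.
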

\begin{proof}
We only consider negation. For $n$ even, write $\nu_n: E_n \rightarrow E_n$ for a representative map
of the negation in $E^n$. Write also $\delta N_n=-\iota_n-\nu_n^*\iota_n$ for $N_n\in \tilde{C}^{n-1}(E_n;V)$ which is possible since $ch$ preserves negation. Extend to odd indices as above by requiring 
\begin{equation}\label{eqn:req-1}
\varepsilon_{n-1}\circ \left(\Sigma \nu_{n-1}\right)=\nu_n\circ \varepsilon_{n-1}
\end{equation}
and by
setting $N_{n-1}=-\int_{S^1} \varepsilon_{n-1}^*N_n\in \tilde{C}^{n-2}(E_n;V)$. Then again $ \delta N_{n-1}=-\iota_{n-1}-\nu_{n-1}^*\iota_{n-1}$
and we may define the \emph{negation} $\nu$ in differential cohomology as $ \nu([c,\omega,h])=[\nu_n\circ c, -\omega, -h+c^*N_n]$.
Let us show that $\alpha(\hat{x},\nu(\hat{x}))=0$ for any $\hat{x}=[c,\omega,h]$: For $n$ even let
$H_n: \alpha_n\circ (id,\nu_n)\simeq const$ be a homotopy and extend by requiring
\begin{equation}\label{eqn:req-2}
\varepsilon_{n-1}\circ \left(\Sigma H_{n-1}\right)=H_n\circ (id_I\times \varepsilon_{n-1})
\end{equation}
Now, we compute
\begin{align*}
 \alpha(\hat{x},\nu(\hat{x}))&=\alpha([c,\omega,h],[\nu_n\circ c, -\omega, -h+c^*N_n])
 						=[\alpha_n\circ(c,\nu_n\circ c), 0, c^*N_n+(c,\nu_n\circ c)^*A_n]\\
						&=\left[const, 0, c^*N_n+(c,\nu_n\circ c)^*A_n-\int_I (H_n\circ (c\times id_I))^*\iota_n\right]
\end{align*}
To show that $\alpha(\hat{x},\nu(\hat{x}))=0=[const,0,0]$
it remains by Lemma \ref{lem:cob-trans} to hat that $c^*N_n+(c,\nu_n\circ c)^*A_n-c^*\int_I H_n^*\iota_n \in C^{n-1}((M,N);V)$ is a coboundary. For this it suffices to show that $N_n+(id,\nu_n)^*A_n - \int_I H_n^*\iota_n \in \tilde{C}^{n-1}(E_n;V)$ is a coboundary. For $n$ even it is, by Assumption \ref{as:rat-even}, enough to show that this element is a cocycle:
\begin{align*}
\delta &\left(N_n+(id,\nu_n)^*A_n - \int_I H_n^*\iota_n  \right)=-\nu_n^*\iota_n-\iota_n+(id,\nu_n)^*\delta A_n-\delta\int_I H_n^*\iota_n\\
&=-\nu_n^*\iota_n-\iota_n + (id,\nu_n)^*(pr_1^*\iota_n+pr_2^*\iota_n-\alpha_n^*\iota_n)+ (i_0)^*H_n^*\iota_n-(i_1)^*H_n^*\iota_n\\
&=-\nu_n^*\iota_n-\iota_n - (id,\nu_n)^*(\alpha_n^*\iota_n)+\iota_n+\nu_n^*\iota_n+ (\alpha_n\circ(id,\nu_n))^*\iota_n-const^*\iota_n=0
\end{align*}
Applying $-\int_{S^1} \varepsilon_{n-1}^*: \tilde{C}^{n-1}(E_{n-1};V)\rightarrow \tilde{C}^{n-2}(E_{n-2};V)$ it then follows that
\begin{align*}
\text{coboundary}&=\int_{S^1} \varepsilon_{n-1}^* \left(-N_n-(id,\nu_n)^*A_n + \int_I H_n^*\iota_n\right)\\
			&= N_{n-1}-\int_{S^1} \left(\Sigma(id,\nu_{n-1})\right)^*\varphi_{n-1}^* A_n+\int_{S^1} \int_I (id_I\times  \varepsilon_{n-1})^*H_n^* \iota_n\\
			&= N_{n-1}+(id,\nu_{n-1})^*A_{n-1}-\int_I \int_{S^1} (\Sigma H_{n-1})^*\varepsilon_{n-1}^* \iota_n\\
			&=N_{n-1}+(id,\nu_{n-1})^*A_{n-1}-\int_I H_{n-1}^* \iota_{n-1}
\end{align*}
Changing the order of the integrals has gave a minus sign. The proof
that $[const,0,0]$ is a zero element, of associativity and commutativity uses the same technique, and
will be omited.
\end{proof}

\begin{remark}\label{rem:general-case}
Let $E$ be an $\Omega$-spectrum. We can make \emph{canonical} choices, e.g.
$$ \alpha_n: E_n\times E_n\approx \Omega E_{n+1}\times \Omega E_{n+1} \overset{\kappa}\longrightarrow \Omega  E_{n+1} \approx E_n$$
where the map $\kappa$ is concatenation, which is compatible with the integration of cochains up to explicit coboundary terms.
For $e=\pi: \Delta^2 \rightarrow \Delta^1, (t_0,t_1,t_2)\mapsto (t_0+t_1/2, t_2+t_1/2)$ we have, denoting by $[a,b]$ the $1$-simplex
$ \Delta^1\rightarrow [a,b], (x,y)\mapsto  ya+xb$, the relation $\partial e=[0,\frac12]-[0,1]+[\frac12,1]$
so that
\begin{align*}
\int_{[0,1]}u &= \int_{[0,\frac12]}u+\int_{[\frac12,1]}u-\int_{\partial e}u
		= \int_{[0,\frac12]}u+\int_{[\frac12,1]}u+\left((-1)^{|e|}\delta\int_e u - \int_e \delta u\right)
\end{align*}
This may be used to show that $\alpha_n^*\iota_n$ and $pr_1^*\iota_n+pr_2^*\iota_n$ differ by
the coboundary of the \emph{canonical} cochain
$$ A_n=(\varepsilon_{n}^{adj}\times \varepsilon_{n}^{adj})^*\int_e (\kappa^{adj})^*\iota_{n+1}$$
This observation can be used to exhibit an Abelian group structure in the general case.
One has then to construct higher simplices that exhibit the coherence conditions like (\ref{coherence:assoc}).
\end{remark}

\section{Integration}
In this section we give a more explicit description, on which we will draw later, of the integration map than in \cite{hopkins_singer}. For this we will first define the integration map on pairs. Consider
\begin{align*}
&\int_{S^1}: \Omega^{n+1}(S^1\times M, 1\times M)\longrightarrow \Omega^n(M),\hspace{5ex}
\int_{S^1}: C^{n+1}(S^1\times M, 1\times M)\longrightarrow C^n(M),\\
&\int_{S^1}: E^{n+1}(S^1\times M, 1\times M)\cong \tilde{E}^{n+1}(\Sigma M^+)\cong \tilde{E}^n(M^+)=E^n(M)\\
&\int_{S^1}: (E_{n+1},pt)^{(S^1\times M,1\times M)}\longrightarrow E_n^M,\; c\mapsto \left(\varepsilon_n^{adj}\right)^{-1}\circ c^{adj}
\end{align*}
so that by definition we have
$ \varepsilon_n\circ \Sigma\left(  \int_{S^1}c \right)=c $ for $c \in (E_{n+1},pt)^{(S^1\times M, 1\times M)}$.
Together, they yield a well-defined integration map:
$$ \int_{S^1}: \hat{E}^{n+1}(S^1\times M, 1\times M)\longrightarrow \hat{E}^n(M),\; [c,\omega,h]\mapsto \left[\int_{S^1} c, \int_{S^1} \omega, -\int_{S^1}h\right]$$

%

\begin{proposition}
The integration map on pairs is linear.
\end{proposition}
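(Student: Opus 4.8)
The goal is to show that $\int_{S^1}\colon \hat{E}^{n+1}(S^1\times M,1\times M)\to \hat{E}^n(M)$ respects the group operation defined in Section~\ref{ssec:abelian-group}. The plan is to compute both $\int_{S^1}$ applied to a sum of differential cocycles and the sum of the integrals, using the explicit formulas, and then to match the three components separately. Recall that the sum is represented by
\[
[c_0,\omega_0,h_0]+[c_1,\omega_1,h_1]=\bigl[\alpha_{n+1}\circ(c_0,c_1),\ \omega_0+\omega_1,\ h_0+h_1+(c_0,c_1)^*A_{n+1}\bigr],
\]
where $\alpha_{n+1}$ and $A_{n+1}$ are the choices made in \eqref{eqn:req-3}, \eqref{eqn:choice-of-An}, \eqref{choice-of-An-1}. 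Since the ambient degree of the integrand is $n+2$, if $n$ is even then $n+2$ is even and we use the original choice of $A_{n+2}$, while if $n$ is odd we land in an odd index; in either case the extension formula \eqref{choice-of-An-1} (together with \eqref{eqn:choice-fund-cocycle}) guarantees that $A_{n+1}=-\int_{S^1}\varphi_{n+1}^*A_{n+2}$ precisely when we need to descend. This compatibility is the whole point of having extended the $A_n$ via integration in the first place, so the definition of the group structure is tailored to make this proof go through.

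First, the form component is immediate: $\int_{S^1}$ on differential forms is $\bbR$-linear, so $\int_{S^1}(\omega_0+\omega_1)=\int_{S^1}\omega_0+\int_{S^1}\omega_1$. Second, the map component: by definition $\int_{S^1}c=(\varepsilon_n^{adj})^{-1}\circ c^{adj}$, and we must check $\int_{S^1}\bigl(\alpha_{n+1}\circ(c_0,c_1)\bigr)=\alpha_n\circ\bigl(\int_{S^1}c_0,\int_{S^1}c_1\bigr)$. This follows from the defining relation \eqref{eqn:req-3}, $\varepsilon_n\circ(\Sigma\alpha_n)=\alpha_{n+1}\circ\varphi_n$, upon passing to adjoints and using that the structure map $\varphi_n$ of the product spectrum is the obvious one; concretely, $\varepsilon_n\circ\Sigma(\int_{S^1}(\alpha_{n+1}\circ(c_0,c_1)))=\alpha_{n+1}\circ(c_0,c_1)=\alpha_{n+1}\circ\varphi_n\circ\Sigma(\,(\int_{S^1}c_0,\int_{S^1}c_1)\,)=\varepsilon_n\circ\Sigma\alpha_n\circ\Sigma(\int_{S^1}c_0,\int_{S^1}c_1)$, and then one strips $\varepsilon_n\circ\Sigma(-)$ since it is injective on such maps. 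Third, and this is the main obstacle, the correction-cochain component: one must show
\[
-\int_{S^1}\Bigl(h_0+h_1+(c_0,c_1)^*A_{n+2}\Bigr)
\quad\text{differs by a coboundary from}\quad
\Bigl(-\int_{S^1}h_0\Bigr)+\Bigl(-\int_{S^1}h_1\Bigr)+\bigl(\textstyle\int_{S^1}c_0,\int_{S^1}c_1\bigr)^*A_{n+1}.
\]
After cancelling the $-\int_{S^1}h_i$ terms (here $\int_{S^1}$ on cochains is $\bbR$-linear), this reduces to the identity $-\int_{S^1}(c_0,c_1)^*A_{n+2}=(\int_{S^1}c_0,\int_{S^1}c_1)^*A_{n+1}$, possibly up to a coboundary. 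Using the pullback formula \eqref{eqn:pullback-formula} and the fact that $(c_0,c_1)=\varphi_{n+1}$-adjoint-compatibly factors through the suspension — i.e. $(c_0,c_1)$ as a map $S^1\times M\to E_{n+1}\times E_{n+1}$ equals $\varphi_{n+1}$ precomposed with $\Sigma(\int_{S^1}c_0,\int_{S^1}c_1)$ up to the suppressed canonical maps — one rewrites $\int_{S^1}(c_0,c_1)^*A_{n+2}$ as $(\int_{S^1}c_0,\int_{S^1}c_1)^*\int_{S^1}\varphi_{n+1}^*A_{n+2}=-(\int_{S^1}c_0,\int_{S^1}c_1)^*A_{n+1}$ by \eqref{choice-of-An-1}, which is exactly what is needed, with the overall sign working out.

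The delicate point throughout is bookkeeping of the suppressed "canonical maps" $(\Sigma X)\times Y\to\Sigma(X\times Y)$ etc., together with the basepoint conditions: the integrand lives on the pair $(S^1\times M,1\times M)$, so all cochains in sight are reduced/relative, and one must check that $(c_0,c_1)^*A_{n+2}$ is a relative cochain (which it is, since $A_{n+2}$ is reduced and $c_0,c_1$ are based relative maps), so that $-\int_{S^1}$ of it makes sense and the identity \eqref{choice-of-An-1} applies verbatim. I would also remark that since \eqref{choice-of-An-1} only pins down $A_{n+1}$ up to a coboundary (Assumption~\ref{as:rat-even}), and since $\hat H^n(\Theta)$ depends only on $\Theta$ modulo $B^{n-1}$ by Lemma~\ref{lem:cob-trans}, the equivalence class $[\int_{S^1}c,\int_{S^1}\omega,-\int_{S^1}h]$ is unaffected, so additivity holds on the nose in $\hat E^n(M)$. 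Once the three components are matched, linearity over $\bbR$-scalars (for the $a$-image and, if relevant, a module structure) follows by the same computation with $h_1=\omega_1=0$ and $c_1=\mathrm{const}$, or is simply the statement that $\int_{S^1}\circ\, a=a\circ\int_{S^1}$ combined with additivity; in any case the substantive content is the additivity just sketched.
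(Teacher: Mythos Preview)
Your argument works only when $n+1$ is \emph{even} (i.e.\ the target degree $n$ is odd). In that parity, the relations \eqref{eqn:req-3} and \eqref{choice-of-An-1} are available---they \emph{define} $\alpha_n$ and $A_n$ in terms of $\alpha_{n+1}$ and $A_{n+1}$---and your computation that the two triples literally coincide is exactly the paper's proof in this case.

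The gap is the other parity. When $n+1$ is odd, $\alpha_n$ and $\alpha_{n+1}$ were chosen \emph{independently} (both in even degree), so the identity $\varepsilon_n\circ(\Sigma\alpha_n)=\alpha_{n+1}\circ\varphi_n$ that you invoke does \emph{not} hold on the nose---only up to homotopy. Likewise $A_n$ is not given by $-\int_{S^1}\varphi_n^*A_{n+1}$ in this case; both $A_n$ and $A_{n+1}$ were chosen separately via \eqref{eqn:choice-of-An}. The paper handles this by choosing a homotopy $H\colon \alpha_n\circ(\varepsilon_n^{adj}\times\varepsilon_n^{adj})^{-1}\simeq(\varepsilon_n^{adj})^{-1}\circ(\Omega\alpha_{n+1})$, applying Proposition~\ref{prop:homotopy} to trade the map discrepancy for an $\int_I H^*\iota_n$-term, and then verifying that the resulting cochain
\[
A_n-(\varepsilon_n^{adj}\times\varepsilon_n^{adj})^*\!\int_{S^1} H^*\iota_n+\int_{S^1}\varphi_n^*A_{n+1}
\]
is a cocycle in $C^{n-1}(E_n\times E_n;V)$, hence a coboundary by Assumption~\ref{as:rat-even}. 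Your proposal does not contain this step, and the indexing slips (writing ``ambient degree $n+2$'' and the displayed identity with $A_{n+2}$, $A_{n+1}$ instead of $A_{n+1}$, $A_n$) suggest you may have conflated the two cases.
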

\begin{proof}
Let $[c_0,\omega_0,h_0], [c_1, \omega_1,h_1] \in \hat{E}^{n+1}(S^1\times M, 1\times M)$.
We have to compare
\begin{align*}
&\left[ \int_{S^1} \alpha_{n+1}(c_0,c_1), \int_{S^1} (\omega_0+\omega_1), -\int_{S^1} \left( h_0+ h_1+(c_0,c_1)^*A_{n+1} \right)  \right]\text{ with}\\
&\left[ \alpha_n\circ \left(\int_{S^1} c_0, \int_{S^1} c_1\right), \int_{S^1}\omega_0+\int_{S^1}\omega_1, -\int_{S^1} h_0-\int_{S^1} h_1+\left(\int_{S^1} c_0,\int_{S^1} c_1\right)^*A_n \right]
\end{align*}
In case $n+1$ is even, by
(\ref{eqn:req-3}) and (\ref{choice-of-An-1}),
 both representing triples are equal.
In case $n+1$ is odd, choose a homotopy
$ H: \alpha_n\circ (\varepsilon_n^{adj}\times \varepsilon_n^{adj})^{-1}\simeq (\varepsilon_n^{adj})^{-1}\circ (\Omega\alpha_{n+1}) $.
Using Proposition \ref{prop:homotopy}, it suffices to show that $ \left(\int c_0, \int c_1  \right)^*A_n- (c_0^{adj}, c_1^{adj})^*\int_{S^1}H^*\iota_n  +\int_{S^1} (c_0,c_1)^* A_{n+1}   $ is a coboundary. But this is the pullback under $(c_0^{adj},c_1^{adj})^*\circ ((\varepsilon_n^{adj}\times \varepsilon_n^{adj})^*)^{-1}=\left( \int c_0, \int c_1 \right)^*$
of 
$$A_n-(\varepsilon_n^{adj}\times \varepsilon_n^{adj})^*\int_{S^1} H^*\iota_n+\int_{S^1} \varphi_n^* A_{n+1}\in C^{n-1}(E_n\times E_n; V)$$
It is routine to verify that this is a cocycle -- and thus a coboundary -- using (\ref{eqn:choice-of-An}) and (\ref{eqn:coboundary-integral-formula}).
\end{proof}

\subsection{Absolute Case}

Let $i: 1\times M\subset S^1\times M$, $j: S^1\times M \subset (S^1\times M, 1\times M)$ and $pr_2: S^1\times M\rightarrow 1\times M$ be the projection. We define an integration map on generalized cohomology:
\begin{equation}\label{eqn:susp}
\int_{S^1}: E^{n+1}(S^1\times M)\twoheadrightarrow E^n(M)
\end{equation}
For $x\in E^{n+1}(S^1\times M)$ write $x-pr_2^*i^*x=j^*y$ for a unique $y\in E^{n+1}(S^1\times M,1\times M)$. Then put
$\int_{S^1}x:=\int_{S^1}y$.
Since $pr_2^* i^*j^* y=0$ for $y\in E^{n+1}(S^1\times M, 1\times M)$ we have a commutative diagram
$$\xymatrix{
E^{n+1}(S^1\times M, 1\times M)\ar[d]_{\int_{S^1}}^\cong\ar[r]^-{j^*}	&	E^n(S^1\times M)\ar@{->>}[dl]^{\int_{S^1}}\\
E^n(M)
}$$
So (\ref{eqn:susp}) is indeed surjective.
Also $\int_{S^1}pr_2^*=0$ and analogously for differential forms.
We extend the integration map on pairs in differential cohomology similarly to a map
$$ \int_{S^1}: \hat{E}^{n+1}(S^1\times M)\longrightarrow \hat{E}^n(M) $$
such that $\int_{S^1}pr_2^*=0$ and $\int_{S^1}j^*=\int_{S^1}$. This goes as follows:
the map
$ \hat{E}^{n+1}(S^1\times M) \xrightarrow{id-pr_2^*i^*} \hat{E}^{n+1}(S^1\times M)$
composes with $i^*$ to zero and by Theorem \ref{thm:exact-seq-pairs} therefore has values in the image of
$ \hat{E}^{n+1}(S^1\times M, 1\times M)\overset{j^*}{\longrightarrow} \hat{E}^{n+1}(S^1\times M)$.
For $\hat{x}\in \hat{E}^{n+1}(S^1\times M)$ let $\hat{y}$ denote \emph{any} preimage of $\hat{x}-pr_2^*i^*\hat{x}$ under $j^*$.\\

\begin{proposition}\label{prop:abs-int-welldef}
The following is well-defined:
$$ \int_{S^1} \hat{x}:=\int_{S^1}\hat{y} $$
Moreover (since $i^*pr^*_2=0$ and $i^*j^*=0$ in differential cohomology), we have
$\int_{S^1}pr_2^*=0$,
$\int_{S^1}j^*=\int_{S^1}$
Also, the integration map on $S^1\times M$ is linear.
\end{proposition}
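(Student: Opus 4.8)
The last three assertions are formal, and I would dispatch them first. For $\int_{S^1}pr_2^*=0$ one uses $i^*pr_2^*=\mathrm{id}$: if $\hat x=pr_2^*\hat w$ then $\hat x-pr_2^*i^*\hat x=0$, so $\hat y=0$ is an admissible preimage. For $\int_{S^1}j^*=\int_{S^1}$ one uses $i^*j^*=0$: if $\hat x=j^*\hat y_0$ then $\hat x-pr_2^*i^*\hat x=\hat x=j^*\hat y_0$, so $\hat y=\hat y_0$ is admissible. Linearity of $\int_{S^1}$ on $S^1\times M$ then follows from additivity of $j^*$, $pr_2^*$, $i^*$ (a sum of admissible preimages is again admissible) together with linearity of $\int_{S^1}$ on pairs, the preceding proposition. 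Hence everything comes down to well-definedness; and since two admissible preimages of $\hat x-pr_2^*i^*\hat x$ differ by an element of $\ker\bigl(j^*\colon\hat E^{n+1}(S^1\times M,1\times M)\to\hat E^{n+1}(S^1\times M)\bigr)$, it suffices to show that $\int_{S^1}$ on the pair theory annihilates this kernel.

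Next I would identify that kernel. In the exact sequence of the pair $(S^1\times M,1\times M)$ for $E$ itself, the restriction $i^*\colon E^n(S^1\times M)\to E^n(1\times M)$ is onto because $pr_2^*$ is a section of it; hence the connecting map vanishes and $j^*\colon E^{n+1}(S^1\times M,1\times M)\to E^{n+1}(S^1\times M)$ is injective. Consequently, for $\hat u$ in the kernel above, $I(\hat u)$ lies in the kernel of this injective map, so $I(\hat u)=0$, and Theorem \ref{thm:axiom-verific} applied to the pair gives $\hat u=a(\Theta)$ for some $\Theta\in\Omega^n(S^1\times M,1\times M;V)$.

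It then remains to show $\int_{S^1}a(\Theta)=0$ when $a(\Theta)$ lies in that kernel. From $j^*a(\Theta)=a(j^*\Theta)=0$ in $\hat E^{n+1}(S^1\times M)$ and exactness of the sequence in Theorem \ref{thm:axiom-verific} for $S^1\times M$, the class of $j^*\Theta$ in $\Omega^n(S^1\times M;V)/\mathrm{im}\,d$ equals $ch(x)$ for some $x\in E^n(S^1\times M)$. On the other hand, evaluating the explicit formula $\int_{S^1}[c,\omega,h]=[\int_{S^1}c,\int_{S^1}\omega,-\int_{S^1}h]$ on the representative $(\mathrm{const},d\Theta,\Theta)$ of $a(\Theta)$ — using $\int_{S^1}\mathrm{const}=\mathrm{const}$, formula (\ref{eqn:coboundary-integral-formula}) (with both endpoint restrictions zero, as $\Theta$ is relative), and that integration of cochains along $S^1$ restricts to fibre integration of forms — yields $\int_{S^1}a(\Theta)=a\bigl(-\int_{S^1}\Theta\bigr)$ in $\hat E^n(M)$, with $\int_{S^1}\Theta\in\Omega^{n-1}(M;V)$ the ordinary fibre integral (which on a relative form agrees with the one built from the splitting, since $i^*\Theta=0$). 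Since $ch$ commutes with $j^*$, $pr_2^*$, $i^*$ and with the suspension isomorphism, it commutes with $\int_{S^1}$, and $\int_{S^1}$ carries $\mathrm{im}\,d$ into $\mathrm{im}\,d$; therefore $-\int_{S^1}\Theta\equiv ch\bigl(-\int_{S^1}x\bigr)$ modulo $\mathrm{im}\,d$ lies in the image of $ch\colon E^{n-1}(M)\to\Omega^{n-1}(M;V)/\mathrm{im}\,d$, so $a\bigl(-\int_{S^1}\Theta\bigr)=0$ by Theorem \ref{thm:axiom-verific} once more. This gives $\int_{S^1}(\ker j^*)=0$ and completes the proof.

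I do not expect a genuine obstacle here. The substantive step is the reduction $\ker j^*\subseteq\mathrm{im}\,a$ in the second paragraph; the one place that needs care is the third paragraph, namely pinning down the signs in $\int_{S^1}\circ a=a\circ\int_{S^1}$ on the pair theory and checking the compatibility of $ch$ with the integration map through the splitting that defines it.
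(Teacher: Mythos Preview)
Your proof is correct and follows essentially the same route as the paper: reduce to showing $\int_{S^1}(\ker j^*)=0$, use injectivity of $j^*$ on $E^{n+1}$ to write any $\hat y\in\ker j^*$ as $a(\Theta)$ with $j^*\Theta\equiv ch(t)$, and then use compatibility of $ch$ with $\int_{S^1}$ to conclude $\int_{S^1}\Theta\in\mathrm{im}(ch)$. The only cosmetic difference is that the paper makes the last step explicit by lifting $t$ through the splitting to a relative class $s\in E^n(S^1\times M,1\times M)$ and computing $ch(\int_{S^1}s)=\int_{S^1}ch(t)$ directly, whereas you package this as ``$ch$ commutes with the absolute $\int_{S^1}$''; the content is identical.
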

\begin{proof}
We first remark that integration on pairs anticommutes\footnote{Since we are integrating from the left instead of the right (as in \cite{bunke_schick_uniqueness}).} with $a$ and commutes with $R$ and $I$.
Consider the commutative diagram with exact rows
\begin{align*}
\small\xymatrix{
E^n(S^1\times M)\ar[r]^-{ch}				&\Omega^n(S^1\times M;V)/im(d)\ar[r]^a					&\hat{E}^{n+1}(S^1\times M)	\ar[r]^I				& E^{n+1}(S^1\times M)\\
E^n(S^1\times M,1\times M)\ar[r]^-{ch}\ar[u]^{j^*}		&\Omega^n(S^1\times M,1\times M;V)/im(d)\ar[r]^-a\ar[u]^{j^*}		&\hat{E}^{n+1}(S^1\times M,1\times M)\ar[r]^I\ar[u]^{j^*}	& E^{n+1}(S^1\times M,1\times M)\ar@{_(->}[u]^{j^*}
}
\end{align*}
Suppose $j^*\hat{y}=0$ for $\hat{y}\in \hat{E}^{n+1}(S^1\times M, 1\times M)$. Because we have shown that integration on pairs is linear it suffices to show that $\int_{S^1}\hat{y}=0$.  Since $0=I(j^*(\hat{y}))=j^*I(\hat{y})$ and because
$j^*: E^{n+1}(S^1\times M, 1\times M)\rightarrow E^{n+1}(S^1\times M)$ is injective
we may write $\hat{y}=a(\Theta)$. $j^*\Theta$ lies in the kernel of $a$ and therefore $j^*\Theta = ch(t)$ for some $t\in E^n(S^1\times M)$. But $t-pr_2^*i^*(t)$ lies in the kernel
of $i^*$ so that we may write $t-pr_2^*i^*(t)=j^*s$ for some $s\in E^n(S^1\times M, 1\times M)$. We have
$$ \int_{S^1} s=\int_{S^1}j^*s=\int_{S^1}t-\int_{S^1}pr_2^*i^*t =\int_{S^1} t$$
from which it follows that $\int_{S^1}ch(s)=ch(\int_{S^1} s)=ch(\int_{S^1} t)=\int_{S^1}ch(t)$ and that $\int_{S^1}\hat{y}=0$.
Finally, the integration on $S^1\times M$ is linear since
if $j^*\hat{y}_1=\hat{x}_1-pr_2^*i^*\hat{x}_1$ and $j^*\hat{y}_2=\hat{x}_2-pr_2^*i^*\hat{x}_2$ then we may choose $\hat{y}_1+\hat{y}_2$ in the preimage of
$\hat{x}_1+\hat{x}_2-pr_2^*i^*(\hat{x}_1+\hat{x}_2)$.
\end{proof}

\begin{proposition}\label{prop:integration-natural}
Integration is natural with respect to smooth maps $f: M\rightarrow N$:
\begin{align}
f^*\int_{S^1}\hat{x}=\int_{S^1} (id_{S^1}\times f)^*\hat{x}\hspace{1cm}\forall \hat{x}\in\hat{E}^{n+1}(S^1\times M)
\end{align}
and similarly for pairs: if $\hat{x}\in \hat{E}^{n+1}(S^1\times M,1\times M)$ then
$ f^*\int_{S^1}\hat{x}=\int_{S^1} (id_{(S^1,1)}\times f)^*\hat{x} $.
\end{proposition}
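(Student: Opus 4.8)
The plan is to verify naturality separately in the two cases (pairs and absolute), deriving the absolute case from the pairs case, and within the pairs case to check it on the level of representing triples $[c,\omega,h]$. For pairs, the map $\int_{S^1}$ sends $[c,\omega,h]$ to $[\int_{S^1}c,\int_{S^1}\omega,-\int_{S^1}h]$, so I first observe that for a smooth $f\colon M\to N$ the map $g:=id_{S^1}\times f$ is a map of pairs $(S^1\times M,1\times M)\to(S^1\times N,1\times N)$, and that $f^*$ on differential cohomology acts by precomposition on the $c$-component and by pullback of forms and cochains. Naturality then reduces to the three identities $f^*\circ\int_{S^1}c = \int_{S^1}(c\circ g)$ at the level of maps to the spectrum, $f^*\int_{S^1}\omega=\int_{S^1}g^*\omega$ for forms, and $f^*\int_{S^1}h=\int_{S^1}g^*h$ for singular cochains. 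The last two are exactly the pullback formula \eqref{eqn:pullback-formula} (and its de Rham analogue), applied after identifying integration over $S^1\times M$ with integration over $I\times M$ via the canonical collapse map; the spectrum-level identity $f^*\bigl((\varepsilon_n^{adj})^{-1}\circ c^{adj}\bigr) = (\varepsilon_n^{adj})^{-1}\circ (c\circ g)^{adj}$ is a formal manipulation of adjoints, since $(c\circ g)^{adj} = c^{adj}\circ f$ and $(\varepsilon_n^{adj})^{-1}$ is precomposed, not postcomposed.

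For the absolute case I would use the defining diagram: for $\hat x\in\hat E^{n+1}(S^1\times M)$ one picks $\hat y$ with $j^*\hat y=\hat x-pr_2^*i^*\hat x$ and sets $\int_{S^1}\hat x=\int_{S^1}\hat y$. Pulling back along $g=id_{S^1}\times f$, naturality of $i^*$, $j^*$, and $pr_2^*$ (which is immediate since these are induced by maps commuting with $g$ up to the obvious identifications) shows that $g^*\hat y$ is a valid preimage of $g^*\hat x - pr_2^*i^*g^*\hat x = g^*(\hat x - pr_2^*i^*\hat x)$ under $j^*$ on the target; since $\int_{S^1}$ on the absolute theory is well-defined independent of the choice of preimage by Proposition \ref{prop:abs-int-welldef}, we get $\int_{S^1}g^*\hat x=\int_{S^1}g^*\hat y$, and this equals $f^*\int_{S^1}\hat y=f^*\int_{S^1}\hat x$ by the pairs case already established. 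So the absolute statement is a formal consequence of the pairs statement plus the well-definedness.

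The main obstacle, such as it is, is bookkeeping the various canonical identifications that the paper has chosen to suppress: integration ``along $S^1$'' is really defined by pulling back integration along $I$ along the quotient map $I\times X\to S^1\times X$ (resp.\ the collapse to $\Sigma X$), and one must check that these quotient and collapse maps are themselves natural in $X$ so that the pullback formula \eqref{eqn:pullback-formula} transports cleanly. Once that is set up, every step is either \eqref{eqn:pullback-formula}, its de Rham analogue, an adjunction identity, or an appeal to Proposition \ref{prop:abs-int-welldef}; there is no real geometric content, and I would present the proof as a short sequence of such reductions rather than grinding through the cochain-level computation. In fact, since all the structure maps ($a$, $R$, $I$, the relative-to-absolute construction) are natural and the integration is built from natural operations on forms, cochains, and spectrum maps, one could also phrase the whole argument as: $\int_{S^1}$ is a composite of natural transformations, hence natural — with the present proof simply spelling out which naturality statements are being invoked.
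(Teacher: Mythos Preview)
Your proposal is correct and follows essentially the same approach as the paper's proof: verify the pairs case componentwise (using the adjunction identity $(c\circ(id_{S^1}\times f))^{adj}=c^{adj}\circ f$ for the map component and the pullback formula \eqref{eqn:pullback-formula} for forms and cochains), then deduce the absolute case by observing that $(id\times f)^*\hat y$ is a valid preimage in the defining construction. The paper's version is terser and omits the bookkeeping remarks about canonical identifications, but the argument is the same.
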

\begin{proof}
This is clear for differential forms, cochains, and also maps:
$(c\circ(id_{S^1}\times f))^{adj}=c^{adj}\circ f$. It therefore follows for integration on pairs. For
integration on $S^1\times M$ it then also follows because $\hat{x}-pr_2^*i^*\hat{x}=j^*\hat{y}$ implies
$  (id\times f)^*\hat{x}-pr_2^*i^*(id\times f)^*\hat{x}=j^*(id\times f)^*\hat{y}$.
\end{proof}

\begin{proposition}\label{prop:integral-RIa-compatible}
Integration commutes with the maps $R,I$ and anticommutes with $a$.
\end{proposition}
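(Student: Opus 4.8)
The plan is to reduce the absolute statement to the case of pairs, which was already handled in the proof of Proposition \ref{prop:abs-int-welldef} (where it was remarked that integration on pairs anticommutes with $a$ and commutes with $R$ and $I$), and then to chase the defining formula $\int_{S^1}\hat x = \int_{S^1}\hat y$ through the relevant commutative squares. So the first step is to verify the three compatibilities for integration \emph{on pairs}, working directly from the explicit formula $\int_{S^1}[c,\omega,h] = [\int_{S^1}c,\int_{S^1}\omega,-\int_{S^1}h]$. For $R$ this is immediate: $R\int_{S^1}[c,\omega,h] = \int_{S^1}\omega = \int_{S^1}R[c,\omega,h]$, using that $\int_{S^1}$ on forms is just the form-level integration. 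For $I$ it amounts to the identity $[\,(\varepsilon_n^{adj})^{-1}\circ c^{adj}\,] = \int_{S^1}[c]$ in $E^n(M)$, which is exactly the definition of $\int_{S^1}$ on generalized cohomology via the adjunction and the suspension isomorphism. For $a$, we compute $\int_{S^1}a(\Theta) = \int_{S^1}[\mathrm{const},d\Theta,\Theta] = [\mathrm{const}, \int_{S^1}d\Theta, -\int_{S^1}\Theta] = [\mathrm{const}, d(-\int_{S^1}\Theta), -\int_{S^1}\Theta] = a(-\int_{S^1}\Theta) = -a(\int_{S^1}\Theta)$, where we use that $\int_{S^1}$ commutes with $d$ on forms and $\int_{S^1}(\mathrm{const})^*\iota_n = 0$ since the integrand is pulled back from a point; this gives the anticommutation with $a$, the sign being an artifact of integrating from the left.

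The second step is to pass to the absolute case using the definition $\int_{S^1}\hat x := \int_{S^1}\hat y$ where $j^*\hat y = \hat x - pr_2^*i^*\hat x$. For $R$: since $R$ commutes with $j^*$, $pr_2^*$, $i^*$ and with integration on pairs, we get $R\int_{S^1}\hat x = R\int_{S^1}\hat y = \int_{S^1}R\hat y$; and $j^*R\hat y = R\hat x - pr_2^*i^*R\hat x$, so $R\hat y$ is a valid preimage defining $\int_{S^1}R\hat x$, whence $R\int_{S^1}\hat x = \int_{S^1}R\hat x$, using also that $\int_{S^1}pr_2^* = 0$ for forms. The argument for $I$ is formally identical, replacing $R$ by $I$ throughout and using $\int_{S^1}pr_2^* = 0$ on generalized cohomology together with the injectivity square already set up before Proposition \ref{prop:abs-int-welldef}. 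For $a$ one runs the same chase: starting from $\Theta \in \Omega^n(S^1\times M;V)/\mathrm{im}(d)$, one forms the pair-level preimage of $a(\Theta) - pr_2^*i^*a(\Theta) = a(\Theta - pr_2^*i^*\Theta)$, uses that $a$ commutes with $j^*$ and the remark that integration on pairs anticommutes with $a$, and concludes $\int_{S^1}a(\Theta) = -a(\int_{S^1}\Theta)$, i.e. anticommutation.

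The only genuine subtlety — and the step I expect to require the most care — is bookkeeping of the signs and of the well-definedness: one must check that the preimages chosen at the pair level are compatible with the ones used to \emph{define} $\int_{S^1}$ on the absolute theory for $R\hat x$, $I\hat x$, $a(\Theta)$ respectively, so that no ambiguity from the choice of $\hat y$ enters. This is guaranteed by Proposition \ref{prop:abs-int-welldef} (well-definedness of the absolute integral) applied to each of the three target theories, but it must be invoked explicitly. Everything else is routine diagram-chasing in the ladder of exact sequences relating the pair theory and the absolute theory, using naturality of $R$, $I$, $a$ under $j^*$, $i^*$, $pr_2^*$, which is immediate from their definitions on triples.
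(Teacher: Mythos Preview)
Your proposal is correct and follows essentially the same approach as the paper's (very terse) proof: first verify the compatibilities for integration on pairs directly from the formula $[c,\omega,h]\mapsto[\int_{S^1}c,\int_{S^1}\omega,-\int_{S^1}h]$, then reduce the absolute case to the pair case using $\int_{S^1}pr_2^*i^*=0$ on forms and on $E$-cohomology. Your write-up simply spells out in detail the diagram chase that the paper compresses into a single sentence.
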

\begin{proof}
This is readily verified for integration on pairs. But $\int_{S^1} pr_2^*i^*=0$ for forms and cohomology
classes of $S^1\times M$, from which the assertion follows.
\end{proof}

\subsection{Multiple Integrals}

On $S^1\times S^1\times M$ there is also an \emph{integration over the second variable}, defined by $\int_{S^1}^{'}=\int_{S^1} \tau^*$ for the twist $\tau: S^1\times S^1\times M\rightarrow S^1\times S^1\times M,\, (z,w,m)\mapsto (w,z,m)$. 

\begin{proposition}\label{prop:mult-integral}
$ \int_{S^1}\int_{S^1} \tau^*=-\int_{S^1}\int_{S^1}: \hat{E}^{n+2}(S^1\times S^1\times M)\longrightarrow  \hat{E}^n(M)$ for all even $n$:
\begin{equation}\label{eqn:mult-integral}
\int_{S^1} \int_{S^1}^{'}=-\int_{S^1}\int_{S^1}
\end{equation}
\end{proposition}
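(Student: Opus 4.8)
The plan is to reduce the statement on differential cohomology to the corresponding statements on forms, cochains, and spectrum-level maps, exactly in the spirit of the proof that integration on pairs is linear. First I would record the elementary model-level anticommutativity relations: for differential forms one has $\int_{S^1}\int_{S^1}^{\prime}\omega = -\int_{S^1}\int_{S^1}\omega$ by Fubini and the usual sign of interchanging two one-dimensional integrations; for singular cochains the analogous identity $\int_I\int_I^{\prime} u = -\int_I\int_I u$ holds because the Eilenberg--Zilber chain equivalence $B$ for the bi-prism $I\times I\times X$ differs from the twisted one by the sign $(-1)^{1\cdot 1}=-1$ (this is where the hypothesis ``$n$ even'' enters: the total degree bookkeeping in $C^*(-;V)$ with $V$ concentrated in even degrees makes the sign clean). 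On the level of representing maps into the spectrum, $\left(\varepsilon_n^{adj}\right)^{-1}\circ(\,\cdot\,)^{adj}$ applied twice, once after the twist, is literally the same as applying it twice without the twist up to the canonical identification $\Omega\Omega E_{n+2}\cong\Omega\Omega E_{n+2}$ interchanging the two loop coordinates — a homotopy, not an equality, so a correction term $\int_I(\text{that homotopy})^*\iota$ appears, governed by Proposition \ref{prop:homotopy}.

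Next I would assemble these into the absolute case. Since by Proposition \ref{prop:abs-int-welldef} the absolute integration is computed by splitting off $pr_2^*i^*$ and integrating a relative class, and since double integration on $S^1\times S^1$ can be set up by iterating this splitting, it suffices to prove \eqref{eqn:mult-integral} for classes in $\hat{E}^{n+2}(S^1\times S^1\times M, (1\times S^1\cup S^1\times 1)\times M)$ — and there the integration map is given by the explicit formula $[c,\omega,h]\mapsto[\cdots]$ with the three component-wise operations above. Applying the form-level and cochain-level anticommutativity to the second and third components, and the homotopy $H$ interchanging the two loop directions (together with $\int_{S^1}\varepsilon^*\iota = \iota$, i.e. \eqref{eqn:choice-fund-cocycle}) to the first component, I would get that $\int_{S^1}\int_{S^1}^{\prime}[c,\omega,h]$ and $-\int_{S^1}\int_{S^1}[c,\omega,h]$ are represented by triples with equal forms and with $h$-parts differing by $\delta v$ for an explicit $v$ built from $h$ and from the correction cochain $\int_I H^*\iota$; Lemma \ref{lem:cob-trans} then finishes. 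The reduction from the absolute to this doubly-relative case is the bookkeeping analogue of the diagram chase in Proposition \ref{prop:abs-int-welldef}, using naturality (Proposition \ref{prop:integration-natural}) and $\int_{S^1}pr_2^* = 0$ to discard the pieces that factor through one of the two basepoint sections.

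The main obstacle I expect is getting the signs to close up. There are three independent sources of sign: the Fubini interchange on forms, the sign of the bi-prism subdivision for $B$ on cochains, and the sign produced by reordering the two iterated interval-integrals $\int_I\int_I$ versus $\int_I\int_I^{\prime}$ in the identity \eqref{eqn:coboundary-integral-formula} applied twice (the excerpt's own computation in the negation argument already flags that ``changing the order of the integrals gave a minus sign''). One must check that these combine to a single overall $-1$ rather than $+1$ or an $n$-dependent sign; this is precisely why the statement is restricted to even $n$, where $V$ may be taken concentrated in even degrees and all the graded-commutativity signs in $C^*(-;V)$ and $\Omega^*(-;V)$ trivialize. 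Everything else — well-definedness of the relative reduction, naturality, linearity — is already available from Propositions \ref{prop:abs-int-welldef}, \ref{prop:integration-natural}, and \ref{prop:integral-RIa-compatible} and requires only careful transcription.
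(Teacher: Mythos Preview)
Your overall strategy---reduce to component-level identities for forms, cochains, and maps, then handle the map component via a homotopy correction---is the paper's strategy. But the key step has a real gap. The additive inverse in $\hat{E}^n(M)$ is \emph{not} given componentwise: by the construction in Section~\ref{ssec:abelian-group} one has $-[c,\omega,h]=[\nu_n\circ c,\,-\omega,\,-h+c^*N_n]$, with $\nu_n:E_n\to E_n$ a map representing negation and $N_n$ the accompanying correction cochain. So after doubly integrating a suitably normalised representative $\hat y=[c,\omega,h]$ (the paper produces one via Lemma~\ref{lem:techn-int}, staying inside the singly-relative group rather than passing to a doubly-relative group which the paper has not defined), the first components you must compare are $\nu_n\circ\bigl(\int_{S^1}\!\int_{S^1}c\bigr)$ and $twist\circ\bigl(\int_{S^1}\!\int_{S^1}c\bigr)$. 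The relevant homotopy is therefore $H_n:\nu_n\simeq twist$---the classical fact that coordinate swap on a double loop space is homotopic to loop inversion---not merely ``the homotopy interchanging the two loop directions''.

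After applying Proposition~\ref{prop:homotopy}, the third components differ by the pullback of $N_n-\int_I H_n^*\iota_n\in\tilde C^{n-1}(E_n;V)$. This is \emph{not} a coboundary for free: one verifies it is a cocycle and then invokes $\tilde H^{n-1}(E_n;V)=0$ for $n$ even (Assumption~\ref{as:rat-even}). That---not graded sign bookkeeping in $C^*(-;V)$---is where the hypothesis ``$n$ even'' is actually used; the cochain identity $\int_{S^1}\int_{S^1}\tau^*=-\int_{S^1}\int_{S^1}$ holds regardless of parity, so your attribution of the parity restriction to Eilenberg--Zilber signs is off.
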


Note first that the corresponding equation (\ref{eqn:mult-integral}) is true for cochains (since $\tau^*B([S^1]\otimes [S^1])=(-1)^{1\cdot 1} B([S^1]\otimes [S^1])$ where $B$ denotes the Alexander-Whitney map) and also for differential forms. It is also true for functions $c:S^1\times S^1\times M \rightarrow E_{n+2}$ in the sense that
$ \int_{S^1}\int_{S^1} \tau^*c = twist\circ \int_{S^1}\int_{S^1} c$
where $E_n\approx \Omega^2 E_{n+2}\xrightarrow{twist} \Omega^2 E_{n+2}\approx E_n$ interchanges the two path variables.
The following lemma allows one integrate twice
``directly'' on suitable cocycle representatives:


\begin{lemma}\label{lem:techn-int}
Let $\hat{x}\in \hat{E}^{n+2}(S^1\times S^1\times M, 1\times S^1\times M)$ and denote
\begin{align*}
i_{13}: S^1\times 1\times M \subset S^1\times S^1\times M,&\hspace{2ex} i_{23}: 1\times S^1\times M \subset S^1\times S^1\times M\\
i'_{13}: (S^1\times 1\times M,1\times 1\times M) &\subset (S^1\times S^1\times M, 1\times S^1\times M)\\
pr'_{13}: (S^1\times S^1\times M, 1\times S^1\times M) &\rightarrow (S^1\times 1 \times M, 1\times 1\times M)
\end{align*}
Then $\hat{y}=\hat{x}-{pr'}_{13}^*{i'}_{13}^*\hat{x}$ may be represented by a triple $(c,\omega,h)$ with
\begin{align*}
&c|_{(S^1\vee S^1)\times M}=const_{\,pt \in E_{n+2}},\hspace{2ex}
i_{13}^*\omega=0\text{ and }\; i_{23}^*\omega=0,\hspace{2ex}
h|_{(S^1\vee S^1)\times M}=0
\end{align*}
\end{lemma}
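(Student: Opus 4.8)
The strategy is to start from an arbitrary cocycle representative of $\hat x$ and successively normalize it on the two "bad" circles $S^1\times 1\times M$ and $1\times S^1\times M$ by applying the equivalence relation of differential cohomology (Proposition \ref{prop:homotopy} together with Lemma \ref{lem:cob-trans} and Lemma \ref{lem:gluing}). The first reduction, using $\hat y=\hat x-{pr'}_{13}^*{i'}_{13}^*\hat x$, already kills $\hat y$ on $S^1\times 1\times M$ in the sense that ${i'}_{13}^*\hat y=0$; the point is to upgrade this vanishing in cohomology to a genuine vanishing of the representing triple on $(S^1\vee S^1)\times M$ and to arrange simultaneously that the curvature form pulls back to zero on both coordinate circles.

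First I would pick any representative $(c_0,\omega_0,h_0)$ of $\hat y$ with $c_0|_{1\times S^1\times M}=const$ (this is forced by working in the relative group $\hat E^{n+2}(S^1\times S^1\times M,1\times S^1\times M)$). Since ${i'}_{13}^*\hat y=0$, there is a homotopy $C$ rel $1\times 1\times M$ from $c_0|_{S^1\times 1\times M}$ to $const$, together with a chain $H$ witnessing the equivalence; using a reparametrization of $I$ that is constant near the endpoints (as in the transitivity argument earlier in the paper) and Lemma \ref{lem:gluing} applied to the open cover of $S^1\times S^1\times M$ by a neighborhood of $S^1\times 1\times M$ and the complement of $S^1\times 1\times M$, I would extend $C$ and $H$ to a homotopy and chain on all of $S^1\times S^1\times M$ that is constant (resp. $pr^*$) away from the first circle; because $N\subset M$-type inclusions here are cofibrations one gets a smooth retraction to glue along, exactly as in the proof of exactness at $\hat E^n(M)$ in Theorem \ref{thm:exact-seq-pairs}. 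Pulling back along this retraction replaces $(c_0,\omega_0,h_0)$ by an equivalent triple $(c_1,\omega_1,h_1)$ with $c_1|_{(S^1\vee S^1)\times M}=const$ and $h_1|_{(S^1\vee S^1)\times M}=0$, while $\omega_1=\omega_0$ is unchanged.

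It remains to normalize the curvature form. Here I would use the relative de Rham observations (i) and (ii) from the proof of Theorem \ref{thm:axiom-verific}: the restrictions $i_{13}^*\omega_1$ and $i_{23}^*\omega_1$ are closed forms on $S^1\times M$ that become exact after integration (because $\hat y$ was built to vanish on these faces up to the $pr_2^*i^*$ correction), so one can subtract $a$ of a suitable form supported near the two coordinate circles to arrange $i_{13}^*\omega=0$ and $i_{23}^*\omega=0$; applying $a$ changes the cocycle by something of the form $[const,d\Theta,\Theta]$, so after a further application of Lemma \ref{lem:cob-trans} and Proposition \ref{prop:homotopy} one can absorb the correction without disturbing $c|_{(S^1\vee S^1)\times M}=const$ and $h|_{(S^1\vee S^1)\times M}=0$. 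The main obstacle is the bookkeeping in this last step: one must choose the correcting forms and cochains so that the three normalization conditions hold \emph{simultaneously} and the adjustments on the first circle do not reintroduce nonzero data on the second, which is why the order matters — first glue away the map and the $h$-cochain using the cofibration/retraction argument, then clean up the form, checking at each stage that the previously achieved vanishing is preserved because all further modifications are supported near $(S^1\vee S^1)\times M$ and are pure coboundaries or $a$-terms.
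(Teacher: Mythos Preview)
Your overall strategy --- normalize the map component via homotopy extension over the cofibration $(S^1\vee S^1)\times M\subset S^1\times S^1\times M$, then handle the cochain and the form --- is sound, but you are working harder than necessary and there is one genuine misconception.

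The curvature form needs no separate treatment: your step 4 is superfluous. Since $\hat y$ lives in the \emph{relative} group $\hat E^{n+2}(S^1\times S^1\times M,\,1\times S^1\times M)$, any representative already has $i_{23}^*\omega=0$. And because $\hat y=\hat x-{pr'}_{13}^*{i'}_{13}^*\hat x$ and ${pr'}_{13}\circ i'_{13}=\mathrm{id}$, one has $i_{13}^*R(\hat y)=i_{13}^*R(\hat x)-i_{13}^*R(\hat x)=0$ on the nose --- the restriction is not merely exact, it is zero. So the ``bookkeeping obstacle'' you identify does not arise.

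For the map $c$ and the cochain $h$, the paper takes a cleaner route than the retraction argument from Theorem~\ref{thm:exact-seq-pairs}. It first reduces (as you do) to ${i'}_{13}^*\hat x=0$, then extends the null-homotopy of $c|_{S^1\times1\times M}$ by $const$ on $1\times S^1\times M$ and by $c$ at time $0$, and uses the cofibration to extend to a homotopy $C$ on all of $I\times S^1\times S^1\times M$, rel $1\times S^1\times M$. Proposition~\ref{prop:homotopy} then replaces $(c,\omega,h)$ by $(\tilde c,\omega,\tilde h)$ with $\tilde c$ constant on $(S^1\vee S^1)\times M$ and $\omega$ unchanged. Finally --- and this is the trick you miss --- one forms the triple for $\hat y=\hat x-{pr'}_{13}^*{i'}_{13}^*\hat x$ \emph{again}, now using the specific representative $(\tilde c,\omega,\tilde h)$; since $\tilde c|_{S^1\times1\times M}=const$ and $i_{13}^*\omega=0$, this second subtraction just replaces $\tilde h$ by $\tilde h-{pr'}_{13}^*i_{13}^*\tilde h$, which vanishes on both $S^1\times1\times M$ (by construction) and on $1\times S^1\times M$ (since $\tilde h$ is relative and ${pr'}_{13}$ collapses $1\times S^1\times M$ to $1\times1\times M$). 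Your retraction approach can be made to work, but you would need to verify carefully that the extended $(C,H)$ and the retraction respect the relative structure on $1\times S^1\times M$, which your sketch does not address; the paper's two-step ``homotope $c$, then subtract to fix $h$'' avoids this entirely.
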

\begin{proof}[Proof of Lemma]
Write $\hat{x}=[c,\omega,h]$.
Replacing $\hat{x}$ by $\hat{\hat{x}}=\hat{x}-{pr'}_{13}^*{i'}_{13}^*\hat{x}$ and noticing
$\hat{\hat{x}}-{pr'}_{13}^*{i'}_{13}^*\hat{\hat{x}}=\hat{\hat{x}}=\hat{x}-{pr'}_{13}^*{i'}_{13}^*\hat{x}$
we may assume without loss of generality that ${i'}_{13}^*\hat{x}=0$ which implies that ${i}_{13}^*\omega=0$ and that we have a homotopy
$c|_{S^1\times 1\times M}\simeq const$ rel $1\times 1\times M$. We extend this homotopy by $const$ and $c$ to a map
$$ I\times (S^1\times 1\times M \cup 1\times S^1\times M)\cup 0\times (S^1\times S^1\times M)  \longrightarrow  E_{n+2}$$
Since $S^1\times 1\times M\cup 1\times S^1\times M \subset S^1\times S^1\times M$ is a closed cofibration
we may extend this map to a homotopy $C$ rel $1\times S^1\times M$ from $c$ to a map $\tilde{c}:=C_1$ with $\tilde{c}|_{S^1\times 1\times M\cup 1\times S^1\times M}=const$. It follows from Propostion \ref{prop:homotopy} that, denoting $\tilde{h}=h-\int_I C^*\iota_{n+2}$,
$$ \hat{x}=\left[ \tilde{c},\omega, h-\int_I C^*\iota_{n+2} \right]=[\tilde{c},\omega,\tilde{h}] $$
We have therefore adjusted $c$ according to the requirement. Then $\hat{y}=\hat{x}-{pr'}_{13}^*{i'}_{13}^*\hat{x}$ is the triple representative we seek.

\end{proof}

\begin{proof}[Proof of Proposition \ref{prop:mult-integral}]
For $\hat{z}\in \hat{E}^{n+2}(S^1\times S^1\times M)$ write $\hat{z}-pr_{23}^*i_{23}^*\hat{z}=j_{23}^* \hat{x}$ for some $\hat{x}\in \hat{E}^(S^1\times S^1\times M, 1\times S^1\times M)$. Then $\hat{y}=\hat{x}-pr_{13}^*i_{13}^*\hat{x}$ may be represented by
a triple $(c,\omega,h)$ as in Lemma \ref{lem:techn-int}. Also,
$$-\int_{S^1}\int_{S^1} \hat{z}=-\int_{S^1}\int_{S^1} \hat{y}=\left[\nu_n\circ \left(\int_{S^1}\int_{S^1} c\right), -\int_{S^1}\int_{S^1}\omega, -\int_{S^1}\int_{S^1}h+ \left(\int_{S^1}\int_{S^1} c\right)^*N_n\right]$$
while $$\int_{S^1}\int_{S^1} \tau^*\hat{z}=\int_{S^1}\int_{S^1} \tau^*\hat{y}=\left[twist\circ\left( \int_{S^1}\int_{S^1}c\right), \int_{S^1}\int_{S^1}\tau^*\omega, \int_{S^1}\int_{S^1}\tau^*h\right]$$
It is sufficient now to observe that $N_n-\int_I H_n^*\iota_n$ is a coboundary for a homotopy $H_n: \nu_n \simeq twist$, by verifying that it is a cocycle.
\end{proof}

For more details on the proof of Proposition \ref{prop:mult-integral} we refer to \cite{meine_promotion}.

\section{Products}

%
Suppose now that $E_n$ is an $\Omega$-spectrum representing a multiplicative cohomology theory.
For a better understanding of some of the points arising in the product construction later, let's 
first assume that the spaces $E_n$ are manifolds and the $\iota_n$ differential forms. $S$ will typically denote a smooth manifold in this section.
In this case we have a natural lift of $can: E_n^S\rightarrow [S, E_n]$ to $\hat{H}^n(S; (E_n,\iota_n); V)$ given by $s(c)=[c,c^*\iota_n,0]$. From this we obtain a natural lift $\tilde{ch}=R\circ s: E_n^S\rightarrow \Omega^n_{cl}(S;V)$ of $ch\circ can$.
Let $$\mu_{n,m}: E_n\wedge E_m \rightarrow E_{n+m}$$ be maps representing the multiplication in the spectrum. We will write also
$$c_1 \cup c_2 := \mu_{n,m}\circ (c_1,c_2),\hspace{1cm} (c_1\in E_n^S, c_2\in E_m^S)$$
Assume now that $\hat{H}^*$ has a multiplicative structure. Then compute
\begin{align*}
\tilde{ch}(c_1)\wedge\tilde{ch}(c_2)&=Rs(c_1)\wedge Rs(c_2)=R(s(c_1)\cup s(c_2))\\
	&= R(s(c_1\cup c_2)+s(c_1)\cup s(c_2)-s(c_1\cup c_2))\\
	&=\tilde{ch}(c_1\cup c_2)+R\left(s(c_1)\cup s(c_2)-s(c_1\cup c_2)\right)
\end{align*}
Since $I(s(c_1)\cup s(c_2)-s(c_1\cup c_2))=0$
we may write 
$$a(\varsigma(c_1,c_2))=s(c_1)\cup s(c_2)-s(c_1\cup c_2)$$
for a natural transformation
$$\varsigma: E_n^S\times E_m^S \Longrightarrow \Omega^{n+m-1}(S)/im(d)/im(ch)$$
which, by Yoneda's Lemma, is implemented by cosets of differential forms $M_{n,m}\in \Omega^{n+m-1}(E_n\times E_m; V)/im(d)$ (uniquely determined up to $im(ch)$) via the formula $\varsigma(c_1,c_2)=(c_1,c_2)^*M_{n,m}$.
Taking $c_1=pr_1, c_2=pr_2$ we see that $M_{n,m}$ is given by the condition
$ a(M_{n,m})=s(id_n)\times s(id_m)-s(\mu_{n,m})$.
Applying $R$ yields as a necessary condition for a product on $\hat{H}^*$:
\begin{equation}
dM_{n,m}=\iota_n\times \iota_m - \mu_{n,m}^*\iota_{n+m}
\end{equation}
Conversely, this is precisely the condition that enables us to define a multiplication map on $\hat{H}^*$, using the isomorphisms $\hat{H}^*(M_{n,m})$ induced by the $M_{n,m}$'s and the transfer map $(\mu_{n,m})_*$
$$ \hat{H}^*(S; (E_n\times E_m, \iota_n\times \iota_m); V)\cong \hat{H}^*(S; (E_n\times E_m; \mu_{n,m}^*\iota_{n+m}); V)\rightarrow \hat{H}^*(S; (E_{n+m},\iota_{n+m}); V)$$
Notice that this map depends only on the coset $(M_{n,m}+im(\delta))+im(ch)$, by Lemma \ref{lem:cob-trans} and Proposition \ref{prop:homotopy}.\\

If the product on $\hat{H}^*$ is associative, bilinear, has a unit, or is graded commutative then
the $M_{n,m}$ will not be completely arbitrary, but satisfy certain \emph{coherence conditions}, e.g.
$$ (id_n\times \mu_{m,l})^*M_{n,m+l}+\iota_n\times M_{m,l}\equiv (\mu_{n,m}\times id_l)^*M_{n+m,l}+M_{n,m}\times \iota_l-\int_I H^*\iota_{n+m+l}  $$
modulo coboundaries and $im(ch)$ for associativity. Here, $H$ denotes a homotopy $H: \mu_{n+m,l}\circ (\mu_{n,m}\times id_l)\simeq \mu_{n,m+l}\circ (id_n\times \mu_{m,l})$. Conversely,
whenever this coherence condition is met, we are able to define an associative product (cf. the proof of associativity in Theorem \ref{thm:products-even-degrees})\footnote{Such a condition might of course be ensured also by working only with ring spectra in a stricter sense.}.

\begin{proof}[Proof of claimed Coherence Condition]
If the product is associative then certainly $(s(id_n)\times s(id_m) )\times s(id_l)$ is equal to
$s(id_n)\times (s(id_m)\times s(id_l))$. The first is
\begin{align*}
(s(id_n)\times s(id_m) )\times s(id_l)&=\left( s(\mu_{n,m})+a(M_{n,m}) \right)\times s(id_l)\\
&=(\mu_{n,m}\times id_l)^* s(id_{n+m})\times s(id_l)+a(M_{n,m}\times Rs(id_l))\\
&=(\mu_{n,m}\times id_l)^* \left( s(\mu_{n+m,l})+a(M_{n+m,l})\right)+a(M_{n,m}\times \iota_l)\\
&=(\mu_{n+m,l}(\mu_{n,m}\times id_l) )^* s(id_{n+m+l})+a\left( (\mu_{n,m}\times id_l)^*M_{n+m,l}+M_{n,m}\times \iota_l \right)
\end{align*}
using $a(\Theta) \times \hat{x}=a(\Theta \times R(\hat{x}))$ from the axioms for products. Similarly, the second is
\begin{align*}
s(id_n)\times (s(id_m)\times s(id_l))&=(\mu_{n,m+l}(id_n\times \mu_{m,l}))^*s(id_{n+m+l})+
a\left( (id_n\times \mu_{m,l})^*M_{n,m+l}+\iota_n\times M_{m,l} \right)
\end{align*}
Now we use the \emph{homotopy formula}, which follows from the axioms for any differential generalized cohomology theory, c.f. \cite{bunke_schick_uniqueness}:
$$ i_1^*\hat{x}-i_0^*\hat{x}=a\left( \int_I R(\hat{x}) \right),\hspace{1cm} \forall\hat{x}\in \hat{E}^{*}(I\times M) $$
Taking $\hat{x}=H^*s(id_{n+m+l})$ here, we conclude that
$$(\mu(id\times\mu))^*s(id)-(\mu(\mu\times id))^*s(id)=a\left( \int_I R(H^*s(id))  \right)=a\left(\int_I H^*\iota_{n+m+l}\right)$$
The assertion now follows from the exact sequence in Theorem \ref{thm:axiom-verific}.
\end{proof}

As already remarked, the product depends on the coset $(M_{n,m}+im(\delta))+im(ch)$.
Different cosets
correspond in general to \emph{different multiplicative structures}: Two multiplications $\cup$ and $\tilde{\cup}$ determine a natural transformation $$K: E^n\times E^m\Longrightarrow H_{dR}^{n+m-1},\hspace{0.5cm} a\left( K(I(x),I(y)) \right)=x\tilde{\cup}y - x\cup y,\hspace{1cm} (x,y\in \hat{H}^n(S;(E_n,\iota_n);V))$$ which
arises by Yoneda's Lemma from elements $\eta_{n,m}\in H_{dR}^{n+m-1}(E_n\times E_m;V)$. We then have $\tilde{M}_{n,m}=M_{n,m}+\eta_{n,m}$. Conversely, choosing $\eta$'s yields a $K$ and thus another bilinear product.

\subsection{Construction in Even Degrees}

For rationally even spectra all these ambiguities disappear for even $n$. If we demand \emph{compatibility with integration} then the product is determined in all degrees.
Thus suppose that $E^*$ is a rationally even multiplicative cohomology theory with units\footnote{Otherwise $\hat{E}$ will of course not have units either. The other arguments go through, though.}, with internal product $\cup$, and associated external product $\times$.\\

Let $\mu_{n,m}: E_n\wedge E_m \rightarrow E_{n+m}$ denote maps representing multiplication. We
use the unqualified $\mu$ to denote the induced multiplication $V\otimes V\rightarrow V$ on the \emph{coefficients} $V^*=E^*(pt)\otimes \bbR$. Choose also a map $e: pt\rightarrow E_0$ representing the unit.\\

Recall that by Acyclic Models we have a (unique up to homotopy) natural chain homotopy with the property that for differential forms $\omega_0, \omega_1$ we have
$$\delta B(\omega_0\otimes \omega_1)+B d(\omega_0\otimes \omega_1)= \omega_0\wedge \omega_1-\omega_0\cup\omega_1$$

Since $ch$ is compatible with the product we have
$0=ch(id)\times ch(id)-ch(id\times id)=\mu(\iota_n\times \iota_n)-\mu_{n,m}^*\iota_{n+m}$ in reduced cohomology. We may thus \emph{choose} reduced cochains $M_{n,m}\in \tilde{C}^{n+m-1}(E_n\wedge E_m; V)$ with
$$\delta M_{n,m}=\mu(\iota_n\times \iota_m)-\mu_{n,m}^*\iota_{n+m},\hspace{1cm}(n,m \text{ even})$$

By assumption (rationally even), any two choices differ by a coboundary. The following \emph{product in even degrees}
is thus \emph{independent} (Lemma \ref{lem:cob-trans}) of the choice of the cochains $M_{n,m}$:
$$\xymatrix{
\hat{H}^n(S; (E_n, \iota_n); V)\times \hat{H}^m(S; (E_m, \iota_m); V)\ar[d]^{\mu\circ \chi}\\
\hat{H}^{n+m}(S; (E_n\times E_m, \mu(\iota_n\times\iota_m); V)\ar[d]^{\hat{H}^{n+m}(M_{n,m})}\\
\hat{H}^{n+m}(S; (E_n\times E_m, \mu_{n,m}^*\iota_{n+m}); V)\ar[d]^{(\mu_{n,m})_*}\\
\hat{H}^{n+m}(S; (E_{n+m}, \iota_{n+m}); V)
}$$
Explicitly, $\mu\left([c_0,\omega_0,h_0],\, [c_1,\omega_1,h_1] \right)$ equals
$$\left[ \mu_{n,m}(c_0,c_1),\omega_0\wedge \omega_1,  B(\omega_0\otimes\omega_1)+h_0\cup\omega_1+(-1)^n\omega_0\cup h_1-h_0\cup\delta h_1 + (c_0,c_1)^*M_{n,m} \right]$$
Here we have suppressed the coefficient map $\mu$ from the notation;
By Lemma \ref{lem:cob-trans} different choices for the chain homotopy $B$ yield
the same map in differential cohomology.

\begin{theorem}\label{thm:products-even-degrees}
The product just defined on $\hat{E}^{2*}(S)$ in even degrees is
(graded) commutative,
bilinear,
associative, and
has a unit $\hat{1}_S$, to be defined in the proof (\ref{eqn:unit}).
In other words, $\hat{E}^{2*}$ is a functor to unital graded commutative rings (cf. Proposition \ref{prop:pullback-mult})
\end{theorem}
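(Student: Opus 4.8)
The plan is to verify each ring axiom in turn by unwinding the explicit formula for $\mu$ and reducing everything, via Lemma \ref{lem:cob-trans} and Proposition \ref{prop:homotopy}, to an identity of cocycles or chain homotopies that holds by Assumption \ref{as:rat-even}. Throughout, the guiding principle is the one already used for $\chi$ and for the Abelian group structure: since the relevant $E_n$, $E_n\times E_m$, etc.\ have vanishing odd cohomology with coefficients in a graded vector space that is zero in odd degrees, it suffices in even total degree to check that an error cochain is a \emph{cocycle}; it is then automatically a coboundary, and Lemma \ref{lem:cob-trans} finishes the argument.

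First I would treat bilinearity. Expanding $\mu(\hat{x}_0+\hat{x}_1,\hat{y})$ using the definitions of $+$ (via $\alpha_n$, $A_n$) and of $\mu$ (via $\mu_{n,m}$, $M_{n,m}$, $B$, $\chi$), one gets a triple whose underlying map is $\mu_{n,m}\circ(\alpha_n(c_0,c_1),c)$ and whose form-component is distributive automatically. The map-component equals the one for $\mu(\hat{x}_0,\hat{y})+\mu(\hat{x}_1,\hat{y})$ up to a homotopy built from $\alpha_{n+m}$ and $\mu_{n,m}$; by Proposition \ref{prop:homotopy} the difference of the two triples is then governed by a cochain on $E_n\times E_n\times E_m$ involving $A_n$, $A_{n+m}$, $M_{n,m}$, and an integral $\int_I H^*\iota_{n+m}$. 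This is a cocycle by the defining equations \eqref{eqn:choice-of-An}, $\delta M_{n,m}=\mu(\iota_n\times\iota_m)-\mu_{n,m}^*\iota_{n+m}$, and formula \eqref{eqn:coboundary-integral-formula}, hence a coboundary, and we are done; the other variable is symmetric. Graded commutativity is analogous: comparing $\mu(\hat{x},\hat{y})$ with $(-1)^{nm}\mu(\hat{y},\hat{x})$ uses the graded-commutativity of $\mu_{n,m}$ up to a canonical homotopy $T$ (the twist), the sign $(-1)^{nm}$ produced by the Eilenberg–Zilber/Alexander–Whitney twist on $B$ (exactly as in Proposition \ref{prop:mult-integral}), and the graded-commutativity of the cup product on cochains up to coboundary; again everything reduces to checking a cocycle condition, using that $M_{n,m}$ and $(-1)^{nm}\,\mathrm{twist}^*M_{m,n}$ differ by a coboundary since both cobound the graded-commutative expression $\mu(\iota_n\times\iota_m)-\mu_{n,m}^*\iota_{n+m}$.

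For the unit I would set $\hat{1}_S:=s(\mathrm{const}_{e})=[\,\mathrm{const}_{e\circ(\text{collapse})},\,1,\,0\,]$, where $1\in\Omega^0(S;V^0)$ is the constant function with value the unit of $V^0=E^0(pt)\otimes\bbR$ — this is equation \eqref{eqn:unit} to be recorded in the proof. Unwinding $\mu(\hat{1}_S,\hat{x})$ gives a triple with map $\mu_{0,m}\circ(\mathrm{const}_e,c)$, which is homotopic to $c$ because $E^*$ has a unit on the spectrum level; form-component $1\wedge\omega=\omega$; and the $h$-component reduces to $h$ up to the coboundary coming from that homotopy together with $B(1\otimes\omega)$ (which is a coboundary by Acyclic Models since $d(1\otimes\omega)=0$ and $1\wedge\omega-1\cup\omega=0$) and $(\mathrm{const}_e,c)^*M_{0,m}$ (a coboundary after restriction, again by the rationally-even vanishing). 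Proposition \ref{prop:homotopy} then yields $\mu(\hat{1}_S,\hat{x})=\hat{x}$; the right unit is symmetric.

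Associativity is the main obstacle, and I would model the argument on the coherence-condition computation already carried out in the excerpt and on the proof of \eqref{eqn:assoc-chi}. One expands both $\mu(\mu(\hat{x},\hat{y}),\hat{z})$ and $\mu(\hat{x},\mu(\hat{y},\hat{z}))$ into triples of the form $[\,\mu_{n+m,l}\circ(\mu_{n,m}\times\mathrm{id})\circ(c_0,c_1,c_2),\ \omega_0\wedge\omega_1\wedge\omega_2,\ h_{\mathrm{left/right}}\,]$, using freely the remark after the definition of $\chi$ that $h_0\cup\delta h_1$ may be traded for $(-1)^{|\omega_0|}\delta h_0\cup h_1$ up to coboundary so as to make the two expansions match term-by-term. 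The underlying maps differ by the canonical associator homotopy $H:\mu_{n+m,l}(\mu_{n,m}\times\mathrm{id})\simeq\mu_{n,m+l}(\mathrm{id}\times\mu_{m,l})$; by Proposition \ref{prop:homotopy} the two triples are equal once one checks that
\[
(\mu_{n,m}\times\mathrm{id}_l)^*M_{n+m,l}+M_{n,m}\cup\iota_l-(\mathrm{id}_n\times\mu_{m,l})^*M_{n,m+l}-\iota_n\cup M_{m,l}-\textstyle\int_I H^*\iota_{n+m+l}
\]
together with the two chain-homotopy discrepancies coming from the associativity of $B$ (chain-homotopic by Acyclic Models, hence differing by $\delta E$ on closed forms, as in the proof of \eqref{eqn:assoc-chi}) assembles into a coboundary. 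In even total degree this is, once more, only a cocycle check: the $\delta$ of each $M$-term is computed from its defining equation, the $\delta$ of $\int_I H^*\iota_{n+m+l}$ from \eqref{eqn:coboundary-integral-formula}, and the cross-terms cancel by the associativity of $\mu$ on coefficients and the Leibniz rule; what remains is $\delta E$ plus explicit cup-product coboundaries, so Lemma \ref{lem:cob-trans} concludes. The bookkeeping of signs and of which cup-term is differentiated is the delicate part, but it is entirely parallel to the $\chi$-associativity already written out, so I would present it compactly and relegate the full term-by-term matching to a remark or to \cite{meine_promotion}. Finally, functoriality of the product under smooth pullback (the parenthetical reference to Proposition \ref{prop:pullback-mult}) is immediate from naturality of $\chi$, of the transfer maps, of the change-of-cocycle isomorphisms, and of $B$.
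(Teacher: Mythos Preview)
Your overall strategy matches the paper's: reduce each axiom, via Proposition \ref{prop:homotopy} and Lemma \ref{lem:cob-trans}, to showing that an explicit error cochain on a product of $E_n$'s is a cocycle, hence a coboundary by Assumption \ref{as:rat-even}. Two points, however, are genuinely wrong as written.

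First, your proposed unit $\hat{1}_S=[\mathrm{const}_e,1,0]$ is in general \emph{not} a differential cocycle. The cocycle condition $\delta h=\omega-c^*\iota_0$ here reads $0=1-\pi_S^*u^*\iota_0$, which would force $u^*\iota_0\in C^0(pt;V)$ to equal the differential form $\omega_{pt}$ \emph{as a cochain}. Nothing in the choice \eqref{eqn:choice-fund-cocycle} of fundamental cocycles guarantees this; one only knows $[u^*\iota_0]=[\omega_{pt}]$ in cohomology. The paper repairs this by picking $U\in C^{-1}(pt;V)$ with $\delta U=\omega_{pt}-u^*\iota_0$ and setting $\hat{1}_S=[u\circ\pi_S,\pi_S^*\omega_{pt},\pi_S^*U]$. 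The rest of your unit argument then goes through, with $\pi_S^*U$ contributing an extra term $\pi_{E_n}^*U\cup\iota_n$ to the error cochain whose $\delta$ you must compute.

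Second, in your commutativity sketch you claim that $M_{n,m}$ and $\mathrm{twist}^*M_{m,n}$ ``both cobound $\mu(\iota_n\times\iota_m)-\mu_{n,m}^*\iota_{n+m}$'' and therefore differ by a coboundary. This is false: $\delta(\mathrm{twist}^*M_{m,n})=\mu(\iota_n\times\iota_m)-(\mu_{m,n}\circ\mathrm{twist})^*\iota_{n+m}$, and $\mu_{m,n}\circ\mathrm{twist}$ is only \emph{homotopic} to $\mu_{n,m}$, not equal to it. The correct statement, which the paper uses, is that
\[
M_{n,m}-\mathrm{twist}^*M_{m,n}+\int_I H^*\iota_{n+m}
\]
is a cocycle (hence a coboundary), where $H$ is the homotopy you yourself introduced; by \eqref{eqn:coboundary-integral-formula} the integral term is exactly what absorbs the discrepancy $(\mu_{m,n}\circ\mathrm{twist})^*\iota_{n+m}-\mu_{n,m}^*\iota_{n+m}$. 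The same remark applies to your associativity and bilinearity cochains: the homotopy integral is not an afterthought but is needed already to make the expression a cocycle.

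With these two fixes your argument is correct and essentially the paper's. The only organizational difference is that the paper packages associativity as a hexagon of $(\phi;\Theta)$-maps, invoking the composition rule and \eqref{eqn:comp-chi-maps} to isolate the single square \circled{4} where the cocycle check lives, rather than expanding both triple products directly.
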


\begin{proof}

We first define the \textbf{neutral elements}.
By naturality, the unit $1_S$ in $E^0(S)$ equals $\pi^*1_{pt}$ for the projection $\pi=\pi_S: S\longrightarrow \{pt\}$. Choose a representing map
$ u: \{pt\}\longrightarrow E_0 $
for $1_{pt}$. Then $V^*=E^*(pt)\otimes \bbR$ has unit $1_{pt}\otimes 1$. The singular cohomology $H^*(-;V)$ with coefficients in $V$ of course also has natural units coming from a cohomology class $1_{pt}\in H^0(pt; V)=H^0(pt;V^0)$. It may be represented by a cochain that comes from a differential form (under the deRham map)
$$ \omega_{pt}\in \Omega^0(pt;V)=\Omega^0(pt;V^0),\hspace{0.5cm} \omega_{pt}(pt):=1_{pt}\otimes 1 $$
Because the Chern character preserves units,
$ u^*[\iota_0]=ch([u])=ch(1_{pt})=1_{pt} $, 
we may write
$\delta U = \omega_{pt}-u^*\iota_0$
for some cochain $U\in C^{-1}(pt; V)$.
For a smooth manifold $S$ define the \emph{unit in differential cohomology}
\begin{equation}\label{eqn:unit}
\hat{1}_S=\big[ u\circ \pi_S, \pi_S^*\omega_{pt}, \pi_S^*U  \big]
\end{equation}
Notice that since $E$ is rationally even, the definition of $\hat{1}_S$ does not depend
on the choice of the ``refining cochain'' $U$. Clearly, $\pi_S^*\omega_{pt}$ is just the
unit of $\Omega^*(S;V)$.\\

First note that $B(\pi_S^*\omega_{pt}\otimes-)$ is a chain homotopy from $\eta\mapsto \pi_S^*\omega_{pt}\wedge \eta$ to $\eta\mapsto \pi_S^*\omega_{pt}\cup\eta$ -- just as the zero map is. It follows that $B(\pi_S^*\omega_{pt}\otimes\eta)$ is a coboundary for every cocycle $\eta$. Hence
\begin{align*}
\hat{1}_S&\cup \hat{x}\\=&\big[  \mu_{0,n}(u\pi_S,c),\pi_S^*\omega_{pt}\wedge\omega, B(\pi_S^*\omega_{pt}\otimes\omega) + \pi_S^*U\cup\omega+\pi_S^*\omega_{pt}\cup h -\pi_S^*U\cup\delta h+(u\pi_S,c)^*M_{0,n}  \big]\\
=&\Big[\mu_{0n}(u\pi_{E_n},id)\circ c,\omega, h+c^*\big(\pi_{E_n}^*U\cup\iota_n+(u \pi_{E_n},id)^*M_{0,n}  \big)\Big]
\end{align*}
selecting a homotopy $H: \mu_{0,n}\circ (u\pi_{E_n},id)\simeq id_{E_n}$ and using Proposition \ref{prop:homotopy} we conclude that it suffices to show that
$$ \pi_{E_n}^*U\cup \iota_n + (u\pi_{E_n},id)^*M_{0,n}-\int_I H^*\iota_n \in C^{n-1}(E_n;V) $$
is a coboundary. But it is easily checked to be a cocycle -- which suffices.\\

For \textbf{commutativity}
we first remark that $D(\omega_0\otimes \omega_1):=(-1)^{|\omega_0|\cdot|\omega_1|}B(\omega_1\otimes \omega_0)$ also satisfies $\delta D(\omega_0\otimes\omega_1)+D(d(\omega_0\otimes\omega_1))=\omega_0\wedge\omega_1-\omega_0\cup\omega_1$. Therefore $D$ and $B$ are naturally chain homotopic. Let $[c_0,\omega_0,h_0]\in \hat{E}^n(S)$ and $[c_1,\omega_1,h_1]\in \hat{E}^m(S)$. For $n,m$ even and $\omega_0,\omega_1$ closed it follows then that $B(\omega_0\otimes\omega_1)$ and $B(\omega_1\otimes\omega_0)$ differ by a coboundary.
Also $h_1\cup \delta h_0=\delta h_0\cup h_1=\delta(h_0\cup h_1)+h_0\cup\delta h_1\equiv h_0\cup\delta h_1$. Therefore, selecting a homotopy $H: \mu_{m,n}\circ twist\simeq \mu_{n,m}$,
\begin{align*}
[c_1,\omega_1,h_1]&\cup[c_0,\omega_0,h_0]\\
=\big[&\mu_{m,n}(c_1,c_0),\omega_1\wedge\omega_0,
B(\omega_1\otimes \omega_0)+h_1\cup\omega_0 + \omega_1\cup h_0 - h_1\cup\delta h_0+(c_1,c_0)^*M_{m,n}\big]\\
=\Big[&\mu_{n,m}\circ twist\circ (c_1,c_0),\omega_0\wedge\omega_1,\\
&B(\omega_0\otimes \omega_1)+\omega_0\cup h_1+h_0\cup \omega_1-h_0\cup\delta h_1 +(c_1,c_0)^*M_{m,n} - (c_0,c_1)^*\int_I H^*\iota_{n+m}\Big]
\end{align*}
But $M_{n,m}-twist^*M_{m,n}+\int_I H^*\iota_{n+m}$ is a coboundary since it is a cocycle.\\

For \textbf{associativity} it is clearly enough to prove the commutativity of the hexagon on the next page.
The commutativity of \circled{1} is simply the associativity statement for $\chi$ proven earlier. \circled{2} follows from (\ref{eqn:comp-chi-maps}):
$$ (id_n \times \mu_{m,l}; \iota_n\times M_{m,l})\circ\chi = \chi\circ ((id,0)\times (\mu_{m,l};M_{m,l})  ) $$
and \circled{3} is analogous. For \circled{4} select a homotopy $H: \mu_{n,m+l}\circ (id\times \mu_{m,l})\simeq \mu_{n+m,l}\circ (\mu_{n,m}\times id_l)$ and compare
\begin{align*}
(\mu_{n,m+l};M_{n,m+l})\circ &(id_n\times \mu_{m,l}; \iota_n\times M_{m,l})=(\mu_{n,m+l}\circ(id_n\times \mu_{m,l}); M_{n,m+l}+\mu_{n,m+l}^*(\iota_n\times M_{m,l}))\\
&=\big(\mu_{n+m,l}\circ(\mu_{n,m}\times id_l); M_{n,m+l}+\mu_{n,m+l}^*(\iota_n\times M_{m,l})-\int H^*\iota_{n+m+l}\big)
\end{align*}
with
\begin{align*}
\big(\mu_{n+m,l};M_{n+m,l})\circ &(\mu_{n,m}\times id_l; M_{n,m}\times\iota_l)=(\mu_{n+m,l}\circ(\mu_{n,m}\times id_l); M_{n+m,l}+\mu_{n+m,l}^*(M_{n,m}\times\iota_l)\big)
\end{align*}
But $M_{n,m+l}+\mu_{n,m+l}^*(\iota_n\times M_{m,l})-\int H^*\iota_{n+m+l}-M_{n+m,l}-\mu_{n+m,l}^*(M_{n,m}\times\iota_l)$ is a coboundary which is seen, as usual, by showing that is a cocycle.

\newpage${}$\hspace{13cm}
\begin{rotate}{-90}$$\small\xymatrix@R=2cm{
&	\hat{H}^n(S; E_n,\iota_n)\times \hat{H}^m(S; E_m,\iota_m)\times \hat{H}^l(S; E_l,\iota_l)\ar@{}[dd]^*++{\circled{1}}\ar[ld]_{\chi\times id}\ar[rd]^{id\times \chi}	&\\
\hat{H}^{n+m}(S; E_n\times E_m; \iota_n\times \iota_m)\times \hat{H}^l(S; E_l;\iota_l)\ar[rd]^\chi\ar[d]_{(\mu_{n,m};M_{n,m})\times id} &&
\hat{H}^n(S;E_n,\iota_n)\times \hat{H}^{m+l}(S; E_m\times E_l,\iota_m\times\iota_l)\ar[ld]_\chi\ar[d]^{id\times(\mu_{m,l}; M_{m,l})}\\
\hat{H}^{n+m}(S; E_{n+m},\iota_{n+m})\times \hat{H}^l(S;E_l,\iota_l)\ar[d]_\chi\ar@{}[r]^*++{\circled{2}}&
\hat{H}^{n+m+l}(S;E_n\times E_m\times E_l, \iota_n\times \iota_m\times \iota_l)\ar[ld]_*+++{(\mu_{n,m}\times id_l; M_{n,m}\times \iota_l)}\ar[rd]^*+++{(id_n\times \mu_{m,l}; \iota_n\times M_{m,l})}\ar@{}[dd]^*++{\circled{4}}&
\hat{H}^{n}(S;E_n,\iota_n)\times\hat{H}^{m+l}(S; E_{m+l},\iota_{m+l})\ar[d]^\chi\ar@{}[l]_*++{\circled{3}}\\
\hat{H}^{n+m+l}(S; E_{n+m}\times E_l,\iota_{n+m}\times \iota_l)\ar[dr]_*++{(\mu_{n+m,l},M_{n+m,l})}& &
\hat{H}^{n+m+l}(S; E_n\times E_{m+l},\iota_n\times \iota_{m+l})\ar[dl]^*++{(\mu_{n,m+l},M_{n,m+l})}\\
&\hat{H}^{n+m+l}(S; E_{n+m+l},\iota_{n+m+l})&\\
}$$\end{rotate}\newpage

\textbf{Bilinearity} may be proven similarly, and will be ommited.

\end{proof}

\begin{proposition}\label{prop:pullback-mult}
For a smooth map $f: S_1 \rightarrow S_2$ the induced map
$$ f^*: \hat{E}^n(S_2)\longrightarrow  \hat{E}^n(S_1)$$
is multiplicative in even degrees. Moreover, $f^*$ is unital:
$ f^*(\hat{1}_{S_2})=\hat{1}_{S_1} $.
\end{proposition}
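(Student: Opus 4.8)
The plan is to prove multiplicativity by unwinding the explicit formula for the product and checking that $f^*$ commutes with every ingredient, so that $f^*(\hat x\cup\hat y)$ and $f^*\hat x\cup f^*\hat y$ are represented by literally the same triple; unitality then reads off from $\pi_{S_2}\circ f=\pi_{S_1}$. Fix even $n,m$ and choose representatives $\hat x=[c_0,\omega_0,h_0]\in\hat E^n(S_2)$ and $\hat y=[c_1,\omega_1,h_1]\in\hat E^m(S_2)$. Applying $f^*$ to the explicit formula for $\mu(\hat x,\hat y)$ gives the triple with map component $\mu_{n,m}(c_0,c_1)\circ f$, form component $f^*(\omega_0\wedge\omega_1)$, and cochain component $f^*$ applied to $B(\omega_0\otimes\omega_1)+h_0\cup\omega_1+(-1)^n\omega_0\cup h_1-h_0\cup\delta h_1+(c_0,c_1)^*M_{n,m}$. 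On the other hand $\mu(f^*\hat x,f^*\hat y)$ is represented by the triple built from $c_0\circ f,\,c_1\circ f,\,f^*\omega_0,\,f^*\omega_1,\,f^*h_0,\,f^*h_1$ in the same formula.

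The two triples agree term by term. The map components agree because $(c_0,c_1)\circ f=(c_0\circ f,c_1\circ f)$, hence $\mu_{n,m}(c_0,c_1)\circ f=\mu_{n,m}(c_0\circ f,c_1\circ f)$; the form components agree because pullback of forms is multiplicative for $\wedge$; and the cochain components agree because $f^*$ commutes with $\delta$, is multiplicative for the cup product of cochains, is functorial for pullback of cochains along maps of spaces (so $f^*(c_0,c_1)^*M_{n,m}=(c_0\circ f,c_1\circ f)^*M_{n,m}$), and commutes with the Acyclic Models chain homotopy $B$, i.e. $f^*B(\omega_0\otimes\omega_1)=B(f^*\omega_0\otimes f^*\omega_1)$, since $B$ was chosen to be a \emph{natural} chain homotopy. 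Therefore $f^*(\hat x\cup\hat y)=f^*\hat x\cup f^*\hat y$ already at the level of representatives, which is the claimed multiplicativity in even degrees.

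For the unit, recall from (\ref{eqn:unit}) that $\hat 1_S=[\,u\circ\pi_S,\ \pi_S^*\omega_{pt},\ \pi_S^*U\,]$. Since any map to a one-point space satisfies $\pi_{S_2}\circ f=\pi_{S_1}$, pulling this triple back along $f$ yields $[\,u\circ\pi_{S_1},\ \pi_{S_1}^*\omega_{pt},\ \pi_{S_1}^*U\,]=\hat 1_{S_1}$, i.e. $f^*(\hat 1_{S_2})=\hat 1_{S_1}$. The only point that even invites scrutiny is the naturality of $B$; but as $B$ is produced by the Acyclic Models Theorem as a natural chain homotopy this is automatic, and even with only naturality up to chain homotopy two choices of $B$ differ by a natural coboundary, so Lemma \ref{lem:cob-trans} would still force the differential cocycles to coincide. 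In effect there is no obstacle: the proposition is a formal consequence of the naturality of every ingredient entering the product formula.
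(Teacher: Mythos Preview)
Your proof is correct and follows exactly the paper's approach: both compute that $f^*(\hat x\cup\hat y)$ and $f^*\hat x\cup f^*\hat y$ are represented by the \emph{same} triple, using naturality of $B$ and of all other ingredients in the explicit product formula. You are in fact more thorough than the paper, which omits the unitality argument and the remark about $B$ being natural only up to chain homotopy.
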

\begin{proof}
Both $f^*\left([c_0,\omega_0,h_0]\cup [c_1,\omega_1,h_1]  \right)$ and
$ f^*[c_0,\omega_0,h_0]\cup f^*[c_1,\omega_1,h_1] $ are equal to
$$ \left[\mu_{n,m}(c_0,c_1)\circ f, f^*\omega_0\wedge f^*\omega_1, f^*(B(\omega_0\otimes\omega_1)+h_0\cup \omega_1+(-1)^n\omega_0\cup h_1 - h_0\cup\delta h_1 ) +f^*(c_0,c_1)^*M_{n,m}   \right] $$
using the fact that $B$ is natural. 
\end{proof}

\begin{proposition}
Let $n,m$ be even and $\Theta\in \Omega^{n-1}(S;V), \hat{x}=[c,\omega,h]\in \hat{E}^m(M)$. Then we have
\begin{equation}\label{eqn:partial-a-mult-compatible}
a(\Theta)\cup\hat{x}=a(\Theta\wedge R(\hat{x}))
\end{equation}
\end{proposition}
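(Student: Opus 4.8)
The plan is to expand both sides with the explicit formula for the even-degree product and then invoke Lemma~\ref{lem:cob-trans}. Write $a(\Theta)=[const_{pt\in E_n},d\Theta,\Theta]$ and $\hat{x}=[c,\omega,h]$, so that $\omega=R(\hat{x})$ is closed. Plugging $(const,d\Theta,\Theta)$ for $[c_0,\omega_0,h_0]$ and $(c,\omega,h)$ for $[c_1,\omega_1,h_1]$ into the displayed product formula (suppressing the coefficient pairing $\mu$, as in the text), the map component becomes $\mu_{n,m}\circ(const,c)$. Since $(const,c)$ has image in $\{pt\}\times E_m\subset E_n\vee E_m$, which is collapsed by the quotient $E_n\times E_m\to E_n\wedge E_m$, this map is constant at the basepoint of $E_{n+m}$; for the same reason the correction term $(const,c)^*M_{n,m}$ vanishes, $M_{n,m}$ being a \emph{reduced} cochain on the smash product. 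Hence
\[
a(\Theta)\cup\hat{x}=\Bigl[const,\; d\Theta\wedge\omega,\; B(d\Theta\otimes\omega)+\Theta\cup\omega+(-1)^n d\Theta\cup h-\Theta\cup\delta h\Bigr].
\]

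On the other side, $a(\Theta\wedge R(\hat{x}))=a(\Theta\wedge\omega)=[const,\,d(\Theta\wedge\omega),\,\Theta\wedge\omega]$, and $d(\Theta\wedge\omega)=d\Theta\wedge\omega$ because $\omega$ is closed. So the two triples have identical map and form components, and by Lemma~\ref{lem:cob-trans} it remains only to show that their $h$-components differ by the coboundary of a smooth cochain.

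For that, apply the Acyclic Models chain homotopy $B$ to $\Theta\otimes\omega$: since $\omega$ is closed, $d(\Theta\otimes\omega)=d\Theta\otimes\omega$, whence $\delta B(\Theta\otimes\omega)+B(d\Theta\otimes\omega)=\Theta\wedge\omega-\Theta\cup\omega$, i.e. $\Theta\wedge\omega=\Theta\cup\omega+B(d\Theta\otimes\omega)+\delta B(\Theta\otimes\omega)$. Substituting this into the $h$-component of the right-hand side and subtracting, the difference of the two $h$-components collapses to $(-1)^n d\Theta\cup h-\Theta\cup\delta h-\delta B(\Theta\otimes\omega)$. Now use $d\Theta=\delta\Theta$ together with $\delta(\Theta\cup h)=\delta\Theta\cup h+(-1)^{n-1}\Theta\cup\delta h$ to rewrite $(-1)^n d\Theta\cup h=(-1)^n\delta(\Theta\cup h)+\Theta\cup\delta h$; the $\Theta\cup\delta h$ terms cancel and the difference becomes $\delta\bigl((-1)^n\Theta\cup h-B(\Theta\otimes\omega)\bigr)$, a coboundary. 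All cochains occurring here being smooth, Lemma~\ref{lem:cob-trans} applies and concludes the proof.

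I expect the one point genuinely needing care to be the identification in the first paragraph — that the map component becomes the constant map and that $(const,c)^*M_{n,m}$ drops out — since that is precisely where one uses that $M_{n,m}$ lives on the smash $E_n\wedge E_m$ and is reduced; everything afterwards is bookkeeping with $B$, the identity $d\Theta=\delta\Theta$, and the Leibniz rule for $\delta$ on cup products.
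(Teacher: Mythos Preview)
Your argument is correct and in fact cleaner than the paper's at the one point you flag. The computational core---reducing the $h$-component via the chain homotopy $B$ applied to $\Theta\otimes\omega$ together with the Leibniz rule for $\delta$ on cup products---is the same as in the paper. Where you diverge is in handling the map component and the $M_{n,m}$ correction: you observe that because $\mu_{n,m}$ and $M_{n,m}$ are defined on the smash $E_n\wedge E_m$ and $(const,c)$ factors through the wedge, the composite $\mu_{n,m}\circ(const,c)$ is literally the constant map and $(const,c)^*M_{n,m}=0$. The paper instead keeps $\mu_{n,m}(const,c)$ as it stands, introduces a homotopy $H:\mu_{n,m}\circ(const\times id_m)\simeq const$, applies Proposition~\ref{prop:homotopy}, and then must verify that $(const\times id)^*M_{n,m}-\int_I H^*\iota_{n+m}$ is a cocycle (hence a coboundary by the rationally even assumption). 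Your route is shorter and does not invoke the rationally even hypothesis at this step; the paper's route is more in keeping with its general pattern of reducing everything to a ``cocycle therefore coboundary'' check.
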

\begin{proof}
Select a homotopy $H: \mu_{n,m}\circ (const\times id_m)\simeq const$. Now compute
\begin{align*}
a(\Theta)\cup\hat{x}&=[\mu_{n,m}(const,c),d\Theta\wedge\omega, B(d\Theta\otimes\omega)+\Theta\cup\omega+(-1)^n d\Theta\cup h-\Theta\cup\delta h +(const,c)^*M_{n,m}]\\
&= [\mu_{n,m}(const,c), d\Theta\wedge\omega, \Theta\wedge\omega + (const,c)^*M_{n,m}]
\end{align*}
because $B(d\Theta\otimes\omega)\equiv \Theta\wedge\omega-\Theta\cup\omega$ ($\omega$ is closed) and because $(-1)^n d\Theta\cup h-\Theta\cup\delta h$ is a coboundary. We have to compare with
$$ a(\Theta\wedge R(\hat{x}))=[const, d\Theta\wedge\omega,\Theta\wedge\omega]$$
It thus suffices to show that the following is a coboundary -- by applying $\delta$:
$$ (const\times id)^*M_{n,m}-\int_I H^*\iota_{n+m} \in C^{n+m-1}(E_n\times E_m; V)$$
\end{proof}

\subsection{Extension to Odd Degrees}

The extension to odd degrees relies on the following lemma:

\begin{lemma}\label{lem:extension-odd}
For any $\hat{x}\in \hat{E}^{n-1}(M)$ there exists $\hat{X}\in\hat{E}^n(S^1\times M, 1\times M)$ with $\int_{S^1} \hat{X}=\hat{x}$. For any two choices $\hat{X}, \hat{X}'$ there exists a differential form $\Theta\in \Omega^{n-1}(S^1\times M, 1\times M;V)$ with
$\hat{X}-\hat{X}'=a(\Theta)$ and such that $\int_{S^1}\Theta$ lies in the image of the Chern character.
\end{lemma}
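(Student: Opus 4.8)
The plan is to split the statement into an \emph{existence} and a \emph{uniqueness} part. For existence, recall from Proposition \ref{prop:abs-int-welldef} that $\int_{S^1} : \hat{E}^n(S^1 \times M, 1\times M) \to \hat{E}^{n-1}(M)$ is built from the honest integration on pairs, which in turn is modelled levelwise by the maps $\int_{S^1}$ on forms, on smooth cochains, and on mapping spaces $c \mapsto (\varepsilon_{n-1}^{adj})^{-1}\circ c^{adj}$. So I would first establish surjectivity of integration on pairs and then conclude. Concretely, given $\hat{x} = [c,\omega,h] \in \hat{E}^{n-1}(M)$, I would lift each datum: the map $c : M \to E_{n-1}$ lifts to $\tilde c := \varepsilon_{n-1}\circ \Sigma c : (S^1 \times M, 1\times M) \to (E_n, pt)$ (so that $\int_{S^1}\tilde c = c$ by the defining relation $\varepsilon_{n-1}\circ\Sigma(\int_{S^1}\tilde c) = \tilde c$); the form $\omega \in \Omega^{n-1}_{cl}(M;V)$ lifts to $[S^1]\times\omega \in \Omega^n_{cl}(S^1\times M, 1\times M;V)$, whose integral is $\omega$; and similarly $h$ lifts to $-[S^1]\times h$ in smooth relative cochains so that $-\int_{S^1}(-[S^1]\times h)=h$ up to the sign bookkeeping built into the definition of $\int_{S^1}$ on triples. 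The compatibility $\delta(\text{lift of }h) = (\text{lift of }\omega) - \tilde c^*\iota_n$ then has to be checked; here one uses the Eilenberg--Zilber/pullback formulas (\ref{eqn:coboundary-integral-formula})--(\ref{eqn:pullback-formula}) together with the defining property (\ref{eqn:choice-fund-cocycle}) of the fundamental cocycles, $\iota_{n-1} = \int_{S^1}\varepsilon_{n-1}^*\iota_n$. This produces $\hat{X}$ with $\int_{S^1}\hat{X} = \hat{x}$.

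For uniqueness, suppose $\hat{X},\hat{X}' \in \hat{E}^n(S^1\times M, 1\times M)$ both integrate to $\hat{x}$. Then $\hat{X} - \hat{X}' \in \hat{E}^n(S^1\times M, 1\times M)$ lies in the kernel of $\int_{S^1}$, and by Proposition \ref{prop:integral-RIa-compatible} (integration commutes with $R$ and $I$) its class under $I$ lies in $\ker\big(\int_{S^1}: E^n(S^1\times M,1\times M) \to E^{n-1}(M)\big)$. Since $\int_{S^1}$ on the pair $(S^1\times M, 1\times M)$ is an \emph{isomorphism} in plain cohomology (it is the suspension isomorphism, as noted right after (\ref{eqn:susp})), we get $I(\hat{X}-\hat{X}') = 0$, hence by exactness in Theorem \ref{thm:axiom-verific} we may write $\hat{X}-\hat{X}' = a(\Theta)$ for some $\Theta \in \Omega^{n-1}(S^1\times M, 1\times M;V)/\mathrm{im}(d)$; pick a form representative, again denoted $\Theta$. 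It remains to see that $\int_{S^1}\Theta$ lies in the image of $ch$. Apply $\int_{S^1}$ to the relation $\hat{X}-\hat{X}' = a(\Theta)$: since integration anticommutes with $a$ (Proposition \ref{prop:integral-RIa-compatible}) and kills $\hat{X}-\hat{X}'$, we get $0 = \pm a\big(\int_{S^1}\Theta\big)$ in $\hat{E}^{n-1}(M)$, so $\int_{S^1}\Theta$ lies in $\ker a = \mathrm{im}(ch)$ by the exact sequence of Theorem \ref{thm:axiom-verific}. This is exactly the claimed property.

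The main obstacle I expect is the bookkeeping in the existence part: one must verify that the levelwise lifts of $(c,\omega,h)$ actually assemble into a legitimate differential cocycle on the pair $(S^1\times M, 1\times M)$, i.e.\ that $\delta(\text{lift of }h)$ equals $(\text{lift of }\omega) - (\text{lift of }c)^*\iota_n$ as \emph{relative} smooth cochains. This forces one to be careful about (i) the sign conventions in $\int_{S^1}$ on triples, (ii) the interplay of the Eilenberg--Zilber map $B$ with $\delta$ via (\ref{eqn:coboundary-integral-formula}), and (iii) the fact that $\iota_n$ restricted along $1\times M$ must vanish, which is where the choice of $\iota_n$ as a \emph{reduced} cocycle and the relation (\ref{eqn:choice-fund-cocycle}) enter. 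The uniqueness part, by contrast, is essentially a diagram chase through the already-established compatibilities of $\int_{S^1}$ with $a$, $R$, $I$ and the exact sequence of Theorem \ref{thm:axiom-verific}, and should be routine.
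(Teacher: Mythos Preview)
Your uniqueness argument is correct and is exactly what the paper does: use that $\int_{S^1}$ on the pair is the suspension isomorphism in $E$-cohomology to force $I(\hat X-\hat X')=0$, apply the exact sequence of Theorem~\ref{thm:axiom-verific} to write $\hat X-\hat X'=a(\Theta)$, and then integrate to land $\int_{S^1}\Theta$ in $\ker a=\mathrm{im}(ch)$.

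For existence you take a genuinely different route from the paper, and there is a gap. Your plan is to lift $(c,\omega,h)$ levelwise to $(\tilde c,\tilde\omega,\tilde h)=(\varepsilon_{n-1}\circ\Sigma c,\ [S^1]\times\omega,\ -[S^1]\times h)$ and then verify $\delta\tilde h=\tilde\omega-\tilde c^*\iota_n$ using (\ref{eqn:coboundary-integral-formula}), (\ref{eqn:pullback-formula}), and (\ref{eqn:choice-fund-cocycle}). The problem is that all three of these identities control only the $S^1$-\emph{integral} of cochains, not the cochains themselves. In particular, (\ref{eqn:choice-fund-cocycle}) tells you $\int_{S^1}\tilde c^*\iota_n=c^*\iota_{n-1}$, but says nothing about $\tilde c^*\iota_n$ as a cochain on $S^1\times M$; there is no reason it should equal your proposed $[S^1]\times c^*\iota_{n-1}$, and likewise the cochain cross product $[S^1]\times\omega$ does not coincide with the form $dt\wedge pr_2^*\omega$ (they differ by the Eilenberg--Zilber correction $B$). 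So with the lifts as written the cocycle condition simply fails; to rescue the approach you would have to add non-explicit coboundary corrections to $\tilde h$ and then further adjust by a closed form so that $-\int_{S^1}\tilde h$ agrees with $h$ modulo coboundaries, not merely modulo cocycles. That is doable, but it is more work than your plan suggests and the listed tools do not suffice.

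The paper avoids all of this cochain bookkeeping. For existence it argues abstractly: using surjectivity of $I$ together with the isomorphism $\int_{S^1}\colon E^n(S^1\times M,1\times M)\cong E^{n-1}(M)$, pick any $\hat X$ with $\int_{S^1}I(\hat X)=I(\hat x)$; then $\hat x-\int_{S^1}\hat X=a(\Theta)$ for some $\Theta\in\Omega^{n-2}(M;V)$, and one corrects $\hat X$ by $a(\tilde\Theta)$ where $\tilde\Theta=-\alpha(t)\,dt\wedge pr_2^*\Theta$ for a bump function $\alpha$ with $\int_{S^1}\alpha\,dt=1$ and $\alpha(1)=0$. This uses only the exact sequence and the compatibility of $\int_{S^1}$ with $I$ and $a$, never touching the cocycle condition at the cochain level.
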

\begin{proof}
\textbf{Existence}. By surjectivity
$\hat{E}^n(S^1\times M, 1\times M)\overset{I}{\twoheadrightarrow} E^n(S^1\times M,1\times M)\overset{\int_{S^1}}{\cong} E^{n-1}(M)$ we may choose $\hat{X}\in\hat{E}^n(S^1\times M,1\times M)$ with $\int_{S^1} I(\hat{X})=I(\hat{x})$. Then $\hat{x}-\int_{S^1}{\hat{X}}$ lies in the kernel of $I$ and
thus equals $a(\Theta)$ for some $\Theta\in \Omega^{n-2}(M;V)$. Pick $\alpha: S^1\rightarrow \bbR$ smooth with $\int_{S^1} \alpha(t)dt=1$ and $\alpha(1)=0$ and let $\tilde{\Theta}=-\alpha(t)dt\wedge pr^*_2\Theta \in \Omega^{n-1}(S^1\times M, 1\times M;V)$. Then $\int_{S^1}\tilde{\Theta}=-\Theta$, and $\hat{X}+a(\tilde{\Theta})$ is the element we seek.

\textbf{Uniqueness}. We have $\int_{S^1} I(\hat{X}-\hat{X}')=0$ so that $I(\hat{X}-\hat{X}')$ lies in
the kernel of $\int_{S^1}: E^n(S^1\times M, 1\times M)\cong E^{n-1}(M)$, which is zero.
Thus, $I(\hat{X}-\hat{X}')=0$ so that $\hat{X}-\hat{X}'=a(\Theta)$ for some $\Theta \in \Omega^{n-1}(S^1\times M, 1\times M;V)$. Since $a\int_{S^1}\Theta=-\int_{S^1} a(\Theta)=-\int_{S^1}(\hat{X}-\hat{X}')=0$, the element $\int_{S^1} \Theta$ lies in the image of the Chern character.
\end{proof}
\noindent
We will usually denote a choice of element $\hat{X}\in \hat{E}^n(S^1\times M)$ with $\int \hat{X}=\hat{x}$ by \emph{upper case}.\\

The internal product $\cup$ in even degrees in differential cohomology is equivalent to an associative, bilinear, and commutative\footnote{Meaning $\hat{x}\times \hat{y}=twist^*(\hat{y}\times\hat{x})$} external product $\times$ in even degrees via the usual formulas
\begin{align*}
\hat{x}\times\hat{y}&=pr_1^*\hat{x}\cup pr_2^*\hat{y}\\
\hat{x}\cup\hat{y}&=\Delta^*(\hat{x}\times\hat{y})
\end{align*}
The exterior wedge product of differential forms is denoted by $\overline{\wedge}$.
Explicitly, $\hat{x}\times\hat{y}$ is
\begin{align*}
[c_1,&\omega_1,h_1]\times[c_2,\omega_2,h_2]\\
&=\big[   \mu_{n,m}(c_1\times c_2),\; \omega_1\overline{\wedge}\omega_2,\; B(\omega_1\otimes\omega_2)+h_1\times \omega_2+(-1)^{|\omega_1|}\omega_1\times h_2 - h_1\times\delta h_2 + (c_1\times c_2)^*M_{n,m}   \big]
\end{align*}

\begin{definition}\label{def:mult-ext} Let $n,m$ be even. Define $\hat{x}\times\hat{y}$ by
\begin{align}
&\int_{S^1}\hat{X}\times\hat{y}		&&\text{for }\hat{x}\in \hat{E}^{n-1}(M), \hat{y}\in \hat{E}^{m}(N),\label{eqn:mult-odd-1}\\
&\int_{S^1}\hat{x}\times \hat{Y}		&&\text{for }\hat{x}\in \hat{E}^n(M),\;\;\,\, \hat{y}\in \hat{E}^{m-1}(N),\label{eqn:mult-odd-2}\\
&\int_{S^1}\hat{X}\times\hat{y}=-\int_{S^1}\hat{x}\times\hat{Y}		&&\text{for }\hat{x}\in \hat{E}^{n-1}(M), \hat{y}\in \hat{E}^{m-1}(N).\label{eqn:mult-odd-3}
\end{align}
where $\hat{X}\in \hat{E}^{|\hat{x}|+1}(S^1\times M),\, \hat{Y}\in \hat{E}^{|\hat{y}|+1}(S^1\times N)$ are absolute classes with $\int_{S^1} \hat{X}=\hat{x}, \int_{S^1} \hat{Y}=\hat{y}$.
\end{definition}

\begin{proposition}
The above is well-defined.
Moreover, in all degrees we have the formula
\begin{equation}\label{eqn:aRtimes}
a(\Theta)\times\hat{y}=a(\Theta\times R(\hat{y})),\hspace{1cm} \forall\Theta\in \Omega^{n-1}(M;V), \hat{y}\in \hat{E}^m(N)
\end{equation}
\end{proposition}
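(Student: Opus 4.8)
The plan is to reduce everything to the even-degree multiplication already constructed (Theorem~\ref{thm:products-even-degrees}), to the identity (\ref{eqn:partial-a-mult-compatible}), and to the multiple-integral relation of Proposition~\ref{prop:mult-integral}, transported along $\int_{S^1}$.

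First I would record the tools. Passing from $\cup$ to the external product $\times$ by pulling back along the two projections, Theorem~\ref{thm:products-even-degrees} shows $\times$ is bilinear, associative and graded commutative on even classes, and (\ref{eqn:partial-a-mult-compatible}) becomes $a(\Theta)\times\hat{y}=a(\Theta\times R(\hat{y}))$ whenever both degrees are even. Second, $a$ annihilates every closed form whose de~Rham class lies in the image of the (multiplicative) Chern character $E^*\to H^*(-;V)$, since such a form differs by an exact form from a representative of $ch(z)$, and $a\circ ch=0$ by Theorem~\ref{thm:axiom-verific} while $a$ vanishes on $\mathrm{im}(d)$ by Lemma~\ref{lem:cob-trans}. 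Third, for differential forms $\int_{S^1}$ obeys the Fubini identity $\int_{S^1}(\eta\times\gamma)=\pm(\int_{S^1}\eta)\times\gamma$ when $\gamma$ is pulled back from a factor not meeting the circle being integrated out, and $\int_{S^1}$ commutes with $R,I$ and anticommutes with $a$ (Proposition~\ref{prop:integral-RIa-compatible}).

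Next, well-definedness. For (\ref{eqn:mult-odd-1}) let $\hat{X},\hat{X}'$ be two lifts of $\hat{x}\in\hat{E}^{n-1}(M)$; by Lemma~\ref{lem:extension-odd} we have $\hat{X}-\hat{X}'=a(\Theta)$ with $\int_{S^1}\Theta$ representing, modulo exact forms, a class in $\mathrm{im}(ch)$. Then $(\hat{X}-\hat{X}')\times\hat{y}=a(\Theta)\times\hat{y}=a(\Theta\times R(\hat{y}))$ by the even-degree case of (\ref{eqn:partial-a-mult-compatible}), so applying $\int_{S^1}$, using the anticommutation with $a$ and the Fubini identity, the difference $\int_{S^1}(\hat{X}\times\hat{y})-\int_{S^1}(\hat{X}'\times\hat{y})$ equals $\pm a\!\big((\int_{S^1}\Theta)\times R(\hat{y})\big)$. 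Since $R(\hat{y})$ is closed with de~Rham class $ch(I\hat{y})$ and $ch$ is multiplicative, $(\int_{S^1}\Theta)\times R(\hat{y})$ is again a closed form whose class lies in $\mathrm{im}(ch)$, hence killed by $a$; thus (\ref{eqn:mult-odd-1}) is independent of $\hat{X}$, and (\ref{eqn:mult-odd-2}) likewise. For (\ref{eqn:mult-odd-3}) one expands both candidates: reading $\hat{X}\times\hat{y}$ through (\ref{eqn:mult-odd-2}) on $S^1\times M$ and $\hat{x}\times\hat{Y}$ through (\ref{eqn:mult-odd-1}) on $S^1\times N$, both sides become the two iterated integrals of the even-degree class $\hat{X}\times\hat{Y}$ over the two circle factors of $S^1\times M\times S^1\times N\cong S^1\times S^1\times(M\times N)$, taken in opposite orders; Proposition~\ref{prop:mult-integral} is exactly the statement that these differ by the sign appearing in (\ref{eqn:mult-odd-3}), and each of them is independent of the lifts by what was just shown.

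Finally, (\ref{eqn:aRtimes}) in all degrees. The even--even case is the external form of (\ref{eqn:partial-a-mult-compatible}). Otherwise, as a lift of $a(\Theta)$ I would take $a(\tilde{\Theta})\in\hat{E}^{*}(S^1\times M)$ with $\tilde{\Theta}=-\alpha(t)\,dt\wedge pr_2^*\Theta$ as in the proof of Lemma~\ref{lem:extension-odd}, so that $\int_{S^1}\tilde{\Theta}=-\Theta$ and $\int_{S^1}a(\tilde{\Theta})=a(\Theta)$ by Proposition~\ref{prop:integral-RIa-compatible}. Substituting this lift into Definition~\ref{def:mult-ext}, applying the even-degree identity $a(\tilde{\Theta})\times(\,\cdot\,)=a(\tilde{\Theta}\times R(\,\cdot\,))$, moving $a$ outside the (one or two) integrals, and using the Fubini identity together with $\int_{S^1}\tilde{\Theta}=-\Theta$ yields $a(\Theta\times R(\hat{y}))$; in the odd--odd case Proposition~\ref{prop:mult-integral} is invoked once more. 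The main obstacle is not conceptual but is the sign bookkeeping: one has to check that the Koszul sign in the Fubini identity, the minus sign in $\int_{S^1}\tilde{\Theta}=-\Theta$, the anticommutation sign of $\int_{S^1}$ with $a$, and the sign of Proposition~\ref{prop:mult-integral} combine to reproduce precisely the signs written into Definition~\ref{def:mult-ext} and (\ref{eqn:aRtimes}); all the geometric content lies in the even-degree theory and in the behaviour of $\int_{S^1}$.
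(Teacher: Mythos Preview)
Your overall strategy coincides with the paper's, but there is a genuine gap in the well-definedness argument for (\ref{eqn:mult-odd-1}). You write ``by Lemma~\ref{lem:extension-odd} we have $\hat{X}-\hat{X}'=a(\Theta)$'', but Lemma~\ref{lem:extension-odd} concerns \emph{relative} lifts in $\hat{E}^n(S^1\times M,1\times M)$, whereas Definition~\ref{def:mult-ext} uses \emph{absolute} lifts $\hat{X},\hat{X}'\in\hat{E}^n(S^1\times M)$. For absolute classes the kernel of $\int_{S^1}$ is strictly larger: it contains all of $pr_2^*\hat{E}^n(M)$, so for instance $\hat{X}'=\hat{X}+pr_2^*\hat{z}$ is another absolute lift, and $\hat{X}-\hat{X}'=-pr_2^*\hat{z}$ is in general \emph{not} of the form $a(\Theta)$. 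Your argument therefore does not cover all pairs of lifts.

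The paper closes this gap by first passing to relative classes: one chooses $\hat{Z},\hat{Z}'\in\hat{E}^n(S^1\times M,1\times M)$ with $j^*\hat{Z}=\hat{X}-pr_2^*i^*\hat{X}$ and similarly for $\hat{Z}'$ (Theorem~\ref{thm:exact-seq-pairs}). Then $\int_{S^1}\hat{Z}=\int_{S^1}\hat{X}=\hat{x}=\int_{S^1}\hat{Z}'$, so Lemma~\ref{lem:extension-odd} legitimately gives $\hat{Z}-\hat{Z}'=a(\Theta)$ with $\int_{S^1}\Theta\in\mathrm{im}(ch)$, and your computation applies verbatim to $\hat{Z}-\hat{Z}'$. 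One must then also observe that $\int_{S^1}\big((pr_2^*i^*\hat{X})\times\hat{y}\big)=\int_{S^1}(pr_2\times id_N)^*\big((i^*\hat{X})\times\hat{y}\big)=0$ since $\int_{S^1}\circ pr_2^*=0$; this kills the discrepancy between $\hat{X}$ and $j^*\hat{Z}$. With this additional reduction inserted, the remainder of your proof---the consistency of (\ref{eqn:mult-odd-3}) via Proposition~\ref{prop:mult-integral} and the verification of (\ref{eqn:aRtimes}) by lifting $a(\Theta)$ to $a(\tilde{\Theta})$---matches the paper's approach.
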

\begin{proof}
For the asserted equality in (\ref{eqn:mult-odd-3}) we calculate
\begin{align*}
\int_{S^1} \hat{X}\times\hat{y} &= \int_{S^1} \int_{S^1}^{'} \hat{X}\times\hat{Y}=-\int_{S^1}\int_{S^1} \hat{X}\times\hat{Y}=-\int_{S^1} \hat{x}\times \hat{Y}
\end{align*}
where the slash indicates that we integrate over the second copy of $S^1$. Note that the middle equality follows from Proposition \ref{prop:mult-integral} since $\hat{X}, \hat{Y}$ are of even degree. Next, let us show that (\ref{eqn:mult-odd-1}) is indeed well-defined:
Thus, suppose $\int \hat{X}=\hat{x}=\int \hat{X}'$ and
write $j^*\hat{Z}=\hat{X}-pr_2^*i^*\hat{X}$ and
$j^*\hat{Z}'=\hat{X}'-pr_2^*i^*\hat{X}'$, using Theorem \ref{thm:exact-seq-pairs}, for relative classes $\hat{Z}, \hat{Z}' \in \hat{E}^n(S^1\times M, 1\times M)$, where
$$j: S^1\times M \subset (S^1\times M, 1\times M),\;\; i: 1\times M\subset S^1\times M,\;\; pr_2: S^1\times M\twoheadrightarrow 1\times M$$
Using the rules in Proposition \ref{prop:abs-int-welldef}, it follows that $\int \hat{Z}=\int \hat{X}=\hat{x}=\int \hat{X}'=\int \hat{Z}'$ so that Lemma \ref{lem:extension-odd} implies that $\hat{Z}$ and $\hat{Z}'$ differ by $a(\Theta)$ with $\int_{S^1} \Theta\in can^{-1}(im(ch))$.
Since $R$ has image contained in $can^{-1}(im(ch))$ and since $ch$ is multiplicative in cohomology, $\int_{S^1}\Theta \overline{\wedge} R(\hat{y})$
also represents an element in the image of the Chern character. Thus,
$$ \int_{S^1} a(\Theta)\times \hat{y}\overset{(\ref{eqn:partial-a-mult-compatible})}{=}\int_{S^1} a(\Theta\overline{\wedge} R(\hat{y}))=-a\left( \int_{S^1} \Theta\overline{\wedge} R(\hat{y})  \right)=0$$
which shows that $\int (\hat{X}-\hat{X}')\times \hat{y}=\int (\hat{Z}-\hat{Z}')\times \hat{y}=0$ so
that (\ref{eqn:mult-odd-1}) is really well-defined.\\

Also, we still have $a(\Theta)\times\hat{y}=a(\Theta\overline{\wedge} R(\hat{y}))$ for $\Theta \in \Omega^{n-2}(M;V)$, $\hat{y}\in \hat{E}^m(N)$ and $n,m$ even: Pick $\tilde{\Theta}\in \Omega^{n-1}(S^1\times M;V)$ with $-\int_{S^1} \tilde{\Theta}=\Theta$. Then $\int_{S^1}a(\tilde{\Theta})=a(\Theta)$ and by definition (\ref{eqn:mult-odd-1}):
$$ a(\Theta)\times \hat{y}  \overset{(\ref{eqn:mult-odd-1})}{=}  \int_{S^1} a(\tilde{\Theta})\times\hat{y}\overset{(\ref{eqn:partial-a-mult-compatible})}{=}\int_{S^1} a(\tilde{\Theta}\,\overline{\wedge}\, R(\hat{y}))=a\left(-\int_{S^1}\tilde{\Theta}\,\overline{\wedge}\, R(\hat{y}) \right)=a(\Theta\,\overline{\wedge}\, R(\hat{y}))$$
The proof that (\ref{eqn:mult-odd-2}) and (\ref{eqn:mult-odd-3}) are also well-defined and still satisfy (\ref{eqn:aRtimes}) is entirely analogous.
\end{proof}

It is clear that the extended product is bilinear, unital, and graded commutative (cf. (\ref{eqn:mult-odd-3})) in all degrees. For associativity we need a compatibility of multiplication and $\int_{S^1}$:

\begin{proposition}\label{prop:int-mult-comp}
For any $\hat{x}\in \hat{E}^n(S^1\times M)$ and $\hat{y}\in\hat{E}^m(N)$ we have
\begin{equation}\label{eqn:int-mult}
\int_{S^1} (\hat{x}\times \hat{y})=\left(\int_{S^1} \hat{x}\right)\times \hat{y},\hspace{1cm}
(-1)^{|\hat{y}|}\int_{S^1} (\hat{y}\times\hat{x})=\hat{y}\times\left(\int_{S^1} \hat{x}\right)
\end{equation}
\end{proposition}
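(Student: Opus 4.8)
The plan is to prove the first identity in (\ref{eqn:int-mult}) directly from the definitions of $\int_{S^1}$ and $\times$, treating the even and odd cases of $n$ (and of $m$, for the product) separately, and then to deduce the second identity from the first by inserting a twist. The key observation is that the absolute integration map $\int_{S^1}$ on $\hat{E}^{n+1}(S^1\times M)$ was defined (Proposition \ref{prop:abs-int-welldef}) by first subtracting $pr_2^*i^*$ to land in the image of $j^*$ from the pair theory, and then using the pair integration map, which has the \emph{explicit} formula $[c,\omega,h]\mapsto [\int_{S^1}c,\int_{S^1}\omega,-\int_{S^1}h]$. So everything reduces to three compatibilities on the level of cochains, maps, and forms: that $\int_{S^1}$ (integration of cochains along $S^1$) commutes appropriately with the cup product and the Eilenberg--Zilber chain homotopy $B$, that it commutes with $\mu_{n,m}\circ(c_1\times c_2)$ at the level of representing maps, and that $\int_{S^1}pr_2^*i^*=0$ both for the product representatives and termwise.

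First I would treat the case where $\hat{x}\in\hat{E}^n(S^1\times M)$ with $n$ even and $\hat{y}\in\hat{E}^m(N)$ with $m$ even. Here the product $\hat{x}\times\hat{y}$ is given by the explicit formula displayed just before Definition \ref{def:mult-ext}, and $\int_{S^1}$ on the result is computed via the pair theory. The differential-form component is immediate: $\int_{S^1}(\omega_1\,\overline{\wedge}\,\omega_2)=(\int_{S^1}\omega_1)\,\overline{\wedge}\,\omega_2$ since $\hat{y}$ lives on $N$ (no $S^1$-dependence), and likewise $\int_{S^1}(pr_2^*i^*(-))=0$ for forms. For the map component one uses that adjunction and the structure maps are natural, exactly as in the proof of Proposition \ref{prop:integration-natural}, together with the fact that $\mu_{n,m}$ is a map of spectra so $\int_{S^1}(\mu_{n,m}(c_1\times c_2))^{adj}$ relates correctly to $\mu_{n-1,m}$ applied to $(\int_{S^1}c_1)\times c_2$ (this is where one may need to reselect $\mu$'s and a homotopy, invoking Proposition \ref{prop:homotopy} to absorb the discrepancy into the $h$-component, just as in the proof that integration on pairs is linear). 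For the $h$-component one needs: $\int_{S^1}(h_1\times\omega_2)=(\int_{S^1}h_1)\times\omega_2$ and $\int_{S^1}(\omega_1\times h_2)$ handled similarly, $\int_{S^1}(h_1\times\delta h_2)$ likewise, $\int_{S^1}B(\omega_1\otimes\omega_2)=B((\int_{S^1}\omega_1)\otimes\omega_2)$ up to a coboundary (acyclic models again), and $\int_{S^1}(c_1\times c_2)^*M_{n,m}$ matched with $(\int_{S^1}c_1\times c_2)^*M_{n-1,m}$ using the defining relation (\ref{choice-of-An-1})-style extension of the $M$'s to odd indices — here this means $M_{n-1,m}=-\int_{S^1}(\varphi_{n-1}\wedge\mathrm{id})^*M_{n,m}$, and the mismatch is a coboundary because both sides are cocycles, by Assumption \ref{as:rat-even}. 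Wait — one subtlety: the $M_{n,m}$ were only defined for $n,m$ even, so when $n$ is odd I instead compute $\hat{x}\times\hat{y}$ using Definition \ref{def:mult-ext}, i.e. $\hat{x}\times\hat{y}=\int_{S^1}(\hat{X}\times\hat{y})$ for a lift $\hat{X}$ with $\int_{S^1}\hat{X}=\hat{x}$; then $\int_{S^1}(\hat{x}\times\hat{y})=\int_{S^1}\int_{S^1}(\hat{X}\times\hat{y})$, and I would choose the lift of $\hat{x}\in\hat{E}^n(S^1\times M)$ over a \emph{second} $S^1$, apply Proposition \ref{prop:mult-integral} to interchange the two integrations with a sign, and reduce to the even case already handled — the signs conspiring correctly because both $S^1$-integrations contribute.

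The main obstacle I expect is the bookkeeping in the map component in the odd case, specifically keeping track of which copy of $S^1$ one integrates over and in which order, so that the sign from Proposition \ref{prop:mult-integral} matches the sign implicit in the definition (\ref{eqn:mult-odd-3}) and in the formula $-\int_{S^1}h$ built into the pair integration map. A clean way to organize this is to prove the identity first for $\hat{x},\hat{y}$ both pulled back from pair classes (where the explicit triple formula applies verbatim), then extend to absolute classes using $\hat{x}-pr_2^*i^*\hat{x}=j^*\hat{y}$ and the fact that $\int_{S^1}pr_2^*=0$, and only then push through the odd degrees via Definition \ref{def:mult-ext} and Proposition \ref{prop:mult-integral}. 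Finally, the second identity in (\ref{eqn:int-mult}) follows formally: by graded commutativity $\hat{y}\times\hat{x}=twist^*(\hat{x}\times\hat{y})$ up to the Koszul sign $(-1)^{|\hat{x}||\hat{y}|}$, and applying $\int_{S^1}$ and using naturality of $\int_{S^1}$ under the twist (which only permutes the $M$-factor, not the $S^1$) together with the first identity yields the stated sign $(-1)^{|\hat{y}|}$ after the bookkeeping of signs — the extra sign arising because the $S^1$-coordinate must be commuted past $\hat{y}$.
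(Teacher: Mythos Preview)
Your proposal misidentifies where the explicit cochain computation belongs, and this creates a genuine gap. In the case $n,m$ both even, you want to compare $\int_{S^1}(\hat{x}\times\hat{y})$ with $(\int_{S^1}\hat{x})\times\hat{y}$ by unfolding both into triples involving $B$, the $M$'s, etc. But $\int_{S^1}\hat{x}\in\hat{E}^{n-1}(M)$ has \emph{odd} degree, so the right-hand side is \emph{not} given by the explicit even-degree formula at all: by Definition~\ref{def:mult-ext}~(\ref{eqn:mult-odd-1}) it equals $\int_{S^1}(\hat{X}\times\hat{y})$ for any lift $\hat{X}$ of $\int_{S^1}\hat{x}$, and $\hat{x}$ itself is such a lift. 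So the $n,m$ even case is literally the definition --- there is nothing to prove and no $M_{n-1,m}$ to invoke. Your attempt to ``match $\int_{S^1}(c_1\times c_2)^*M_{n,m}$ with $(\int_{S^1}c_1\times c_2)^*M_{n-1,m}$'' is aimed at a nonexistent target.

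The explicit computation you sketch is exactly the right kind of argument, but it is needed one level up: for $\hat{z}\in\hat{E}^n(S^1\times S^1\times M)$ with $n$ even one must show
\[
\int_{S^1}\int_{S^1}(\hat{z}\times\hat{w})=\Big(\int_{S^1}\int_{S^1}\hat{z}\Big)\times\hat{w},
\]
which is the paper's Lemma~\ref{lem:techn2}. Here both sides land in even degree, so both use the explicit product formula, and your comparison of the $B$-terms (acyclic models), the $M$-terms (cocycle hence coboundary by Assumption~\ref{as:rat-even}), and the map components (a homotopy absorbed via Proposition~\ref{prop:homotopy}) goes through. Once this double-integral identity is in hand, the $n$ odd case of the Proposition follows exactly as you say: lift $\hat{x}$ to $\hat{X}$, write $\int_{S^1}(\hat{x}\times\hat{y})=\int_{S^1}\int_{S^1}(\hat{X}\times\hat{y})$, and apply the lemma. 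Your ``reduce to the even case already handled'' is otherwise circular, since the even case was tautological and contains no information to reduce to. The remaining parity cases then use Proposition~\ref{prop:mult-integral} and (\ref{eqn:mult-odd-3}) as you anticipated, and the second identity does follow from graded commutativity.
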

\begin{proof}
For $n,m$ both even, this is just (\ref{eqn:mult-odd-1}). For $n$ odd and $m$ even we have,
using Lemma \ref{lem:techn2}, which we will prove independently in a moment,
$$ \int_{S^1} (\hat{x}\times \hat{y})\overset{(\ref{eqn:mult-odd-1})}{=}\int_{S^1}\int_{S^1} (\hat{X}\times\hat{y})\overset{(\ref{eqn:double-int-product})}{=}\left(\int_{S^1}\int_{S^1} \hat{X}\right)\times\hat{y} = \left(\int_{S^1} \hat{x}\right)\times \hat{y}$$
For $n$ even and $m$ odd, we have by Proposition \ref{prop:mult-integral}
\begin{align*}
 \int_{S^1}(\hat{x}\times\hat{y})&\overset{(\ref{eqn:mult-odd-2})}{=}\int_{S^1}\int_{S^1}^{'}	 (\hat{x}\times\hat{Y})=-\int_{S^1}\int_{S^1} (\hat{x}\times\hat{Y})
 \overset{(\ref{eqn:mult-odd-1})}{=}-\int_{S^1}\left(\int_{S^1}\hat{x} \right)\times\hat{Y}
 \overset{(\ref{eqn:mult-odd-3})}{=} \left(\int_{S^1} \hat{x}\right)\times \hat{y}
\end{align*}
Finally, for $n,m$ both odd, by applying Proposition \ref{prop:mult-integral} twice,
\begin{align*}
\int_{S^1} (\hat{x}\times\hat{y})&=\int_{S^1}\int_{S^1}\int_{S^1}^{'}(\hat{X}\times\hat{Y})
\overset{(\ref{eqn:mult-integral})}{=}\int_{S^1}\int_{S^1}\int_{S^1}(\hat{X}\times\hat{Y})\overset{(\ref{eqn:double-int-product})}{=}\int_{S^1} \left(\int_{S^1}\int_{S^1} \hat{X}\right)\times \hat{Y}\\
&\overset{(\ref{eqn:mult-odd-2})}{=}\left(\int_{S^1}\int_{S^1} \hat{X}\right)\times \hat{y}=\left(\int_{S^1} \hat{x}\right)\times \hat{y}
\end{align*}
The second asserted equation is then immediate, by graded commutativity.
\end{proof}

It is now easy to verify associativity case by case, using the already proven associativity if all indices are even.

\begin{lemma}\label{lem:techn2}
Suppose $n,m$ are even. For $\hat{z}\in\hat{E}^{n}(S^1\times S^1\times M)$ and $\hat{w}\in \hat{E}^m(N)$ we then have
\begin{equation}
\int_{S^1}\int_{S^1}(\hat{z}\times \hat{w})=\left(\int_{S^1}\int_{S^1} \hat{z}\right)\times\hat{w}\label{eqn:double-int-product}
\end{equation}
\end{lemma}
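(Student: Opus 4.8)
The plan is to reduce $\hat{z}$ to a convenient cocycle representative and then compute both sides of (\ref{eqn:double-int-product}) explicitly. Exactly as in the proof of Proposition \ref{prop:mult-integral}, write $\hat{z}-pr_{23}^*i_{23}^*\hat{z}=j_{23}^*\hat{x}$ for a relative class $\hat{x}$ and set $\hat{y}={pr'}$-reduced, i.e. $\hat{y}=\hat{x}-{pr'}_{13}^*{i'}_{13}^*\hat{x}$; using $\int_{S^1}pr^*=0$ (Proposition \ref{prop:abs-int-welldef}) and naturality of integration (Proposition \ref{prop:integration-natural}), the subtracted terms contribute nothing to $\int_{S^1}\int_{S^1}\hat{z}$ \emph{nor} to $\int_{S^1}\int_{S^1}(\hat{z}\times\hat{w})$, so both sides of the claimed identity are unchanged if we replace $\hat{z}$ by the absolute class underlying $\hat{y}$. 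By Lemma \ref{lem:techn-int}, $\hat{y}$ is represented by a triple $(c,\omega,h)$ on $S^1\times S^1\times M$ with $c|_{(S^1\vee S^1)\times M}=const_{pt}$, $i_{13}^*\omega=i_{23}^*\omega=0$ and $h|_{(S^1\vee S^1)\times M}=0$; for such a representative $c,\omega,h$ all restrict trivially whenever either circle coordinate equals the basepoint, so $\int_{S^1}\int_{S^1}\hat{z}=\big[\int_{S^1}\int_{S^1}c,\ \int_{S^1}\int_{S^1}\omega,\ \int_{S^1}\int_{S^1}h\big]$, the iterated integrals being applied directly to the components.

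Next I would observe that $\hat{z}\times\hat{w}$ is again of this convenient form on $S^1\times S^1\times(M\times N)$: in the explicit even-degree product formula the underlying map $\mu_{n,m}\circ(c\times c_w)$ is constant on $(S^1\vee S^1)\times M\times N$ (since $\mu_{n,m}$ is pointed and $c$ is basepointed there), the form $\omega\,\overline{\wedge}\,\omega_w$ and the cochains $B(\omega\otimes\omega_w)$, $h\times\omega_w$, $(-1)^{n}\omega\times h_w$, $h\times\delta h_w$ all restrict to zero, and $(c\times c_w)^*M_{n,m}$ restricts to zero because $M_{n,m}$ is a \emph{reduced} cochain on $E_n\wedge E_m$. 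Hence $\int_{S^1}\int_{S^1}(\hat{z}\times\hat{w})$ is computed directly on this representative, while $\big(\int_{S^1}\int_{S^1}\hat{z}\big)\times\hat{w}$ is the explicit even-degree product of $\big[\int_{S^1}\int_{S^1}c,\int_{S^1}\int_{S^1}\omega,\int_{S^1}\int_{S^1}h\big]$ with $[c_w,\omega_w,h_w]$. It then remains to compare the three components.

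The map components agree up to a homotopy $\Phi$: iterating the (homotopy-level) compatibility of the spectrum multiplication with the structure maps $\varepsilon$ gives $\int_{S^1}\int_{S^1}\big(\mu_{n,m}\circ(c\times c_w)\big)\simeq\mu_{n-2,m}\circ\big((\int_{S^1}\int_{S^1}c)\times c_w\big)$; fix such a $\Phi$, so that by Proposition \ref{prop:homotopy} it suffices to match the cochain components after subtracting $\int_I\Phi^*\iota_{n+m-2}$. The form components agree by the Fubini identity $\int_{S^1}\int_{S^1}(\omega\,\overline{\wedge}\,\omega_w)=(\int_{S^1}\int_{S^1}\omega)\,\overline{\wedge}\,\omega_w$, valid because $\omega_w$ does not involve the circle variables. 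For the cochain components one combines: the pullback formula (\ref{eqn:pullback-formula}) and a Leibniz-type compatibility of $\int_{S^1}$ with cross products, to rewrite $\int_{S^1}\int_{S^1}(h\times\omega_w)$, $\int_{S^1}\int_{S^1}(\omega\times h_w)$, $\int_{S^1}\int_{S^1}(h\times\delta h_w)$ in terms of $(\int\int h)\times\omega_w$, $(\int\int\omega)\times h_w$, $(\int\int h)\times\delta h_w$ up to the Koszul signs produced by integrating from the left; naturality of the Acyclic Models homotopy $B$, to identify $\int_{S^1}\int_{S^1}B(\omega\otimes\omega_w)$ with $B\big((\int\int\omega)\otimes\omega_w\big)$ modulo a coboundary; and, finally, rational evenness for the $M$-terms --- since any two choices of $M_{n-2,m}$ differ by a coboundary, it suffices to verify that the universal cochain on $E_{n-2}\times E_m$ governing the discrepancy of $\int_{S^1}\int_{S^1}(c\times c_w)^*M_{n,m}$, of $(\int\int c\times c_w)^*M_{n-2,m}$ and of the $\Phi$-correction is a \emph{cocycle}, a routine computation from (\ref{eqn:choice-fund-cocycle}) and (\ref{eqn:coboundary-integral-formula}); it has odd degree on a product of spaces of the rationally even spectrum $E$, so being a cocycle it is automatically a coboundary. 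Lemma \ref{lem:cob-trans} then completes the proof.

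The main obstacle is this last, cochain-level comparison: bookkeeping the Koszul signs introduced by integrating ``from the left'', establishing the compatibility of $B$ with iterated integration by an Acyclic Models argument, and assembling all the $M$-contributions and homotopy-corrections into a single cocycle condition. Nothing here is conceptually deep, but it is precisely the sort of lengthy verification the author defers to \cite{meine_promotion}.
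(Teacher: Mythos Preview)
Your proposal is correct and follows essentially the same route as the paper: reduce $\hat{z}$ via Lemma~\ref{lem:techn-int} to a cocycle representative that may be integrated directly, compute both sides of (\ref{eqn:double-int-product}) explicitly, compare the $B$-terms by an Acyclic Models argument, the map components by a spectrum-level homotopy between $\mu_{n,m}\circ(\varepsilon\circ\Sigma\varepsilon\times id)$ and $\varepsilon\circ\Sigma\varepsilon\circ\Sigma^2\mu_{n-2,m}$, and the $M$-terms by verifying that the universal discrepancy cochain on $E_{n-2}\times E_m$ is a cocycle of odd degree, hence a coboundary by rational evenness. Your additional remark that the product representative is again of the ``convenient'' form (because $M_{n,m}$ is reduced) makes explicit a point the paper leaves implicit.
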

\begin{proof}
The proof is similar to that of
Proposition \ref{prop:mult-integral} and we will use the notation from there. Represent $\hat{y}$ by a triple $(c_0,\omega_0,h_0)$ as in Lemma \ref{lem:techn-int} that may be integrated directly. Then
for all $\hat{w}=[c_1,\omega_1,h_1]$
\begin{align*}
\int_{S^1}\int_{S^1} (\hat{z}\times \hat{w})=\int_{S^1}\int_{S^1} &(\hat{y}\times\hat{w})
= \Big[ \int_{S^1}\int_{S^1}\mu_{n,m}(c_0\times c_1), \int_{S^1}\int_{S^1}\omega_0\overline{\wedge}\omega_1,\\
&\int_{S^1}\int_{S^1} (B(\omega,\omega_1)+\omega_0\times h_1 + h_0\times \omega_1 - h_0\times \delta h_1 + (c_0\times c_1)^* M_{n,m}  )
   \Big]
\end{align*}
while on the other hand
\begin{align*}
&(  \int_{S^1}\int_{S^1} \hat{z} ) \times \hat{w}=(  \int_{S^1}\int_{S^1} \hat{y}  )\times \hat{w}=\Big[\mu_{n-2,m}\circ ( \int_{S^1}\int_{S^1} c_0\times c_1 ), (\int_{S^1}\int_{S^1} \omega_0)\overline{\wedge} \omega_1,\\
&B( \int_{S^1}\int_{S^1}\omega_0, \omega_1 ) + (\int_{S^1}\int_{S^1}\omega_0)\times h_1 + (\int_{S^1}\int_{S^1} h_0)\times \omega_1 - (\int_{S^1}\int_{S^1} h_0)\times \delta h_1 + (\int_{S^1}\int_{S^1} c_0\times c_1)^*M_{n-2,m} \Big]
\end{align*}
The natural chain homotopies $\int_{S^1}\int_{S^1} B(-,-), B(\int_{S^1}\int_{S^1} -, -)$ are chain homotopic by Acyclic Models. They therefore differ on closed forms only by coboundaries. Select a homotopy $H: I \times \Sigma^2 E_{n-2}\times E_m\rightarrow E_{n+m}$ from
$\mu_{n,m}\circ (\varepsilon_{n-1}\circ \Sigma\varepsilon_{n-2}\times id_m)$ to $\varepsilon_{n+m-1}\circ \Sigma \varepsilon_{n+m-2}\circ \Sigma^2 \mu_{n-2,m}$. It suffices to show that the pullback under $(\int_{S^1}\int_{S^1} c_0) \times c_1$ of
$$ \int_{S^1}\int_{S^1} (\varepsilon_{n-1}\circ\Sigma \varepsilon_{n-2}\times id_m)^* M_{n,m}-\int_I H^*((\Omega \varepsilon_{n+m-1}^{adj})^{-1})^*((\varepsilon_{n+m-2}^{adj})^{-1})^*\iota_{n+m-2}-M_{n-2,m} $$
is a coboundary. As usual, applying $\delta$ yields zero and the result follows. For more details, especially on the use of Acyclic Models here, we refer to \cite{meine_promotion}.
\end{proof}

We close with our main result:

\begin{theorem}\label{thm:main}
Let $E$ be a rationally even, multiplicative cohomology theory. Then there exists a differential extension $\hat{E}^*$ to a multiplicative differential cohomology theory in the sense of \cite{bunke_schick_uniqueness} which is a contravariant functor to the category of unital graded commutative rings. Moreover, the product structure is compatible with integration (Proposition \ref{prop:int-mult-comp}).
\end{theorem}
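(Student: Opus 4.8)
The plan is to assemble the theorem from the pieces built up in Sections 2--4, taking $N=\emptyset$ throughout. First I would record the underlying differential extension: with the empty submanifold, Theorem \ref{thm:axiom-verific} yields the exact sequence $E^{n-1}(M)\xrightarrow{ch}\Omega^{n-1}(M;V)/\im(d)\xrightarrow{a}\hat E^n(M)\xrightarrow{I}E^n(M)\to 0$, the relations $ch\circ I=can\circ R$ and $R\circ a=d$ established there give the commuting square, and the Abelian group structure of Section \ref{ssec:abelian-group} makes $\hat E^n(M)$ a group on which $a$, $R$, $I$ are homomorphisms. Hence $(\hat E^*,a,R,I)$ is a differential extension of $E$ in the sense of the introduction, and there is nothing further to check at this level.

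Next I would install the ring structure. In even degrees Theorem \ref{thm:products-even-degrees} already supplies a bilinear, associative, graded commutative product with unit $\hat 1_S$ from \eqref{eqn:unit}. Definition \ref{def:mult-ext} extends this product to all degrees, and the proposition following that definition shows the extension is well defined, bilinear, unital, graded commutative, and still satisfies the external formula \eqref{eqn:aRtimes}. The one property left to verify is associativity in the mixed and all-odd cases; here I would argue parity case by parity case, writing each odd-degree class as $\int_{S^1}$ of an even-degree class on $S^1\times(-)$ via Lemma \ref{lem:extension-odd}, pushing as many of the three factors onto the circle as needed, and then using compatibility of the product with integration (Proposition \ref{prop:int-mult-comp}) together with Lemma \ref{lem:techn2} to collapse everything to the already proven even-degree associativity. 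Functoriality to rings then follows from Proposition \ref{prop:pullback-mult} in even degrees and, for odd degrees, from naturality of integration (Proposition \ref{prop:integration-natural}) applied to the defining formulas of Definition \ref{def:mult-ext}; so $\hat E^*$ is a contravariant functor to unital graded commutative rings.

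Then I would check that $I$ and $R$ are unital ring homomorphisms. By the explicit formula for the even-degree product, $R[\mu_{n,m}(c_0,c_1),\omega_0\wedge\omega_1,\ldots]=\omega_0\wedge\omega_1$ and $I[\mu_{n,m}(c_0,c_1),\ldots]=[\mu_{n,m}\circ(c_0,c_1)]$, so both are multiplicative in even degrees; since $R(\hat 1_S)=\pi_S^*\omega_{pt}$ is the unit of $\Omega^*(S;V)$ and $I(\hat 1_S)=1_S$, both are unital. To pass to odd degrees I would use $R\circ\int_{S^1}=\int_{S^1}\circ R$ and $I\circ\int_{S^1}=\int_{S^1}\circ I$ from Proposition \ref{prop:integral-RIa-compatible}, together with the facts that the corresponding integration maps on $\Omega^*$ and on $E^*$ are themselves compatible with products (the latter because $ch$, and hence the suspension isomorphism, is multiplicative). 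For the remaining axiom $a(\Theta)\cup\hat x=a(\Theta\wedge R(\hat x))$, the even-degree case is the proposition establishing \eqref{eqn:partial-a-mult-compatible}, and the general case follows by pulling the external identity \eqref{eqn:aRtimes} back along the diagonal, again splitting by parity and invoking Proposition \ref{prop:int-mult-comp} when one factor is odd. Compatibility of the product with integration is then exactly Proposition \ref{prop:int-mult-comp}, which completes the statement.

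The hard part has really been done already: the delicate points were choosing the fundamental cocycles compatibly with the $S^1$-integration \eqref{eqn:choice-fund-cocycle}, choosing the coherence cochains $M_{n,m}$ in even degrees and pushing them through the associativity hexagon of Theorem \ref{thm:products-even-degrees}, and setting up the canonical Abelian group structure. Within the proof of the present theorem the only genuinely fiddly step is the odd-degree associativity reduction, where one must match the sign produced by interchanging the two circle integrations (Proposition \ref{prop:mult-integral}) against the Koszul sign in graded commutativity; the well-definedness of the odd-degree extension and Lemma \ref{lem:techn2} are precisely what force every such case to collapse onto the even one, so no new coherence verification is required.
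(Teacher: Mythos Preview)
Your proposal is correct and follows the same approach as the paper. The paper's own proof is a single sentence: since pullback is compatible with integration (Proposition~\ref{prop:integration-natural}), the even-degree multiplicativity of $f^*$ from Proposition~\ref{prop:pullback-mult} extends to odd degrees via Definition~\ref{def:mult-ext}; everything else (the ring axioms, the axiom \eqref{eqn:aRtimes}, associativity by cases, compatibility with integration) is treated as already established in the text preceding the theorem, so your write-up simply makes explicit what the paper leaves implicit.
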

\begin{proof}
Since pullback $f^*$ along smooth maps $f: M\rightarrow N$ is compatible with integration (Proposition \ref{prop:integration-natural}) it follows from Proposition \ref{prop:pullback-mult} that $f^*$ preserves the product also in odd degrees.
\end{proof}

\newpage

\addcontentsline{toc}{section}{References}
\bibliography{references}
\nocite{*}

\bibliographystyle{amsalpha}

\end{document}